\documentclass[twoside, a4paper, reqno, 11pt]{amsart} 
\usepackage{microtype} 
\microtypesetup{protrusion=true, expansion=true}
\usepackage{amsmath, amsfonts, amsthm, amssymb, verbatim, xcolor, multirow, booktabs, mathdots, bm, amscd, latexsym, mathrsfs, comment, mathtools}
\usepackage[pdftex, 
            hidelinks,
            pdfencoding=auto,
            psdextra]{hyperref}
\hypersetup{
    colorlinks=true,
    linkcolor=blue, 
    citecolor=blue,
    urlcolor=blue,
    hypertexnames=false,
    pdftitle={Orthogonality of Defect Ideals for Jordan Homomorphisms},
    pdfauthor={Cao Minh Nam} 
}

\usepackage[T1]{fontenc}
\usepackage{lmodern}

\usepackage[shortlabels]{enumitem}
\usepackage{aliascnt}
\usepackage[capitalize]{cleveref}
\usepackage{xspace}
\crefname{equation}{}{}

\usepackage{empheq}
\usepackage{soul}

\theoremstyle{plain}

\newtheorem{theorem}{Theorem}[section]
\newaliascnt{proposition}{theorem}
\newtheorem{proposition}[proposition]{Proposition}
\aliascntresetthe{proposition}
\newaliascnt{lemma}{theorem}
\newtheorem{lemma}[lemma]{Lemma}
\aliascntresetthe{lemma}
\newaliascnt{corollary}{theorem}
\newtheorem{corollary}[corollary]{Corollary}
\aliascntresetthe{corollary}

\newtheorem*{maintheoremA}{Theorem A}
\newtheorem*{maintheoremB}{Theorem B}
\newtheorem*{maintheoremC}{Theorem C}
\newcommand{\thmAref}{\hyperlink{thmA}{Theorem~A}\xspace}
\newcommand{\thmBref}{\hyperlink{thmB}{Theorem~B}\xspace}
\newcommand{\thmCref}{\hyperlink{thmC}{Theorem~C}\xspace}

\theoremstyle{remark}
\newtheorem{remark}{Remark}

\crefname{theorem}{theorem}{theorems}
\Crefname{theorem}{Theorem}{Theorems}

\crefname{proposition}{proposition}{propositions}
\Crefname{proposition}{Proposition}{Propositions}

\crefname{lemma}{lemma}{lemmas}
\Crefname{lemma}{Lemma}{Lemmas}

\crefname{corollary}{corollary}{corollaries}
\Crefname{corollary}{Corollary}{Corollaries}

\newcommand{\Alg}{\operatorname{Alg}}
\newcommand{\Ann}{\operatorname{Ann}}

\title[Orthogonality of Defect Ideals]
{Orthogonality of Defect Ideals for Jordan Homomorphisms}

\author{Cao Minh Nam}

\email{caominhnam.dhsp@gmail.com}

\begin{document}

\begin{abstract}
Let $\varphi\colon A\to B$ be a Jordan homomorphism of associative algebras 
over a field $\mathbb{F}$ of characteristic different from $2$, with 
defect ideals $V_{\varphi}$ and $W_{\varphi}$. If
$A=\Alg([[A,A],[A,A]]),$
then $V_{\varphi}W_{\varphi}=W_{\varphi}V_{\varphi}=0$ and
$V_{\varphi}\cap W_{\varphi}\subseteq \Ann(\Alg(\varphi(A))).$
Moreover, the induced map modulo $V_{\varphi}\cap W_{\varphi}$ is standard. 
The annihilator bound is sharp, even in finite dimension. As an application, when $A$ is unital and $A=K(A)$, every Jordan homomorphism defined on $A$ is splittable and standard. In this setting, the corresponding result of Bre\v{s}ar and Zelmanov holds without the surjectivity hypothesis.
\end{abstract}

\maketitle

\section{Introduction}

Throughout this paper $\mathbb{F}$ denotes a field of characteristic different from~$2$. Unless explicitly stated otherwise, all algebras under consideration, in particular $A$ and $B$, are associative $\mathbb{F}$-algebras possibly without identity and algebra homomorphisms are not assumed to preserve the identity. We use the convention $x\circ y=xy+yx$ for the Jordan product, as in \cite{BresarZelmanov2026}. Over fields of characteristic different from $2$, this differs only by a scalar factor from the normalized convention $x\circ y=\frac12(xy+yx)$ as in \cite{McCrimmon2004}, for example.

An $\mathbb{F}$-linear map $\varphi\colon A\to B$ is called a \emph{Jordan homomorphism} if
$$\varphi(x\circ y)=\varphi(x)\circ\varphi(y) \text{ for all } x,y\in A.$$
Thus, a Jordan homomorphism preserves only the symmetrized product and need not respect the associative multiplication in either order. A classical structural problem is to determine when the Jordan identity implies that a Jordan homomorphism decomposes as the sum of a homomorphism and an antihomomorphism. The main result of this paper gives an ideal-theoretic separation: if $A=\Alg([[A,A],[A,A]])$, then the two defect ideals measuring the failures of multiplicativity and antimultiplicativity annihilate each other.

Classical contributions to this structural problem are due to Jacobson and Rickart \cite{JacobsonRickart1950}, Herstein \cite{Herstein1956}, and Smiley \cite{Smiley1957}. Further developments are surveyed by Bre\v{s}ar and Zelmanov in \cite{BresarZelmanovSurvey2025}. We now fix the notation needed to formulate it precisely. For $x,y\in A$, write $[x,y]=xy-yx$. If $U,V\subseteq A$ are $\mathbb{F}$-subspaces, let $[U,V]$ denote the $\mathbb{F}$-span of the commutators $[x,y]$ with $x \in U$ and $y \in V$, and let $U^2$ be the $\mathbb{F}$-span of the products $xx'$ with $x,x'\in U$. For a subset $L \subseteq A$, write
$\Alg(L)=\operatorname{span}_{\mathbb{F}}\{x_1\cdots x_n \mid n\ge1, x_1,\ldots, x_n\in L\}$
for the $\mathbb{F}$-subalgebra generated by $L$, with the convention that no identity element is adjoined unless it already lies in this subalgebra. Denote by $K(A)$ the two-sided ideal generated by $[A,A]$ and by $\Ann(A)=\{x\in A\mid xA=Ax=0\}$ the two-sided annihilator.

In the terminology of \cite{BresarZelmanov2026}, a Jordan homomorphism $\varphi\colon A\to B$ is called \emph{standard} if
$\varphi=\varphi_++\varphi_-$, where $\varphi_+$ is a homomorphism and $\varphi_-$ is an antihomomorphism such that $\varphi_+(A)\varphi_-(A)=\varphi_-(A)\varphi_+(A)=0.$
A Jordan homomorphism $\varphi$ is said to be \emph{annihilator-by-standard} if the induced Jordan homomorphism
$A\to \Alg(\varphi(A))/\Ann(\Alg(\varphi(A)))$
is standard. We measure the failure of multiplicativity and antimultiplicativity by the bilinear maps
$$
\begin{aligned}
a(x,y)&=\varphi(xy)-\varphi(x)\varphi(y),\\
b(x,y)&=\varphi(xy)-\varphi(y)\varphi(x).
\end{aligned}
$$
Let $V_{\varphi}$ and $W_{\varphi}$ be the two-sided ideals of $\Alg(\varphi(A))$ generated by the values of $a$ and $b$, respectively. According to \cite[Definition~3.1]{Bresar2025}, the Jordan homomorphism $\varphi$ is \emph{splittable} if
$V_{\varphi}\cap W_{\varphi}=0$.~By \cite[Theorem~3.13]{Bresar2025}, every splittable Jordan homomorphism is the sum of a homomorphism and an antihomomorphism on the commutator ideal $K(A)$. Hence the pair of defect ideals and especially their intersection is a natural invariant of the map.

Another direction of research concerns algebras generated by double commutators. Under the assumption $A=\Alg([[A,A],[A,A]])$, Bre\v{s}ar \cite[Theorem~3.1]{Bresar2022} showed that Jordan epimorphisms onto semiprime algebras are standard. For unital algebras satisfying $A=K(A)$, Bre\v{s}ar and Zelmanov first showed in \cite[Proposition~2.5]{BresarZelmanov2026} that every Jordan homomorphism is nearly-standard in their terminology, and then proved in \cite[Theorem~2.10]{BresarZelmanov2026} standardness under the additional assumption of surjectivity. Whether surjectivity is necessary was left open. Replacing the codomain $B$ by the generated subalgebra $\Alg(\varphi(A))$ does not resolve the issue, since $\varphi(A)$ need not be multiplicatively closed and hence $\varphi$ need not be surjective onto $\Alg(\varphi(A))$.

The basic theory also provides, for example, pointwise identities \cite[(2.6)--(2.7)]{BresarZelmanov2026} controlling individual multiplicative and antimultiplicative defects. The new step is to upgrade, under generation by double commutators, this elementwise control to orthogonality of the two-sided ideals generated by all defect values. Our three principal results are stated next and their proofs are given in subsequent sections.

\hypertarget{thmA}{}\begin{maintheoremA}
Let $\varphi\colon A\to B$ be a Jordan homomorphism and assume that
$A=\Alg([[A,A],[A,A]])$. Then the defect ideals are orthogonal,
$$V_{\varphi}W_{\varphi}=W_{\varphi}V_{\varphi}=0.$$
Moreover,
$V_{\varphi}+W_{\varphi}=\Alg(\varphi(A)),$
and in particular,
$$V_{\varphi}\cap W_{\varphi}\subseteq\Ann(\Alg(\varphi(A))).$$
\end{maintheoremA}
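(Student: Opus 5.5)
The plan has three steps: show $V_{\varphi}+W_{\varphi}=\Alg(\varphi(A))$, deduce the annihilator inclusion from orthogonality, and then prove orthogonality itself, which is the main obstacle.

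\textbf{Step 1: elementary identities for $a$ and $b$.} Polarizing the Jordan identity gives $a(x,y)+a(y,x)=0$ and $b(x,y)+b(y,x)=0$. Hence
$$\varphi([x,y])=a(x,y)+b(x,y),\qquad [\varphi(x),\varphi(y)]=b(x,y)-a(x,y).$$
So $\varphi$ of any commutator lies in $V_{\varphi}+W_{\varphi}$, and so does the commutator of any two elements of $\varphi(A)$. Standard consequences of the Jordan identity will also be available, namely $\varphi(xyx)=\varphi(x)\varphi(y)\varphi(x)$ and its linearization $\varphi(xyz+zyx)=\varphi(x)\varphi(y)\varphi(z)+\varphi(z)\varphi(y)\varphi(x)$.

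\textbf{Step 2: $V_{\varphi}+W_{\varphi}=\Alg(\varphi(A))$.} We have $\varphi([c,d])\in V_{\varphi}+W_{\varphi}$ for $c=[x,y]$, $d=[z,w]$. Since $\varphi(uv)=\varphi(u)\varphi(v)+a(u,v)$ and $V_{\varphi}+W_{\varphi}$ is an ideal of $\Alg(\varphi(A))$, induction on the length of products of double commutators gives $\varphi(A)\subseteq V_{\varphi}+W_{\varphi}$. This uses $A=\Alg([[A,A],[A,A]])$, and therefore $\Alg(\varphi(A))=V_{\varphi}+W_{\varphi}$.

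\textbf{Step 3: the annihilator inclusion, assuming orthogonality.} Suppose $V_{\varphi}W_{\varphi}=W_{\varphi}V_{\varphi}=0$. Take $t\in V_{\varphi}\cap W_{\varphi}$ and write an arbitrary element of $\Alg(\varphi(A))$ as $v+w$ with $v\in V_{\varphi}$, $w\in W_{\varphi}$. Then
$$t(v+w)=tv+tw=0,$$
because $tv\in W_{\varphi}V_{\varphi}$ and $tw\in V_{\varphi}W_{\varphi}$. Symmetrically $(v+w)t=0$, so $t\in\Ann(\Alg(\varphi(A)))$.

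\textbf{Step 4: orthogonality, the main obstacle.} Since $V_{\varphi}$ and $W_{\varphi}$ are ideals generated by the values of $a$ and $b$, it suffices to prove
$$a(x,y)\,r\,b(z,w)=0\quad\text{and}\quad b(z,w)\,r\,a(x,y)=0$$
for $r$ in $\Alg(\varphi(A))$ with an adjoined $1$. By Step 2, $1$ and $\varphi(A)$ together generate everything, so by induction it is enough to treat $r=1$ and $r=\varphi(s)$. Moreover, $a$ and $b$ can be expanded along products, e.g. $a(uv,y)=a(u,vy)+\varphi(u)a(v,y)-a(u,v)\varphi(y)$ and similarly for $b$. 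Because $A$ is generated by double commutators, this should reduce the claim to arguments that are themselves commutators or double commutators.

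For those arguments I would start from the pointwise identities \cite[(2.6)--(2.7)]{BresarZelmanov2026}, which give $a(x,y)\varphi(r)b(x,y)=0$ and its mirror. Fully linearizing in $x,y$ gives a symmetrized relation among the four pairs. The diagonal cases are not yet the full mixed products, and the plan is to get rid of the symmetrization by substituting commutators $[c,d]$ for the variables. The identities $\varphi([c,d])=a+b$ and $[\varphi(c),\varphi(d)]=b-a$ should then let us trade a factor $\varphi$ for defects and split the symmetrized relation into the separate vanishing statements. The double-commutator hypothesis enters exactly here: one level of commutator converts the Jordan data into defect data, and the second level is what separates $a$-terms from $b$-terms.

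I expect this separation to be the hardest part of the argument.
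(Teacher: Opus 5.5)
Your Steps 1--3 are correct. The identities $\varphi([x,y])=a(x,y)+b(x,y)$ and $[\varphi(x),\varphi(y)]=b(x,y)-a(x,y)$ hold. Your induction over words in double commutators is a valid alternative to the paper's proof of $V_{\varphi}+W_{\varphi}=\Alg(\varphi(A))$; the paper instead notes that modulo $V_{\varphi}+W_{\varphi}$ the map is both a homomorphism and an antihomomorphism, so it kills $K(A)=A$. The annihilator inclusion then follows from orthogonality exactly as you say.

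The gap is Step~4, which carries the entire content of the theorem and is not actually proved. Your reduction there is purely formal. The expansions $\varphi(u)a(v,y)=a(uv,y)-a(u,vy)+a(u,v)\varphi(y)$ and $\varphi(s)b(z,w)=b(z,ws)-b(zw,s)+b(w,s)\varphi(z)$ hold for \emph{every} linear map. They do reduce $a(x,y)\,T\,b(z,w)=0$ to the case $T=1$, but only if that case is known for \emph{all} entries, and $a(x,y)b(z,w)=0$ for all $x,y,z,w$ is already equivalent to $V_{\varphi}W_{\varphi}=0$. For this base case you give no argument. Linearizing a pointwise identity of the form $a(x,y)\varphi(r)b(x,y)=0$ only yields a sum of four terms $a(\cdot,\cdot)\varphi(r)b(\cdot,\cdot)$, one for each way of distributing the two pairs. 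Nothing in the proposal explains how substituting commutators would split that sum. Moreover, even vanishing for commutator entries could not be propagated by your expansion: it replaces a pair $(d,y)$ with $d\in[A,A]$ by pairs such as $(zd,y)$ and $(z,dy)$, which contain no commutator.

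The missing idea is the step where $A=\Alg([[A,A],[A,A]])$ is genuinely used.
\begin{itemize}
\item The paper applies Bre\v{s}ar's cyclic map theorem \cite[Theorem~2.1]{Bresar2022} to the skew-symmetric maps $a$ and $b$, which satisfy $a(x^2,x)=b(x^2,x)=0$. This gives $a(xy,z)+a(zx,y)+a(yz,x)=0$ and the same for $b$.
\item These yield the relative Leibniz formulas $a(y,zx)=\varphi(x)a(y,z)+a(y,x)\varphi(z)$ and $b(y,zx)=\varphi(z)b(y,x)+b(y,z)\varphi(x)$. Unlike your expansions, these do not hold for arbitrary linear maps.
\item Comparing $a(w,(xy)z)$ with $a(w,x(yz))$, and likewise for $b$, produces cross relations making $P(x,y,z,w)=a(x,y)b(z,w)$ alternating. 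Together with the cyclic identity, $P$ satisfies $P([x,y],z,u,v)+P([z,x],y,u,v)+P([y,z],x,u,v)=0$.
\item A six-variable multilinear argument then gives $a(c,x)b(d,y)=0$ for $c,d\in[A,A]$.
\item The Leibniz formula in the form $\varphi(z)a(d,y)=a(d,yz)-a(d,z)\varphi(y)$ keeps the commutator entry $d$ fixed, so the sandwich induction closes on commutator entries.
\item Each defect ideal is generated by defect values having a commutator entry, so orthogonality follows.
\end{itemize}
Without the cyclic identities, or some substitute for them, your Step~4 does not go through.
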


The orthogonality assertion is a central structural conclusion of \thmAref. The additional equality $V_{\varphi}+W_{\varphi}=\Alg(\varphi(A))$ already holds under the weaker hypothesis $A=K(A)$. The proof of \thmAref is given in \Cref{sec:defect-identities}, while \thmBref below shows that its annihilator conclusion is sharp even in finite dimension.

\hypertarget{thmB}{}\begin{maintheoremB}
There exist finite-dimensional algebras $A$ and $B$ and a Jordan homomorphism $\varphi\colon A\to B$ such that the following assertions hold.
    \begin{enumerate}[label=\textup{(\arabic*)}]
    \item $A=\Alg \bigl([[A,A],[A,A]] \bigr).$

    \item $V_{\varphi}\cap W_{\varphi} = \Ann\bigl(\Alg(\varphi(A))\bigr)$,~and this common two-sided ideal is one-dimensional over $\mathbb{F}$.
    \end{enumerate}
\end{maintheoremB}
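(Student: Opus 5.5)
The plan is to build the example as a \emph{twisted} version of the standard map $x\mapsto(x,x)\in A\oplus A^{\mathrm{op}}$ with a one-dimensional correction. The added coordinate should be invisible to the Jordan product but detected by both defect ideals. Everything is finite-dimensional and explicit, so each property can be checked by direct computation; the real work is finding the right $A$ and the right correction.

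\textbf{Ansatz.} Take $B=A\oplus A^{\mathrm{op}}\oplus\mathbb{F}z$ with
$$(x,y,\lambda)(x',y',\lambda')=(xx',\;y'y,\;0),$$
so that $zB=Bz=0$. Set $\varphi(x)=(x,x,\tau(x))$ for a linear functional $\tau$ on $A$.
\begin{enumerate}[label=\textup{(\roman*)}]
\item $\varphi$ is a Jordan homomorphism exactly when $\tau(A\circ A)=0$.
\item The defects are
$$a(x,y)=\bigl(0,\,[x,y],\,\tau(xy)\bigr),\qquad b(x,y)=\bigl([x,y],\,0,\,\tau(xy)\bigr),$$
and $\tau(xy)=\tfrac12\tau([x,y])$ whenever $\tau$ kills $A\circ A$.
\item The goal is to show $z\in V_\varphi\cap W_\varphi$. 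This needs linear combinations of the $a$-values, and of products of $a$-values with elements of $\Alg(\varphi(A))$, whose first two components cancel while $\tau$ survives. The analogous statement is needed for $W_\varphi$.
\end{enumerate}
Two further checks then finish the example. By \thmAref, $V_\varphi\cap W_\varphi\subseteq\Ann(\Alg(\varphi(A)))$. For the reverse inclusion, one computes directly that $\Ann(\Alg(\varphi(A)))=\mathbb{F}z$, using that the projection of $\Alg(\varphi(A))$ onto $A\oplus A^{\mathrm{op}}$ has zero annihilator once $A$ has zero annihilator.

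\textbf{Choosing $A$.} The source algebra must satisfy three conditions at once:
\begin{itemize}
\item $A=\Alg([[A,A],[A,A]])$, which forces $A=A^2$ and excludes nilpotent $A$;
\item $A\circ A\neq A$, so that a nonzero $\tau$ exists;
\item $\tau$ is nonzero on $[A,A]$.
\end{itemize}
Unital algebras are excluded, since then $1\circ x=2x$. The same obstruction kills the attempt to replace $\tau$ by a cocycle twist on $M_2(\mathbb{F})$: such a twist is a coboundary, because $M_2(\mathbb{F})$ is separable, and changing coordinates shows that the intersection vanishes. So I would search among small non-unital extensions $A=S\ltimes M$. Here $S=M_2(\mathbb{F})$ or a non-unital subalgebra with $S=\Alg([[S,S],[S,S]])$, and $M$ is a small bimodule chosen so that $S\circ M$ misses part of $M\cdot M$ or of $[M,M]$. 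Concretely, take $M$ with a nonzero product $M\times M\to \mathbb{F}c$ that is antisymmetric, with $c$ annihilated by $S$. Then $c\in[A,A]$ but $c\notin A\circ A$. The functional $\tau$ is the coordinate of $c$.

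\textbf{Main obstacle.} The hard part is reconciling $A\circ A\neq A$ with generation by double commutators. The element $c$ must be a product of double commutators while staying outside $A\circ A$. The first thing to try is to make $M$ a bimodule on which $[[S,S],[S,S]]$ acts nontrivially, so that $M$ and $S$ lie in $\Alg([[A,A],[A,A]])$. The element $c$ is then reached as a product $mm'$ of such generators.

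If this design still fails, the fallback is to keep $A=M_2(\mathbb{F})$ and let $B$ be a non-split algebra with nonzero products between the $A$ and $A^{\mathrm{op}}$ blocks landing in $\mathbb{F}z$. The correction $\tau$ is then replaced by off-diagonal entries, adjusted so that the Jordan identity holds exactly. I would verify this version with a finite computation of $V_\varphi$, $W_\varphi$ and $\Ann(\Alg(\varphi(A)))$ on a basis.
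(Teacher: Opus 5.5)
There is a genuine gap: neither your main ansatz nor the fallback can produce the example.

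\textbf{The main ansatz.} Under assertion (1), \Cref{lem:domain-structure}\textup{(2)} shows that $A$ is \emph{linearly spanned} by the Jordan products $x\circ y$. This follows from $[x,[u,v]]=(x\circ u)\circ v-(x\circ v)\circ u$ and $2rg=r\circ g+[r,g]$ for $g\in[[A,A],[A,A]]$. So a linear functional with $\tau(A\circ A)=0$ is identically zero. Your requirements ``$A=\Alg([[A,A],[A,A]])$'' and ``$A\circ A\neq A$'' are therefore contradictory, not merely hard to reconcile. In particular, no element $c\in[A,A]$ outside $\operatorname{span}(A\circ A)$ can be a product of double commutators.

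Structurally, whenever $\mathbb{F}z$ is a direct factor of $B$ with zero multiplication, the $z$-component of any Jordan homomorphism is a Jordan map into a null algebra. It therefore vanishes on $\operatorname{span}(A\circ A)=A$. Then $\Alg(\varphi(A))\subseteq A\oplus A^{\mathrm{op}}$ and nothing reaches $z$. Up to the change of coordinates $(d,\alpha)\mapsto(d,\alpha+\lambda(d))$, your split ansatz is exactly the coboundary case of a cocycle twist. So it fails for every $A$ satisfying (1), not only for unital ones.

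\textbf{The fallback.} $\mathrm{M}_2(\mathbb{F})$ is unital, so $\varphi(1)$ is the identity of $\Alg(\varphi(A))$ by \Cref{lem:unit-range}. The annihilator is then zero, and $V_\varphi\cap W_\varphi=0$ by \thmAref; this is \thmCref. Moreover, cross-block twists cannot help. If the product on $(A\oplus A^{\mathrm{op}})\oplus\mathbb{F}z$ is twisted by a scalar cocycle $g$ with $z$ annihilating, associativity gives $g((x_1x_2,0),(0,y^{\mathrm{op}}))=g((x_1,0),0)=0$. So the cross-block part of $g$ vanishes as soon as $A=A^2$.

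\textbf{What is missing.} The correction must sit in the multiplication of $B$, as a non-coboundary cocycle on the diagonal blocks, while $\varphi$ stays the plain diagonal map. Also, $A$ must not be $s$-unital, by \Cref{theo:sunital}. The paper's construction is as follows.
\begin{itemize}
\item Take $\Gamma=\{\omega,\mathbf a,\mathbf b,e\}$ with $e^2=e$, $e\mathbf a=\mathbf a$, $\mathbf b e=\mathbf b$ and all other products $\omega$, and put $R=\mathbb{F}\Gamma$.
\item Let $f$ be the balanced form with $f(\mathbf b,\mathbf a)=1$ and $0$ on other basis pairs, so that $f(rs,t)=f(r,st)$.
\item Put $A=\mathrm{M}_3(R)$, which satisfies (1) because $R=R^2$, and $H(X,Y)=\sum_{i,j}f(x_{ij},y_{ji})$.
\item Let $B=(A\oplus A^{\mathrm{op}})\oplus\mathbb{F}$ with product twisted by $\widetilde H(X\oplus Y^{\mathrm{op}},X'\oplus Y'^{\mathrm{op}})=H(X,X')-H(Y',Y)$, and $\varphi(x)=(x,x^{\mathrm{op}},0)$.
\end{itemize}

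The Jordan identity holds because the scalar part of $\varphi(x)\varphi(y)$ is $\Delta(x,y)=H(x,y)-H(y,x)$, which is antisymmetric. No functional killing $A\circ A$ is needed.

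Your step (iii) is replaced by a commuting pair with nonzero twist. The elements $x_0=E_{11}(\mathbf b)$ and $y_0=E_{11}(\mathbf a)$ satisfy $[x_0,y_0]=0$ and $\Delta(x_0,y_0)=1$. Hence $b(x_0,y_0)=-a(x_0,y_0)=(0,0,1)$ lies in $V_\varphi\cap W_\varphi$.

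The annihilator computation then goes as you outline. From $\Ann(R)=0$ one gets $\Ann(A)=\Ann(A\oplus A^{\mathrm{op}})=0$. One also gets $\Alg(\varphi(A))=B$ with $\Ann(B)=0\oplus0\oplus\mathbb{F}$.
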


Hence the annihilator term in \thmAref cannot be omitted, even in the finite-dimensional case.  We give the proof of \thmBref in \Cref{sec:sharpness}. In the construction there, both $A$ and the generated range $\Alg(\varphi(A))$ are nonunital.

A principal application concerns the unital condition $A=K(A)$. By \cite[Lemma~2.4]{BresarZelmanov2026} and properties of values of multilinear Lie polynomials, we show that if $A$ is unital, then $A=K(A)$ holds if and only if $A=\Alg([[A,A],[A,A]])$. More precisely, if $G=[[A,A],[A,A]]$, then $A=G+G^2$. Consequently, arbitrary Jordan homomorphisms from such algebras fall within the scope of \thmAref, which yields the following answer to the surjectivity question posed by Bre\v{s}ar and Zelmanov.

\hypertarget{thmC}{}\begin{maintheoremC}
Let $A$ be a unital algebra satisfying $A=K(A)$, and let $\varphi\colon A\to B$ be any Jordan homomorphism.  Then there exists a unique central idempotent $\varepsilon\in Z(\Alg(\varphi(A)))$ such that
$x\mapsto \varepsilon\varphi(x)$ is a homomorphism and
$x\mapsto (1_{\Alg(\varphi(A))}-\varepsilon)\varphi(x)$ is an antihomomorphism. Moreover,
$V_{\varphi}\cap W_{\varphi}=0$. In particular, every Jordan homomorphism from $A$ is splittable and standard.
\end{maintheoremC}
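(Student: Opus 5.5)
The plan is to combine \thmAref with the equivalence, for unital algebras, between $A=K(A)$ and $A=\Alg([[A,A],[A,A]])$ recorded in the introduction (via \cite[Lemma~2.4]{BresarZelmanov2026}, $A=G+G^2$ with $G=[[A,A],[A,A]]$). Unitality of $A$ then forces $\Alg(\varphi(A))$ to be unital, so the annihilator term in \thmAref vanishes. Write $R=\Alg(\varphi(A))$, $V=V_\varphi$, $W=W_\varphi$. Since $A$ is unital and $A=K(A)$, we have $A=\Alg([[A,A],[A,A]])$, so \thmAref applies: $VW=WV=0$, $V+W=R$, and $V\cap W\subseteq\Ann(R)$.

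\emph{Step 1: $e=\varphi(1)$ is the identity of $R$.} From $2\varphi(1)=\varphi(1\circ 1)=2\varphi(1)^2$ and $\operatorname{char}\mathbb{F}\neq2$, $e$ is idempotent. From $2\varphi(x)=\varphi(1\circ x)=e\varphi(x)+\varphi(x)e$, multiplying on the left by $e$ gives $e\varphi(x)e=e\varphi(x)$. Multiplying on the right by $e$ gives $e\varphi(x)e=\varphi(x)e$. Substituting back yields $e\varphi(x)=\varphi(x)e=\varphi(x)$. Hence $e$ acts as the identity on generators, and therefore on all of $R$. Consequently $\Ann(R)=0$, so $V\cap W=0$, which is splittability.

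\emph{Step 2: the central idempotent.} Now $R=V\oplus W$ is a direct sum of two-sided ideals with $VW=WV=0$. Decompose $1_R=u+\varepsilon$ with $u\in V$ and $\varepsilon\in W$. For $v\in V$ we have $v=v1_R=vu$ and likewise $v=uv$, since $v\varepsilon\in VW=0$. So $u$ is the identity of $V$, $\varepsilon$ is the identity of $W$, both are idempotent, and both are central: for $r=v+w$ we get $\varepsilon r=w=r\varepsilon$.

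Since every value $a(x,y)$ lies in $V$ and $\varepsilon V=0$, we get $\varepsilon\varphi(xy)=\varepsilon\varphi(x)\varphi(y)=(\varepsilon\varphi(x))(\varepsilon\varphi(y))$, using centrality and idempotency. Thus $x\mapsto\varepsilon\varphi(x)$ is a homomorphism. Symmetrically, every value $b(x,y)$ lies in $W$ and $(1_R-\varepsilon)W=uW=0$, so $x\mapsto(1_R-\varepsilon)\varphi(x)$ is an antihomomorphism. The two parts annihilate each other because $\varepsilon(1_R-\varepsilon)=0$ and both are central. Hence $\varphi$ is standard.

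\emph{Step 3: uniqueness.} Let $\varepsilon'$ be another central idempotent with the stated properties. Multiplicativity of $\varepsilon'\varphi$ gives $\varepsilon'a(x,y)=0$, and by centrality $\varepsilon'V=0$. Antimultiplicativity of $(1_R-\varepsilon')\varphi$ gives $(1_R-\varepsilon')W=0$. Then $\varepsilon'=\varepsilon'(u+\varepsilon)=\varepsilon'\varepsilon$, while $\varepsilon=\varepsilon'\varepsilon$. Hence $\varepsilon'=\varepsilon$.

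The only delicate point is Step 1, namely that $\Alg(\varphi(A))$ is genuinely unital even though $\varphi$ is not assumed surjective. This is exactly what removes the annihilator term of \thmAref, so I would check that computation first. The remaining steps are formal consequences of the orthogonal decomposition.
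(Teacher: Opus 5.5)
Your proposal is correct and follows the paper's proof essentially step for step. Like the paper, you pass to $A=\Alg([[A,A],[A,A]])$ via \Cref{cor:unital-equivalence} and use unitality of $\Alg(\varphi(A))$ to remove the annihilator term in \thmAref. You then split the identity along $V_{\varphi}\oplus W_{\varphi}$ and prove uniqueness by the same computation $\varepsilon'=\varepsilon'(u+\varepsilon)=\varepsilon$. The differences are only cosmetic. You verify directly, and correctly, that $\varphi(1_A)$ is the identity of $\Alg(\varphi(A))$, where the paper cites \Cref{lem:unit-range}. You also check the (anti)multiplicativity of $\varepsilon\varphi$ and $(1-\varepsilon)\varphi$ by hand, where the paper identifies them with the components from \Cref{prop:standard-mod-Q}.
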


The proof of \thmCref is given in the final section. In particular, \thmCref{} shows that the surjectivity assumption in \cite[Theorem~2.10]{BresarZelmanov2026} is unnecessary.
The remaining results describe consequences and interpretations of this structure. Put $Q=V_{\varphi}\cap W_{\varphi}$, $I=\varphi^{-1}(V_{\varphi})$, and $J=\varphi^{-1}(W_{\varphi})$. Then \Cref{cor:range} gives the canonical exact sequence
$$0 \to Q \to \Alg(\varphi(A)) \to A/I\oplus(A/J)^{\mathrm{op}} \to 0,$$
with $Q\Alg(\varphi(A))=\Alg(\varphi(A))Q=0$.~Hence the generated range is an annihilator extension of its standard quotient. We provide the corresponding Hochschild-cohomological description in \Cref{prop:cocycle-obs}, where it is used to interpret the extension and not as an independent route to the main theorem. We also prove that $Q=0$ for $s$-unital algebras generated by their double commutators, together with the corresponding special universal envelope.

The proof of the orthogonality assertion in \thmAref starts from the cyclic map theorem \cite[Theorem~2.1]{Bresar2022}. The defect identities and associativity give cross relations between $a$ and $b$, from which the quadrilinear map
$P(x,y,z,w)=a(x,y)b(z,w)$ is alternating.~A multilinear argument in six variables yields vanishing on pairs of commutator entries.~A sandwich argument then extends this vanishing to the ideals generated by the defect values, and hence $V_{\varphi}W_{\varphi}=W_{\varphi}V_{\varphi}=0.$
The annihilator inclusion follows immediately from this orthogonality and the equality
$V_{\varphi}+W_{\varphi}=\Alg(\varphi(A))$.

Apart from the present section, this paper is organized as follows. Basic facts on generated ranges and defect ideals are collected in \Cref{sec:generated-ranges}. Defect identities and the proof of \thmAref appear in \Cref{sec:defect-identities}. The sharpness of the annihilator bound, including the proof of \thmBref, is established in \Cref{sec:sharpness}. Structural consequences of the quotient by $V_{\varphi}\cap W_{\varphi}$ are discussed in \Cref{sec:consequences}. The $s$-unital, simple, and universal cases are treated in \Cref{sec:sunital-universal}. The last section concerns unital algebras satisfying $A=K(A)$ and concludes with the proof of \thmCref.

\section{Generated ranges and defect ideals}\label{sec:generated-ranges}

Let $\varphi\colon A\to B$ be a Jordan homomorphism, with defect maps
$$a(x,y)=\varphi(xy)-\varphi(x)\varphi(y) \text{ and } 
b(x,y)=\varphi(xy)-\varphi(y)\varphi(x).$$
A direct check shows that $a$ and $b$ are bilinear and skew-symmetric. Let $V_{\varphi}$ and $W_{\varphi}$ be the two-sided ideals of $\Alg(\varphi(A))$ generated by the values of $a$ and $b$ on $A$, respectively, and refer to them as the \emph{defect ideals} of $\varphi$.
The defect ideals associated with a Jordan homomorphism naturally lie in the algebra generated by its image, rather than in the ambient codomain.

We start with a few consequences of $A=\Alg([[A,A],[A,A]])$ for the domain and for the generated range $\Alg(\varphi(A))$. These elementary facts will be used repeatedly. In particular, the idempotence $\Alg(\varphi(A))=\Alg(\varphi(A))^2$ will be needed later in the analysis of annihilator extensions and $s$-unitality.

\begin{lemma}\label{lem:domain-structure}
Assume that $A=\Alg \bigl([[A,A],[A,A]] \bigr)$. Then the following assertions hold.
    \begin{enumerate}[label = \textup{(\arabic*)}]
        \item $A=K(A)=\Alg \bigl( [A,A] \bigr).$ 
        \item $A$ is linearly spanned by the Jordan products $x\circ y$ with $x,y\in A$. In particular, $A= A^2$.
        \item   $\Alg(\varphi(A)) = \Alg(\varphi(A))^2$ for any Jordan homomorphism $\varphi \colon A \to B$.
    \end{enumerate}
\end{lemma}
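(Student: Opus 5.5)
Each assertion comes from the inclusions $[[A,A],[A,A]]\subseteq [A,A]\subseteq K(A)\subseteq A$ together with elementary identities for commutators and Jordan products. Put $G=[[A,A],[A,A]]$.

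For (1), note that $G\subseteq [A,A]$, so $\Alg(G)\subseteq \Alg([A,A])$. Since $K(A)$ is a subalgebra containing $[A,A]$, we also have $\Alg([A,A])\subseteq K(A)\subseteq A$. The hypothesis $A=\Alg(G)$ then forces equality throughout, which gives $A=K(A)=\Alg([A,A])$.

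For (2), let $J$ be the linear span of all $x\circ y$. I will show that $J$ contains every product of elements of $G$; then $A=\Alg(G)\subseteq J$.

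\textbf{Products of length one.} Every element of $G$ is a sum of commutators $[u,v]$ with $u,v\in[A,A]$. The key identity is
\[
[u,v]=\tfrac12\bigl((u-v)\circ(u+v)\bigr)-\tfrac12\bigl(\ldots\bigr),
\]
but that is not quite right, because $(u+v)\circ(u-v)=2u^2-2v^2$. Instead I will use the identity
\[
[[p,q],r]=p\circ(q\circ r)-q\circ(p\circ r),
\]
which expresses a double commutator as a combination of Jordan products. Expanding the right-hand side gives $pqr+pr q+qrp+rqp-qpr-qrp-prq-rpq$, which equals $pqr-qpr+rqp-rpq=[[p,q],r]$, so the identity holds. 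It shows that every double commutator lies in $J$, and in particular $G\subseteq J$.

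\textbf{Longer products.} For a product $g_1\cdots g_n$ with $g_i\in G$ and $n\ge2$, write $g_1=[[p,q],r]$ as a sum of such terms. Then
\[
g_1 g' = [[p,q],r]\,g' = [[p,q],rg'] - r[[p,q],g'],
\]
and this direction looks messy. A cleaner route is to observe that any element $c$ of the form $[s,t]$ satisfies
\[
c\,m=\tfrac12\bigl(c\circ m+[c,m]\bigr),
\]
and that $[c,m]=[[s,t],m]\in J$ by the identity above with $p=s$, $q=t$, $r=m$. So $cm\in J$ for every commutator $c$ and every $m\in A$. Since each $g_1$ is a sum of commutators, $g_1(g_2\cdots g_n)\in J$, and hence $A=J$.

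Finally, $x\circ y=xy+yx\in A^2$, so $A=J\subseteq A^2\subseteq A$, and therefore $A=A^2$.

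For (3), let $R=\Alg(\varphi(A))$. Using (2), write each $x\in A$ as a sum $\sum y_i\circ z_i$. Then
\[
\varphi(x)=\sum\varphi(y_i)\circ\varphi(z_i)\in R^2,
\]
so $\varphi(A)\subseteq R^2$. Any product $\varphi(x_1)\cdots\varphi(x_n)$ with $n\ge2$ lies in $R^2$ trivially, and for $n=1$ it lies in $R^2$ by what was just shown. Hence $R=R^2$.

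The only step needing care is the middle one, showing $G\subseteq J$. The identity $[[p,q],r]=p\circ(q\circ r)-q\circ(p\circ r)$ settles it, and this identity uses neither the hypothesis nor characteristic assumptions beyond division by $2$.
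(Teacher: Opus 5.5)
Your proof is correct and follows essentially the same route as the paper: the same inclusion chain for (1); for (2), a commutator-to-Jordan identity giving $[[A,A],A]\subseteq J$, followed by $2cm=c\circ m+[c,m]$; and the same $\varphi(A)\subseteq\Alg(\varphi(A))^2$ argument for (3). The differences in (2) are cosmetic: you use $[[p,q],r]=p\circ(q\circ r)-q\circ(p\circ r)$ where the paper uses $[x,[u,v]]=(x\circ u)\circ v-(x\circ v)\circ u$, and you put the commutator on the left rather than the right. In a final write-up, drop the abandoned false starts (the $(u-v)\circ(u+v)$ attempt and the ``messy'' expansion), since they play no role in the argument.
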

    \begin{proof}
    Put $G=[[A,A],[A,A]]$. Since $G\subseteq[A,A]$ and $K(A)$ is the two-sided ideal generated by $[A,A]$, we have
    $$A=\Alg(G)\subseteq\Alg([A,A])\subseteq K(A)\subseteq A.$$
    Hence all inclusions are equalities, which proves part~\textup{(1)}.
    
    \medskip
    For part~\textup{(2)}, let $J$ be the subspace spanned by the elements $x\circ y$ with $x,y\in A$. The identity
    $[x,[u,v]]=(x\circ u)\circ v-(x\circ v)\circ u$ gives $[A,[A,A]]\subseteq J$. In particular, the following inclusions hold:
    $$[[A,A],[A,A]]\subseteq[A,[A,A]]\subseteq J.$$
     Now let $r\in A$ and $g\in G$. Since $G\subseteq [A,A]$, we have $[r,g]\in [A,[A,A]]\subseteq J$, and $r\circ g\in J$. Hence
    $2rg=r\circ g+[r,g]\in J.$
    Because $\operatorname{char}\mathbb{F}\neq 2$, it follows that $rg\in J$. This gives every finite product of elements of $G$ belongs to $J$. Hence $A=\Alg(G)\subseteq J$, so $A=J$ and, in particular, $A=A^2$.
    
    \medskip
     For part~\textup{(3)}, by the preceding argument, each element of $\varphi(A)$ is a finite sum of terms $\varphi(x)\circ\varphi(y)$ with $x,y\in A$, so $\varphi(A)\subseteq \Alg(\varphi(A))^2$. Since $\Alg(\varphi(A))^2$ is a two-sided ideal, we imply that $\Alg(\varphi(A))=\Alg(\varphi(A))^2$.  
    \end{proof}

The following observations require only the weaker assumption $A = K(A)$.

\begin{proposition}\label{prop:comaximal}
Let $A$ be an algebra with $A=K(A)$, and let $\varphi\colon A\to B$ be a Jordan homomorphism with defect ideals $V_{\varphi}$ and $W_{\varphi}$. Then
$$V_{\varphi}+W_{\varphi}=\Alg(\varphi(A)).$$
\end{proposition}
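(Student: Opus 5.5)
The plan is to pass to the quotient of $R=\Alg(\varphi(A))$ by the two-sided ideal $I=V_{\varphi}+W_{\varphi}$. Modulo $I$, the map $\varphi$ becomes simultaneously multiplicative and antimultiplicative. We then show that such a map must kill every commutator, and hence kill all of $K(A)=A$. Write $\pi\colon R\to R/I$ for the quotient map and $\bar\varphi=\pi\circ\varphi\colon A\to R/I$.

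First, since $a(x,y)\in V_{\varphi}\subseteq I$ and $b(x,y)\in W_{\varphi}\subseteq I$, we have
$$\bar\varphi(xy)=\bar\varphi(x)\bar\varphi(y)=\bar\varphi(y)\bar\varphi(x)\quad\text{for all }x,y\in A.$$
This uses only the definitions of $a$ and $b$. Equivalently, $a(x,y)-b(x,y)=[\varphi(y),\varphi(x)]\in I$. Consequently
$$\bar\varphi([x,y])=\bar\varphi(x)\bar\varphi(y)-\bar\varphi(y)\bar\varphi(x)=0.$$

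Next, $K(A)$ is spanned by elements of the form $[x,y]$, $r[x,y]$, $[x,y]s$ and $r[x,y]s$ with $r,s,x,y\in A$. Because $\bar\varphi$ is multiplicative, we get for instance
$$\bar\varphi(r[x,y]s)=\bar\varphi(r)\,\bar\varphi([x,y])\,\bar\varphi(s)=0,$$
applying multiplicativity twice, to $(r[x,y])s$ and then to $r[x,y]$. The other three types vanish in the same way. By linearity $\bar\varphi$ vanishes on $K(A)=A$, that is, $\varphi(A)\subseteq I$.

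Finally, $I$ is an ideal of $R$, so in particular it is a subalgebra. Since it contains the generating set $\varphi(A)$ of $R$, we get $R=\Alg(\varphi(A))\subseteq I\subseteq R$, which gives the equality.

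There is no serious obstacle. The only point needing care is the description of $K(A)$ as the span of the four types of elements in a possibly nonunital algebra. This is exactly why we apply multiplicativity of $\bar\varphi$ separately on each side rather than relying on an identity element.
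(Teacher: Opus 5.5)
Your proposal is correct and follows essentially the same route as the paper: pass to $\Alg(\varphi(A))/(V_{\varphi}+W_{\varphi})$, note that the induced map is both multiplicative and antimultiplicative and so kills $[A,A]$, then conclude that it kills $K(A)=A$ and that the quotient, being generated by this image, is zero. Your spanning description of $K(A)$ by four types of elements just unpacks the paper's remark that $\ker\overline{\varphi}$ is a two-sided ideal containing $[A,A]$.
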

\begin{proof}
Put $\overline{\Alg(\varphi(A))} = \Alg(\varphi(A))/(V_{\varphi} + W_{\varphi})$. Let $\overline{\varphi}\colon A \to \overline{\Alg(\varphi(A))}$ denote the map induced by $\varphi$. Both defects of $\overline{\varphi}$ vanish. It follows that $\overline{\varphi}$ is simultaneously a homomorphism and an antihomomorphism, which forces $\overline{\varphi}([A,A])=0$. Since $\ker\overline{\varphi}$ is a two-sided ideal containing $[A,A]$, it contains $K(A)=A$, hence $\overline{\varphi}(A)=0$. Note that $\overline{\Alg(\varphi(A))}$ is generated by $\overline{\varphi}(A)$. Thus, $\overline{\Alg(\varphi(A))}=0$, and consequently $V_{\varphi} + W_{\varphi} = \Alg(\varphi(A))$.
\end{proof}

By \Cref{prop:comaximal}, the two defect ideals already generate $\Alg(\varphi(A))$, so their images become disjoint after quotienting by the intersection. This is what separates the homomorphic and antihomomorphic components below.

\begin{proposition}\label{prop:standard-mod-Q}
Assume that $A=K(A)$. Let $\varphi\colon A\to B$ be a Jordan homomorphism and set $Q=V_{\varphi}\cap W_{\varphi}$. Then the following statements hold.
    \begin{enumerate}[label=\textup{(\arabic*)}]
    \item The quotient $\Alg(\varphi(A))/Q$ decomposes as $V_{\varphi}/Q \oplus W_{\varphi}/Q$, and the products between the two summands are zero.

    \item Let $\varphi_Q\colon A\to \Alg(\varphi(A))/Q,$ $\varphi_Q(x)=\varphi(x)+Q.$ Let $\pi_{V_{\varphi}}$ and $\pi_{W_{\varphi}}$ be the corresponding projections, regarded as endomorphisms of $\Alg(\varphi(A))/Q$, and set $\varphi_{-}=\pi_{V_{\varphi}}\circ\varphi_{Q}$ and
    $\varphi_{+}=\pi_{W_{\varphi}}\circ\varphi_{Q}$. Then $\varphi_{+}$ is a homomorphism and $\varphi_{-}$ is an  antihomomorphism whose ranges  annihilate each other. In particular, $\varphi_{Q}$ is standard.
    \end{enumerate}
\end{proposition}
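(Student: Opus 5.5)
Part (1) will follow from \Cref{prop:comaximal} together with the second isomorphism theorem. Put $R=\Alg(\varphi(A))$. By \Cref{prop:comaximal}, $V_{\varphi}+W_{\varphi}=R$. Since $Q=V_{\varphi}\cap W_{\varphi}$, the images $V_{\varphi}/Q$ and $W_{\varphi}/Q$ are two-sided ideals of $R/Q$ that sum to $R/Q$ and intersect in zero, so $R/Q=V_{\varphi}/Q\oplus W_{\varphi}/Q$ as vector spaces. The products between the summands vanish because $V_{\varphi}W_{\varphi}\subseteq V_{\varphi}\cap W_{\varphi}=Q$ and $W_{\varphi}V_{\varphi}\subseteq Q$; this uses only that both are two-sided ideals. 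Hence $R/Q$ is the direct product of the two ideals as an algebra. The projections $\pi_{V_{\varphi}}$ and $\pi_{W_{\varphi}}$ are therefore algebra homomorphisms of $R/Q$, they are orthogonal, and $\pi_{V_{\varphi}}+\pi_{W_{\varphi}}=\mathrm{id}$.

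For part (2), the key observation is that each defect dies in the complementary summand. Take $x,y\in A$. Then $a(x,y)\in V_{\varphi}$, so $\pi_{W_{\varphi}}(a(x,y)+Q)=0$. Since $\pi_{W_{\varphi}}$ is multiplicative, this gives
$$\varphi_+(xy)=\pi_{W_{\varphi}}\bigl(\varphi(x)\varphi(y)+Q\bigr)=\varphi_+(x)\varphi_+(y),$$
so $\varphi_+$ is a homomorphism. Symmetrically, $b(x,y)\in W_{\varphi}$ is killed by $\pi_{V_{\varphi}}$, which gives $\varphi_-(xy)=\varphi_-(y)\varphi_-(x)$, so $\varphi_-$ is an antihomomorphism.

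The ranges satisfy $\varphi_+(A)\subseteq W_{\varphi}/Q$ and $\varphi_-(A)\subseteq V_{\varphi}/Q$, so they annihilate each other by part (1). Finally, $\varphi_Q=\varphi_++\varphi_-$ because the two projections sum to the identity. This is exactly the definition of standardness.

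None of these steps is a real obstacle. The substantive input is \Cref{prop:comaximal}, which is where $A=K(A)$ is used. The one place needing care is checking that the projections are multiplicative; this holds because the decomposition in (1) is a decomposition into two ideals with zero cross products.
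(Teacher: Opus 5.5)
Your proposal is correct and follows essentially the same route as the paper: \Cref{prop:comaximal} together with $V_{\varphi}W_{\varphi}+W_{\varphi}V_{\varphi}\subseteq Q$ gives the decomposition of $\Alg(\varphi(A))/Q$ into two ideals with zero cross products. The projections are then multiplicative, and each defect is killed by the complementary projection. Your explicit computation of $\varphi_{+}(xy)$ via $\pi_{W_{\varphi}}(a(x,y)+Q)=0$ simply spells out the step the paper states more briefly.
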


\begin{proof}
    For part \textup{(1)}, we deduce from \Cref{prop:comaximal} that
    $$\Alg(\varphi(A))/Q=V_{\varphi}/Q+W_{\varphi}/Q.$$ The equality $Q = V_{\varphi} \cap W_{\varphi}$ yields trivial intersection of the summands in $\Alg(\varphi(A))/Q$, and hence the sum is direct. The inclusions $V_{\varphi}W_{\varphi} \subseteq Q$ and $W_{\varphi}V_{\varphi} \subseteq Q$ further imply that the summands annihilate each other.

    \medskip

    We now turn to part \textup{(2)}.  Since $\Alg(\varphi(A))/Q=V_{\varphi}/Q\oplus W_{\varphi}/Q$ and the two summands annihilate each other, the projections $\pi_{W_{\varphi}}$ and $\pi_{V_{\varphi}}$ are $\mathbb{F}$-algebra homomorphisms.
    So $\varphi_{+},\,\varphi_{-}\colon A\to \Alg(\varphi(A))/Q$ are the two components of $\varphi_{Q}$.
    The defect of $\varphi_{Q}$ takes values in the complementary summand. Indeed, for $x,y\in A$, the two defect expressions lie in different components,
        \begin{equation*}
        \begin{aligned}
        \varphi_{Q}(xy)-\varphi_{Q}(x)\varphi_{Q}(y) &= a(x,y)+Q \in V_{\varphi}/Q,\\
        \varphi_{Q}(xy)-\varphi_{Q}(y)\varphi_{Q}(x) &= b(x,y)+Q \in W_{\varphi}/Q.
        \end{aligned}
        \end{equation*}
    Note that the two projections split the
    defects as $\varphi_{+}(xy)=\varphi_{+}(x)\varphi_{+}(y)$ and
    $\varphi_{-}(xy)=\varphi_{-}(y)\varphi_{-}(x)$. Consequently, $\varphi_{+}$ is a homomorphism and $\varphi_{-}$ is an antihomomorphism. Since $V_{\varphi}/Q$ and $W_{\varphi}/Q$ annihilate each other, $\varphi_{+}(A)\varphi_{-}(A)=\varphi_{-}(A)\varphi_{+}(A)=0$. Hence $\varphi_{Q}=\varphi_{+}+\varphi_{-}$ is the sum of maps of these two types with orthogonal images, so $\varphi_{Q}$ is standard, as required.
    \end{proof}

\section{Defect identities}\label{sec:defect-identities}

We next establish the identities describing products involving the defect ideals. We start from the cyclic map theorem \cite[Theorem~2.1]{Bresar2022}, which applies to the skew-symmetric bilinear defects $a$ and $b$ when $A$ is generated by double commutators. The resulting cyclic identities yield relative Leibniz formulas for the two defects, while associativity in the generated range leads to the corresponding cross identities. These identities give rise to an alternating quadrilinear map whose vanishing on suitable commutator entries is the crucial step toward orthogonality of the two defect ideals.

\begin{lemma}\label{lem:cyclic}
If $A=\Alg \bigl([[A,A],[A,A]] \bigr)$, then the following two formulas
\begin{align*}
 a(xy,z)+a(zx,y)+a(yz,x)&=0,\\
 b(xy,z)+b(zx,y)+b(yz,x)&=0,
\end{align*}
hold for all $x,y,z\in A$.
\end{lemma}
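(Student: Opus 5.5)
The plan is to obtain both identities directly from the cyclic map theorem \cite[Theorem~2.1]{Bresar2022}. As I recall it, that theorem says: if $A=\Alg([[A,A],[A,A]])$ and $f\colon A\times A\to X$ is a skew-symmetric bilinear map into a vector space $X$ with $f(x^2,x)=0$ for all $x\in A$, then $f$ is \emph{cyclic}, meaning
$$f(xy,z)+f(zx,y)+f(yz,x)=0 \quad \text{for all } x,y,z\in A.$$
I will apply it separately to $f=a$ and $f=b$, with $X=B$. It therefore suffices to check the two hypotheses for each defect map.

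\emph{Bilinearity and skew-symmetry.} Bilinearity is clear. Expanding the definitions gives
$$a(x,y)+a(y,x)=\varphi(x\circ y)-\varphi(x)\circ\varphi(y)=0,$$
and the same computation gives $b(x,y)+b(y,x)=0$. This is the remark already made at the start of \Cref{sec:generated-ranges}.

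\emph{The condition $f(x^2,x)=0$.} Put $y=x$ in the Jordan identity, so $2\varphi(x^2)=2\varphi(x)^2$. Since $\operatorname{char}\mathbb{F}\neq2$, this gives $\varphi(x^2)=\varphi(x)^2$. Next note that $x\circ x^2=2x^3$, and apply $\varphi$:
$$2\varphi(x^3)=\varphi(x)\circ\varphi(x^2)=\varphi(x)\circ\varphi(x)^2=2\varphi(x)^3.$$
Hence $\varphi(x^3)=\varphi(x)^3$. It follows that
$$a(x^2,x)=\varphi(x^3)-\varphi(x^2)\varphi(x)=\varphi(x)^3-\varphi(x)^3=0.$$
Because $\varphi(x^2)=\varphi(x)^2$ commutes with $\varphi(x)$, the same argument gives $b(x^2,x)=\varphi(x^3)-\varphi(x)\varphi(x^2)=0$.

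With both hypotheses verified, the cited theorem yields the two displayed formulas of \Cref{lem:cyclic}. The only step I expect could cause trouble is matching the exact form of the hypothesis in \cite[Theorem~2.1]{Bresar2022}.
\begin{itemize}
\item If that theorem is stated with a linearized condition, such as $f(xy,z)+f(yz,x)+f(zx,y)$ being symmetric in its arguments, or $f(x\circ y,z)+f(y\circ z,x)+f(z\circ x,y)=0$, I will obtain it by linearizing $f(x^2,x)=0$. Replacing $x$ by $x+ty$ and comparing coefficients of powers of $t$ is enough, since the field has at least three elements because $\operatorname{char}\mathbb{F}\neq2$.
\item If instead the theorem is phrased for maps satisfying $f(x,x)=0$ together with $f(x^2,x)=0$, the first condition follows from skew-symmetry, again because $\operatorname{char}\mathbb{F}\neq 2$.
\end{itemize}
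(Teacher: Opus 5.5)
Your proposal is correct and is essentially the paper's proof: you check that $a$ and $b$ are skew-symmetric, use $\varphi(x^2)=\varphi(x)^2$ and $x\circ x^2=2x^3$ to get $\varphi(x^3)=\varphi(x)^3$ and hence $a(x^2,x)=b(x^2,x)=0$, and then apply Bre\v{s}ar's cyclic map theorem to each defect separately. The only difference is cosmetic: the paper reads that theorem as giving the cyclic identity with one variable in $\Alg([[A,A],[A,A]])$, which the hypothesis makes all of $A$, so your fallback linearization arguments are not needed.
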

    \begin{proof}
    Note that both $a$ and $b$ are skew-symmetric bilinear maps. Since $\varphi$ preserves Jordan products and $\operatorname{char}\mathbb{F} \neq 2$, we have $\varphi(x^2)=\varphi(x)^2.$ Applying this to $x^2\circ x=2x^3$ gives $\varphi(x^3)=\varphi(x)^3$, and hence
    $$a(x^2,x)=b(x^2,x)=0.$$
    Therefore, we may apply \cite[Theorem~2.1]{Bresar2022} separately to $a$ and $b$. Since
    $\Alg([[A,A],[A,A]])=A,$
    the variable in this generated algebra ranges over all of $A$, and the two cyclic identities follow.
    \end{proof}

The cyclic identities can then be rewritten as relative Leibniz formulas, which express the defect at a product via left and right multiplication of defect values by elements of $\varphi(A)$.

\begin{lemma}\label{lem:Leibniz}
If $A=\Alg \bigl([[A,A],[A,A]] \bigr)$,  then the following two formulas 
    \begin{align*}
    a(y,zx)&=\varphi(x) a(y,z)+a(y,x)\varphi(z),\\
    b(y,zx)&=\varphi(z)b(y,x)+b(y,z)\varphi(x),
    \end{align*}
hold for all $x,y,z\in A$.
\end{lemma}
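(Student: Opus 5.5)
The plan is to combine the cyclic identities of \Cref{lem:cyclic} with skew-symmetry and the defining formulas for $a$ and $b$. No further input from the hypothesis $A=\Alg([[A,A],[A,A]])$ should be needed beyond \Cref{lem:cyclic}.

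For the first formula, I would start from the cyclic identity $a(xy,z)+a(zx,y)+a(yz,x)=0$ and use skew-symmetry to rewrite it as
$$a(y,zx)=-a(zx,y)=a(xy,z)+a(yz,x).$$
The two terms on the right are expanded so that the triple product $\varphi(xyz)$ appears in both with opposite signs. For the first term, write $a(xy,z)=\varphi(xyz)-\varphi(xy)\varphi(z)$ and substitute $\varphi(xy)=a(x,y)+\varphi(x)\varphi(y)$. For the second term, use skew-symmetry first, so that $a(yz,x)=-a(x,yz)=-\varphi(xyz)+\varphi(x)\varphi(yz)$, and then substitute $\varphi(yz)=a(y,z)+\varphi(y)\varphi(z)$. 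Adding the two expressions, the terms $\pm\varphi(xyz)$ and $\pm\varphi(x)\varphi(y)\varphi(z)$ cancel. What remains is $-a(x,y)\varphi(z)+\varphi(x)a(y,z)$, and by skew-symmetry this equals $a(y,x)\varphi(z)+\varphi(x)a(y,z)$, which is the first formula.

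The second formula follows the same pattern, using $b(u,v)=\varphi(uv)-\varphi(v)\varphi(u)$. From the cyclic identity for $b$ we get $b(y,zx)=b(xy,z)+b(yz,x)$. Expand $b(xy,z)=\varphi(xyz)-\varphi(z)\varphi(xy)$ with $\varphi(xy)=b(x,y)+\varphi(y)\varphi(x)$. Expand $b(yz,x)=-b(x,yz)=-\varphi(xyz)+\varphi(yz)\varphi(x)$ with $\varphi(yz)=b(y,z)+\varphi(z)\varphi(y)$. This time the cancelling pair is $\pm\varphi(z)\varphi(y)\varphi(x)$, leaving $-\varphi(z)b(x,y)+b(y,z)\varphi(x)=\varphi(z)b(y,x)+b(y,z)\varphi(x)$.

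The computation is routine. The only delicate point is the choice of expansion order for each of the two terms: one defect must be expanded directly and the other via skew-symmetry, so that the associative triple products cancel exactly. Expanding both terms directly would produce mismatched products such as $\varphi(y)\varphi(z)\varphi(x)$ that do not cancel.
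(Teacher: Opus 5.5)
Your proposal is correct and matches the paper's proof: the cyclic identity together with skew-symmetry gives $a(y,zx)=a(xy,z)-a(x,yz)$ (your term $a(yz,x)$ is just $-a(x,yz)$), and expanding via the definition of the defect cancels $\varphi(xyz)$ and $\varphi(x)\varphi(y)\varphi(z)$. Your explicit computation for $b$, where the cancelling pair is $\pm\varphi(z)\varphi(y)\varphi(x)$, correctly fills in the step the paper dismisses as ``the same reasoning.''
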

    \begin{proof}
    The first formula in \Cref{lem:cyclic}, together with skew-symmetry, yields
    $$a(xy,z)=a(y,zx)+a(x,yz).$$
    Equating the two expressions for $a(xy,z)$ gives
    \begin{align*}
    a(y,zx) &=a(xy,z)-a(x,yz)\\
            &=\varphi(x)\bigl[\varphi(yz)-\varphi(y)\varphi(z)\bigr]-\bigl[\varphi(xy)-\varphi(x)\varphi(y) \bigr]\varphi(z) \\
            &=\varphi(x)a(y,z)-a(x,y)\varphi(z)\\
            &=\varphi(x)a(y,z)+a(y,x)\varphi(z).
    \end{align*}
    The same reasoning applied to $b$ gives the second formula.
    \end{proof}

    The relative Leibniz identities handle the two defects separately. We use associativity on the generated range to relate mixed products involving both defects.
    The resulting cross relations give the first direct link between the two families of identities.
\begin{proposition}\label{prop:cross}
If $A=\Alg \bigl([[A,A],[A,A]] \bigr)$, then the following formulas
\begin{align*}
 b(y,z)a(w,x)&=a(w,z)b(x,y),\\
 a(x,y)b(w,z)&=b(w,x)a(y,z),
\end{align*}
are valid for all $x,y,z,w\in A$. In particular, $a(x,y)b(z,w)=b(x,y)a(z,w)$.
\end{proposition}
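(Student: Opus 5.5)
The plan is to get both cross relations by expanding a triple product inside the defect in two ways, using the relative Leibniz formulas of \Cref{lem:Leibniz}. Associativity in $A$ makes the two expansions equal. The leftover terms then regroup into products of an $a$-value and a $b$-value. The only other input is the elementary identity
$$\varphi(uv)-\varphi(v)\varphi(u)=b(u,v),$$
together with its $a$-analogue and the skew-symmetry of $a$ and $b$.

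For the first relation, fix $y$ and compute $a\bigl(y,(zx)w\bigr)=a\bigl(y,z(xw)\bigr)$ by two applications of \Cref{lem:Leibniz}. On the left, take $zx$ as the first factor:
$$a(y,(zx)w)=\varphi(w)\bigl[\varphi(x)a(y,z)+a(y,x)\varphi(z)\bigr]+a(y,w)\varphi(zx).$$
On the right, take $xw$ as the second factor:
$$a(y,z(xw))=\varphi(xw)a(y,z)+\bigl[\varphi(w)a(y,x)+a(y,w)\varphi(x)\bigr]\varphi(z).$$
Subtracting, the terms $\varphi(w)a(y,x)\varphi(z)$ cancel. What remains is
$$\bigl[\varphi(w)\varphi(x)-\varphi(xw)\bigr]a(y,z)+a(y,w)\bigl[\varphi(zx)-\varphi(x)\varphi(z)\bigr]=0,$$
that is, $b(x,w)a(y,z)=a(y,w)b(z,x)$. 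Renaming the variables ($x\mapsto y$, $w\mapsto z$, $y\mapsto w$, $z\mapsto x$) gives $b(y,z)a(w,x)=a(w,z)b(x,y)$.

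The second relation comes from the mirror computation. Expand $b\bigl(y,(zx)w\bigr)=b\bigl(y,z(xw)\bigr)$ with the second formula of \Cref{lem:Leibniz}. Now the multiplications by $\varphi$ sit on the opposite sides, so the leftover brackets become $a$-defects, namely $\varphi(x)\varphi(w)-\varphi(xw)=-a(x,w)$ and $\varphi(zx)-\varphi(z)\varphi(x)=a(z,x)$. After the same cancellation this gives an identity of the form $a(\cdot,\cdot)b(\cdot,\cdot)=b(\cdot,\cdot)a(\cdot,\cdot)$, and renaming yields $a(x,y)b(w,z)=b(w,x)a(y,z)$.

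The hardest step is the final claim $a(x,y)b(z,w)=b(x,y)a(z,w)$, which is pure index bookkeeping. The first relation converts a product $b\cdot a$ into $a\cdot b$ with a specific permutation of the four variables, and the second converts $a\cdot b$ into $b\cdot a$. Skew-symmetry in each pair adds transpositions at the cost of signs. Naive alternation of the two rules cycles back to the starting expression. My plan is to track the index permutation, with signs from skew-symmetry, of each composite ``second relation then first relation'' and to look for a word in these moves realizing the required transformation. Failing that, I would substitute a swapped pair (for instance $y\leftrightarrow z$) into one relation before chaining, to break the cycle. If neither works, I would consider adding $a+b=\varphi([\cdot,\cdot])$ to the toolkit as a further identity.
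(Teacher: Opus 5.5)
Your derivation of the two cross relations is correct and is essentially the paper's argument. The paper expands $a(w,(xy)z)=a(w,x(yz))$ via \Cref{lem:Leibniz}, substitutes $\varphi(xy)=\varphi(x)\varphi(y)+a(x,y)$, and then cancels the resulting $a\cdot a$ terms using $[\varphi(z),\varphi(y)]=a(y,z)-b(y,z)$. You instead group the leftover brackets directly into $b$-defects (respectively $a$-defects), which is a slightly cleaner route to the same identities. The renaming you leave implicit in the second relation does work. Your computation gives $a(z,x)b(y,w)=b(y,z)a(x,w)$, and the substitution $z\mapsto x$, $x\mapsto y$, $y\mapsto w$, $w\mapsto z$ turns it into $a(x,y)b(w,z)=b(w,x)a(y,z)$.

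The gap is the final assertion $a(x,y)b(z,w)=b(x,y)a(z,w)$. You describe a search with fallbacks, but you never produce a derivation, so this part of the statement is not proved. It matters, because it is the identity used later in \Cref{cor:two-comm}. Your second idea, inserting a single skew-symmetry swap between the two relations, is exactly what works. Nothing further is needed; in particular, you do not need $a+b=\varphi([\cdot,\cdot])$.

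Write the relations as $a(p,q)b(r,s)=b(r,p)a(q,s)$ and $b(p,q)a(r,s)=a(r,q)b(s,p)$. Then
\begin{align*}
a(x,y)b(z,w)&=b(z,x)a(y,w)=-b(x,z)a(y,w)=-a(y,z)b(w,x)\\
&=a(y,z)b(x,w)=b(x,y)a(z,w).
\end{align*}
The steps use, in order: the second relation, skew-symmetry of $b$, the first relation, skew-symmetry of $b$, and the second relation again. This is the paper's chain.

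Equivalently, the first three steps show that $P(x,y,z,w)=a(x,y)b(z,w)$ changes sign under the cyclic shift $(x,y,z,w)\mapsto(y,z,w,x)$. Together with skew-symmetry in each pair, this makes $P$ skew-symmetric in all four arguments. The first relation gives $b(x,y)a(z,w)=P(z,y,w,x)$. Since $(x,y,z,w)\mapsto(z,y,w,x)$ is a $3$-cycle, it follows that $P(z,y,w,x)=P(x,y,z,w)$.
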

    \begin{proof}
 Put $X=\varphi(x)$, $Y=\varphi(y)$, and $Z=\varphi(z)$.   Apply the first formula of \Cref{lem:Leibniz} to $(xy)z$ and $x(yz)$ to obtain
\begin{align*}
a(w,(xy)z) &= ZY\,a(w,x)+Z\,a(w,y)X+a(w,z)XY+a(w,z)a(x,y),\\
a(w,x(yz)) &= YZ\,a(w,x)+a(y,z)a(w,x)+Z\,a(w,y)X+a(w,z)YX.
\end{align*}
Taken in turn, these equalities give
\[
[Z,Y]a(w,x)+a(w,z)[X,Y]+a(w,z)a(x,y)-a(y,z)a(w,x)=0.
\]
The identities $[Z,Y]=a(y,z)-b(y,z)$ and $[X,Y]=b(x,y)-a(x,y)$ imply 
cancellation of all terms involving $a$ alone. Consequently,
\begin{equation} \label{eq:prop3.31}
b(y,z)a(w,x)=a(w,z)b(x,y).
\end{equation}
The second formula of \Cref{lem:Leibniz}, applied in the same manner, gives
\begin{align*}
b(w,(xy)z) &= XY\,b(w,z)+a(x,y)b(w,z)+X\,b(w,y)Z+b(w,x)YZ,\\
b(w,x(yz)) &= XY\,b(w,z)+X\,b(w,y)Z+b(w,x)YZ+b(w,x)a(y,z),
\end{align*}
    and thereby the following identity holds: \begin{equation} \label{eq:prop3.32}
a(x,y)b(w,z)=b(w,x)a(y,z).
\end{equation}
When combined with skew-symmetry, the identities (\ref{eq:prop3.31}) and (\ref{eq:prop3.32})  now give
\begin{align*}
a(x,y)\,b(z,w) &= \phantom{-}b(z,x)\,a(y,w)\\
               &= -b(x,z)\,a(y,w)\\
               &= -a(y,z)\,b(w,x)\\
               &= \phantom{-}a(y,z)\,b(x,w)\\
               &= \phantom{-}b(x,y)\,a(z,w).
\end{align*}
    This proves the final assertion.
    \end{proof}

We shall use the standard terminology for alternating multilinear maps. If $E$ and $L$ are vector spaces over $\mathbb{F}$, a multilinear map
$$f\colon \underbrace{E \times \dots \times E}_{n \text{ times}} \to L$$
is called \emph{alternating} if $f(x_1,\ldots,x_n)=0$ whenever two arguments coincide. Since $\operatorname{char}\mathbb{F} \neq 2$, this is equivalent to skew-symmetry.

The cross relations are assembled into a single multilinear map. Together with \Cref{prop:cross}, the skew-symmetry of $a$ and $b$ then yields further symmetry relations between the two defects.

\begin{corollary}\label{cor:P-alt}
Assume that $A=\Alg \bigl([[A,A],[A,A]] \bigr)$. Then the map $P\colon A\times A\times A\times A\to B$ defined by $P(x,y,z,w)=a(x,y)b(z,w)$ is alternating quadrilinear, and the identity
$$P([x,y],z,u,v)+P([z,x],y,u,v)+P([y,z],x,u,v)=0$$
holds for all $x,y,z,u,v\in A$.
\end{corollary}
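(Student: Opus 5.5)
The plan is to derive both assertions directly from \Cref{prop:cross} and \Cref{lem:cyclic}, with no new computation in $\Alg(\varphi(A))$. Quadrilinearity of $P$ is immediate, since $a$ and $b$ are bilinear and the product in $B$ is bilinear.

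For alternation, I will show that $P(x_{\sigma(1)},\dots,x_{\sigma(4)})=\operatorname{sgn}(\sigma)P(x_1,\dots,x_4)$ for every permutation $\sigma\in S_4$. Because $\operatorname{char}\mathbb{F}\neq2$, this skew-symmetry is equivalent to alternation, as noted above.

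\begin{itemize}
\item Skew-symmetry of $a$ and $b$ gives $P(y,x,z,w)=-P(x,y,z,w)$ and $P(x,y,w,z)=-P(x,y,z,w)$. So $P$ transforms by the sign under the transpositions $(12)$ and $(34)$.
\item The chain of equalities at the end of the proof of \Cref{prop:cross} contains, as an intermediate step,
\[
a(x,y)\,b(z,w)=a(y,z)\,b(x,w).
\]
This says $P(x,y,z,w)=P(y,z,x,w)$, i.e.\ $P$ is invariant under a $3$-cycle on the first three slots. That is consistent with the sign rule, since a $3$-cycle is even.
\item The $3$-cycle together with $(12)$ generates the symmetric group on $\{1,2,3\}$. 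Adding $(34)$ then generates all of $S_4$.
\end{itemize}

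Since the sign character is multiplicative, $P$ transforms by the sign under every permutation, so $P$ is alternating. The only point needing care is to read off the intermediate equality from the chain correctly and check which permutation it represents; I would verify this index bookkeeping first.

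For the displayed identity, write $P([x,y],z,u,v)=a([x,y],z)\,b(u,v)$. It therefore suffices to prove
\[
a([x,y],z)+a([z,x],y)+a([y,z],x)=0.
\]
Expanding the commutators, the left side splits into two groups:
\[
\bigl(a(xy,z)+a(zx,y)+a(yz,x)\bigr)-\bigl(a(yx,z)+a(xz,y)+a(zy,x)\bigr).
\]
The first bracket vanishes by \Cref{lem:cyclic}. The second bracket is exactly the first formula of \Cref{lem:cyclic} with the roles of $x$ and $y$ interchanged, so it vanishes as well. Right-multiplying the resulting zero by $b(u,v)$ gives the claim.
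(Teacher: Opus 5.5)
Your proposal is correct and matches the paper's proof: the commutator identity is obtained exactly as in the paper, by subtracting the cyclic identity for $(y,x,z)$ from the one for $(x,y,z)$ and right-multiplying by $b(u,v)$, and the relation $P(x,y,z,w)=P(y,z,x,w)$ you read off from the chain in \Cref{prop:cross} is valid (it is the equality $a(x,y)b(z,w)=a(y,z)b(x,w)$). The only cosmetic difference is in the alternation step: the paper takes the middle transposition $P(x,z,y,w)=-P(x,y,z,w)$ directly from \Cref{prop:cross} and concludes from the three adjacent transpositions, whereas you combine this even $3$-cycle with $(12)$ and $(34)$ to generate $S_4$.
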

    \begin{proof}
    It is clear that $P$ is quadrilinear. Since $a$ and $b$ are skew-symmetric, $P$ changes sign upon swapping the first two variables or the last two. For the middle pair, \Cref{prop:cross} gives
    $$P(x,z,y,w)=a(x,z)b(y,w)=b(y,x)a(z,w)=-P(x,y,z,w).$$
    Every adjacent transposition changes the sign of $P$, so $P$ is skew-symmetric and, because $\operatorname{char}\mathbb{F}\neq 2$, alternating. For the second assertion, subtract the first cyclic identity in \Cref{lem:cyclic} for $(y,x,z)$ from the same identity for $(x,y,z)$. By bilinearity, we obtain
    $$a([x,y],z)+a([z,x],y)+a([y,z],x)=0.$$
    Right multiplication by $b(u,v)$ now yields the required identity for $P$.
    \end{proof}

The following lemma isolates the purely multilinear part of the argument needed to treat two commutator entries. It is independent of the defect maps and uses only alternation and the displayed commutator identity.

\begin{lemma}\label{lem:six}
Let $R$ be an associative $\mathbb{F}$-algebra, $E$ an $\mathbb{F}$-vector space, and $\tau \colon R \times R \times R \times R \to E$ an alternating quadrilinear map. Suppose that
\begin{equation*}
\tau([x,y],z,u,v)+\tau([z,x],y,u,v)+\tau([y,z],x,u,v)=0  \eqno{\textup{($*$)}}
\end{equation*}
for all $x,y,z,u,v\in R$. Then
$$\tau([x_1,x_2],[x_3,x_4],x_5,x_6)=0$$
for all $x_1,\ldots,x_6\in R$.
\end{lemma}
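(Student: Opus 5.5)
The plan is to reduce the statement to a finite linear-algebra problem about a few auxiliary multilinear forms in four variables, with $x_5,x_6$ (renamed $u,v$) held fixed throughout. Define
$$F(a,b,c,d)=\tau([a,b],[c,d],u,v),\qquad G(a,b,c,d)=\tau([[a,b],c],d,u,v).$$
We must show $F=0$.

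First I record the symmetries of these two forms.
\begin{enumerate}
\item Since $\tau$ is alternating and the bracket is skew, $F$ is skew in $(a,b)$ and skew in $(c,d)$.
\item Swapping the first two slots of $\tau$ gives $F(a,b,c,d)=-F(c,d,a,b)$.
\item $G$ is skew in $(a,b)$.
\item By the Jacobi identity in the first slot, $G(a,b,c,d)+G(b,c,a,d)+G(c,a,b,d)=0$.
\end{enumerate}

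Next I feed commutators into $(*)$ in two ways. Taking $x=a$, $y=b$, $z=[c,d]$ in $(*)$ gives
$$F(a,b,c,d)=G(c,d,b,a)-G(c,d,a,b).$$
Taking $x=[a,b]$, $y=c$, $z=d$ gives
$$F(c,d,a,b)=G(a,b,d,c)-G(a,b,c,d).$$
Because $\tau$ is alternating, $(*)$ may be transported to any slot, with the cyclic sum in that slot and a sign change. This gives further relations, for instance those obtained by placing $[c,d]$ in the third slot of $\tau$, and then using alternation to bring it back into the second slot. These extra relations mix $G$ with the analogous form $G'(a,b,c,d)=\tau([a,b],c,[\,\cdot,\cdot\,]\ldots)$ only through terms that are again of type $F$ or $G$ after reordering. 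The goal is to collect enough such identities to force a relation of the form $kF=0$ with $k$ a power of $2$, which suffices since $\operatorname{char}\mathbb{F}\neq2$.

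The key step is then to combine these identities. Substituting the expression of $F$ through $G$ into the pair-swap antisymmetry $F(a,b,c,d)=-F(c,d,a,b)$ yields linear relations purely in $G$. I will combine these relations with the Jacobi relation for $G$, summing the first displayed identity over suitable permutations of $(a,b,c,d)$. Concretely, I will cyclically permute $(b,c,d)$ with $a$ fixed and use Jacobi to eliminate the combinations $G(\cdot,\cdot,\cdot,a)$. The expected outcome is that the $G$-terms cancel and leave $2F(a,b,c,d)=0$, or $4F(a,b,c,d)=0$.

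The main obstacle is exactly this bookkeeping. One has to verify that the relations coming from $(*)$ and the Jacobi identity span enough of the $S_4$-module of four-variable multilinear forms to kill $F$, without dividing by $3$. If the relations available in a single slot turn out to be insufficient, I will also use $(*)$ applied in the third slot with $x_5$ or $x_6$ itself replaced by a commutator, which relates $\tau([a,b],c,[d,e],v)$ to $F$-type terms. I will then exploit full alternation of $\tau$ across the first three slots to close the system.
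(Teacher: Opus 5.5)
\textbf{There is a genuine gap: your key step cannot succeed as designed.} Everything it uses is preserved if you replace $\tau(\cdot,\cdot,u,v)$ by an arbitrary alternating bilinear form $\sigma$ on $R$ satisfying the Lie $2$-cocycle identity $\sigma([x,y],z)+\sigma([z,x],y)+\sigma([y,z],x)=0$. That covers the two displayed identities (which are $(*)$ with $u,v$ held fixed), the skew-symmetries of $F$ and $G$, the pair swap $F(a,b,c,d)=-F(c,d,a,b)$, and Jacobi.

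Every coboundary $\sigma(p,q)=\lambda([p,q])$, with $\lambda$ a linear functional on $R$, satisfies that identity by Jacobi. For it, $F(a,b,c,d)=\lambda([[a,b],[c,d]])$. In $R=\mathrm{M}_2(\mathbb{F})$ one has $[[E_{12},E_{21}],[E_{11},E_{12}]]=2E_{12}$, so taking $\lambda$ to be the $(1,2)$-entry gives $F\neq0$. Hence no combination of your relations can yield $2F=0$ or $4F=0$. The lemma genuinely needs $x_5,x_6$ to enter the cyclic sums: it uses that $(*)$ holds for all $u,v$, together with alternation across all four slots. Rewriting $(*)$ ``in another slot'' gives nothing new by alternation; what matters is which arguments form the cyclic triple. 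Also, your $G'$ is never actually defined.

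Your last paragraph points toward the right move, but you leave it as a contingency, and carrying it out is the whole proof. The paper does it with no double commutators at all. Put $c_{ij}=[x_i,x_j]$ and $q_{ij\mid kl}=\tau(c_{ij},c_{kl},x_m,x_n)$, with $m<n$ the complementary indices. Place $c_{12}$ in a passive slot and apply $(*)$ to the triples $(x_3,x_4,x_5)$, $(x_3,x_4,x_6)$, $(x_3,x_5,x_6)$, $(x_4,x_5,x_6)$, with the leftover variable in the other passive slot. Every term is then of the form $\pm q_{12\mid kl}$. Eliminating $q_{12\mid 35},q_{12\mid 45},q_{12\mid 46}$ from the four relations gives $2(q_{12\mid34}-q_{12\mid56})=0$. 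Relabeling, together with $q_{kl\mid ij}=-q_{ij\mid kl}$, yields $q_{12\mid34}=q_{12\mid56}=-q_{56\mid12}=-q_{56\mid34}=q_{34\mid56}=q_{34\mid12}=-q_{12\mid34}$, so $q_{12\mid34}=0$. Neither $G$ nor the Jacobi identity is needed; what is needed is letting the plain variables $x_5,x_6$ move into the cyclic triples.
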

\begin{proof}
Fix $x_1,\ldots,x_6\in R$ and put $c_{ij}=[x_i,x_j]$. For $i<j$ and $k<l$ with $\{i,j\}\cap\{k,l\}=\varnothing$, let $m<n$ be the complementary indices and set
$$q_{ij\mid kl}=\tau(c_{ij},c_{kl},x_m,x_n).$$
Alternation gives
\begin{equation}\label{eq:q-skew}
q_{kl\mid ij}=-q_{ij\mid kl}.
\end{equation}
Fixing the pair $12$, apply \textup{($*$)} successively to the triples $(x_3,x_4,x_5)$, $(x_3,x_4,x_6)$, $(x_3,x_5,x_6)$ and $(x_4,x_5,x_6)$, with $c_{12}$ and the remaining variable in the last two slots. 
After reordering by alternation, we obtain
\begin{align*}
q_{12\mid34}-q_{12\mid35}+q_{12\mid45}&=0,\\
-q_{12\mid34}-q_{12\mid36}+q_{12\mid46}&=0,\\
-q_{12\mid35}+q_{12\mid36}+q_{12\mid56}&=0,\\
-q_{12\mid45}+q_{12\mid46}-q_{12\mid56}&=0.
\end{align*}
Eliminating $q_{12\mid35}$, $q_{12\mid45}$, $q_{12\mid46}$ gives $2(q_{12\mid34}-q_{12\mid56})=0$. Since $\operatorname{char}\mathbb{F} \neq 2$,
\begin{equation}\label{eq:pair-12}
q_{12\mid34}=q_{12\mid56}.
\end{equation}
Relabeling the six variables gives also
$q_{34\mid12}=q_{34\mid56}$ and $q_{56\mid12}=q_{56\mid34}$. If $q=q_{12\mid34}$, then \eqref{eq:pair-12} and \eqref{eq:q-skew} give
$$q=q_{12\mid56}=-q_{56\mid12}=-q_{56\mid34}=q_{34\mid56}=q_{34\mid12}=-q,$$
so $q=0$. This is the desired identity.
\end{proof}

\begin{corollary}\label{cor:two-comm}
Assume that $A=\Alg \bigl([[A,A],[A,A]] \bigr)$. Then the formula $$a(c,x)b(d,y)=b(c,x)a(d,y)=0$$ holds for all $c,d\in[A,A]$ and all $x,y\in A$.
\end{corollary}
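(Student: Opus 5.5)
The plan is to apply \Cref{lem:six} to the alternating quadrilinear map $P$ of \Cref{cor:P-alt}, and then use \Cref{prop:cross} to pass from $a\,b$ to $b\,a$.

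First, take $R=A$, $E=B$ and $\tau=P$, where $P(x,y,z,w)=a(x,y)b(z,w)$. By \Cref{cor:P-alt}, $P$ is alternating quadrilinear and satisfies the commutator identity $(*)$. So \Cref{lem:six} gives
$$P([x_1,x_2],[x_3,x_4],x_5,x_6)=0\quad\text{for all } x_1,\ldots,x_6\in A.$$
The statement concerns $c,d\in[A,A]$, which are finite sums of commutators. Since $P$ is multilinear, it suffices to treat $c=[x_1,x_2]$ and $d=[x_3,x_4]$.

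Second, alternation lets us place the arguments in any order up to sign:
$$a(c,x)b(d,y)=P(c,x,d,y)=-P(c,d,x,y)=0.$$
The last expression vanishes by the displayed consequence of \Cref{lem:six}, with $x_5=x$ and $x_6=y$.

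Third, for the other product, use the last assertion of \Cref{prop:cross}, namely $a(x,y)b(z,w)=b(x,y)a(z,w)$ for all arguments. This gives $b(c,x)a(d,y)=a(c,x)b(d,y)=0$, which completes the statement.

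There is no real obstacle here; all the difficulty was absorbed in \Cref{lem:six}. The only points needing care are the sign bookkeeping when permuting arguments of $P$ and the reduction from spans of commutators to single commutators, which follows from linearity.
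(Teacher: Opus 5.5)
Your proposal is correct and matches the paper's proof step for step. You apply \Cref{lem:six} to the map $P$ of \Cref{cor:P-alt}, swap the two middle arguments using alternation, extend from single commutators to all of $[A,A]$ by multilinearity, and then use the last identity of \Cref{prop:cross} to get $b(c,x)a(d,y)=a(c,x)b(d,y)$.
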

    \begin{proof}
    In view of \Cref{lem:six}, the cyclic identity required there is precisely the one established for the quadrilinear map $P$ in \Cref{cor:P-alt}. Hence $P(c_0,d_0,x,y)=0$ for all commutators $c_0,d_0$ in $A$ and all $x,y\in A$. Since $P$ is alternating, interchanging the two middle arguments gives $P(c_0,x,d_0,y)=0$. By bilinearity in the commutator entries, this vanishing extends to all $c,d\in[A,A]$, hence $a(c,x)b(d,y)=0$. According to the last identity in \Cref{prop:cross}, $b(c,x)a(d,y)=a(c,x)b(d,y)=0$, as required. 
    \end{proof}

Let $A$ be an algebra and write $A^{\sharp}$ for its unitization \cite[Section~2.3]{Bresar2014}. For $L\subseteq A$, the two-sided ideal generated by $L$ is $\mathbb{F}$-spanned by $Tx S,$  where $x \in L$ and  $T,S\in A^\sharp$. Thus, to pass from vanishing on the individual defect generators to vanishing on the ideals they generate, we must control products with arbitrary elements inserted between generators. The following lemma provides precisely this sandwich relation.
\begin{lemma}\label{lem:sandwich-comm}
Assume that $A=\Alg \bigl([[A,A],[A,A]] \bigr)$. Then the formula
\begin{equation*}
b(c,x)Ta(d,y)=a(c,x)Tb(d,y)=0  \eqno{\textup{($**$)}}
\end{equation*}
holds for all $c,d\in [A,A]$, $x,y\in A$ and $T\in \Alg(\varphi(A))^{\sharp}$.
\end{lemma}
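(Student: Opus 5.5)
We argue by induction on the length of $T$. By linearity it suffices to treat $T=1$ and monomials $T=\varphi(z_1)\cdots\varphi(z_n)$ with $z_i\in A$, since $\Alg(\varphi(A))^{\sharp}$ is spanned by these. We induct on $n$ and prove both halves of ($**$) simultaneously. The base case $n=0$, i.e.\ $T=1$, is exactly \Cref{cor:two-comm}.

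For the inductive step, write $T=\varphi(z)T'$, where $T'$ is a monomial of length $n-1$. The idea is to absorb the leftmost factor $\varphi(z)$ into the left defect, using the relative Leibniz formulas of \Cref{lem:Leibniz}. The first argument of the defect stays equal to the commutator $c$, so the induction hypothesis still applies.

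Concretely, the second formula of \Cref{lem:Leibniz}, applied with $(y,z,x)$ replaced by $(c,x,z)$, gives
\[
b(c,xz)=\varphi(x)\,b(c,z)+b(c,x)\,\varphi(z),
\]
so that $b(c,x)\varphi(z)=b(c,xz)-\varphi(x)b(c,z)$. Therefore
\[
b(c,x)\,\varphi(z)T'\,a(d,y)=b(c,xz)\,T'\,a(d,y)-\varphi(x)\,b(c,z)\,T'\,a(d,y).
\]
Both terms on the right vanish by the induction hypothesis applied to $T'$. This is where it matters that $x,y$ range over all of $A$ while only $c,d\in[A,A]$ is required. Similarly, the first formula of \Cref{lem:Leibniz} with $(y,z,x)$ replaced by $(c,z,x)$ gives $a(c,zx)=\varphi(x)a(c,z)+a(c,x)\varphi(z)$. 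Hence
\[
a(c,x)\varphi(z)T'b(d,y)=a(c,zx)T'b(d,y)-\varphi(x)a(c,z)T'b(d,y)=0,
\]
again by induction.

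The only delicate point is the bookkeeping. We must choose the right instance of each Leibniz formula so that the element $\varphi(z)$ ends up as a right multiplication of the defect whose first argument is the fixed commutator $c$. Moving $\varphi(z)$ onto $a(d,y)$ from the left instead would be equally possible. The chosen direction keeps the induction a clean statement about $T'$, with no leftover terms of mixed type. Linearity in $T$ then completes the proof of ($**$).
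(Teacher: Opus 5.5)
Your argument is correct and is essentially the paper's proof: both induct on the word length of $T$, take \Cref{cor:two-comm} as the base case, and use \Cref{lem:Leibniz} to absorb one letter $\varphi(z)$ into a defect whose commutator argument stays fixed. The only difference is the side. The paper writes $T=S\varphi(z)$ and uses $\varphi(z)a(d,y)=a(d,yz)-a(d,z)\varphi(y)$ (and the analogous identity for $b$) to push the last letter into the right-hand defect, whereas you peel off the first letter into the left-hand defect; both directions are equally clean, so neither one avoids mixed terms that the other would produce.
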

    \begin{proof}
     Since $\Alg(\varphi(A))^{\sharp}$ is spanned by $1$ together with the nonempty words in $\varphi(A)$, linearity reduces the proof to these elements. We induct on word length, with $1$ as the word of length $0$. By \Cref{cor:two-comm}, \textup{($**$)} holds in case where $T=1$. Assume that $T$ can be written as a word $SZ$ of positive length, where $Z=\varphi(z)$ and $S$ is shorter. By the induction hypothesis, \textup{($**$)} holds with $S$ in place of $T$. By \Cref{lem:Leibniz},
    $Z a(d,y) = a(d,yz) - a(d,z) \varphi(y).$
    ~Hence
    $$\begin{aligned}
    b(c,x) SZ a(d,y) = b(c,x) S a(d,yz) - b(c,x) S a(d,z) \varphi(y) 
    = 0
    \end{aligned}$$
    by the induction hypothesis. Similarly, \Cref{lem:Leibniz} also gives
    $$Z b(d,y) = b(d,zy) - b(d,z) \varphi(y),$$
    and therefore $a(c,x) S Z b(d,y) = 0$ again by the induction hypothesis.
    \end{proof}

To extend the identities from commutator entries to the full defect ideals, we show that each defect ideal is generated by defect values having a commutator in one argument.

\begin{proposition}\label{prop:defect-gen}
    Assume that $A=\Alg \bigl([[A,A],[A,A]] \bigr)$. Then the following statements hold.
\begin{enumerate}[label=\textup{(\arabic*)}]
    \item The defect ideal $V_{\varphi}$ is generated by all elements of the form $a(c,x)$, where $c\in[A,A]$ and $x\in A$.
    \item The defect ideal $W_{\varphi}$ is generated by all elements of the form $b(c,x)$, where $c\in[A,A]$ and $x\in A$.
\end{enumerate}
\end{proposition}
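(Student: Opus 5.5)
The plan is to compare each defect ideal with the smaller ideal generated by the commutator-type defect values, and to show that every defect value already lies in the smaller ideal. Let $V'_{\varphi}$ be the two-sided ideal of $\Alg(\varphi(A))$ generated by the elements $a(c,x)$ with $c\in[A,A]$ and $x\in A$. Clearly $V'_{\varphi}\subseteq V_{\varphi}$. For the reverse inclusion it suffices to show that $a(u,v)\in V'_{\varphi}$ for all $u,v\in A$.

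By \Cref{lem:domain-structure}(1) we have $A=\Alg([A,A])$. By bilinearity of $a$ we may therefore take $u=c_1c_2\cdots c_n$ to be a product of commutators $c_i\in[A,A]$ with $n\ge1$. We argue by induction on $n$, proving that $a(v,u)\in V'_{\varphi}$ for every $v\in A$; skew-symmetry then gives $a(u,v)=-a(v,u)\in V'_{\varphi}$.

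For $n=1$ we have $a(v,c_1)=-a(c_1,v)$, which is a generator of $V'_{\varphi}$. For $n\ge2$, write $u=zx$ with $z=c_1\cdots c_{n-1}$ and $x=c_n$. The first formula of \Cref{lem:Leibniz}, with $y=v$, gives
$$a(v,zx)=\varphi(x)\,a(v,z)+a(v,x)\,\varphi(z).$$
On the right-hand side, $a(v,z)\in V'_{\varphi}$ by the induction hypothesis, and $a(v,x)=-a(c_n,v)\in V'_{\varphi}$. Both terms are multiplied by elements of $\varphi(A)\subseteq\Alg(\varphi(A))$, and $V'_{\varphi}$ is an ideal of that algebra. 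Hence $a(v,u)\in V'_{\varphi}$, which completes the induction and proves part (1).

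Part (2) follows in the same way from the second formula of \Cref{lem:Leibniz}:
$$b(v,zx)=\varphi(z)\,b(v,x)+b(v,z)\,\varphi(x).$$
The same induction shows that every $b(u,v)$ lies in the ideal generated by the values $b(c,x)$ with $c\in[A,A]$ and $x\in A$.

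There is no serious obstacle; the argument only uses earlier results. The one point needing care is that the Leibniz formulas apply to the second argument of the defect. This is why we induct on $a(v,u)$ rather than on $a(u,v)$, and then use skew-symmetry both to return to $a(u,v)$ and to put the single commutator $c_n$ into the first slot, where it matches the form of the generators.
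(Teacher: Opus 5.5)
Your proposal is correct and matches the paper's proof: both use $A=\Alg([A,A])$ from \Cref{lem:domain-structure} to reduce to words $c_1\cdots c_n$ in commutators, and both induct on $n$ by applying the two formulas of \Cref{lem:Leibniz} to $(c_1\cdots c_{n-1})c_n$ in the second argument. The final appeal to skew-symmetry is harmless but unnecessary, since $a(v,u)$ with $v$ arbitrary and $u$ running over a spanning set already covers every defect value.
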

    \begin{proof}
     Let $I$ denote the two-sided ideal generated by the elements $a(c,x)$ in which $c\in[A,A]$ and $x\in A$. Clearly, $I\subseteq V_{\varphi}$.

    By \Cref{lem:domain-structure}, $A = \Alg([A,A])$.
    Hence every element of $A$ is a linear combination of words $c_1 \cdots c_n$ in which $c_i \in [A,A]$.
    By bilinearity in the second variable, it suffices to show $a(x,c_1 \cdots c_n) \in I$ for arbitrary $x \in A$. We argue by induction on $n$. In case where $n=1$, 
    $a(x,c_1)=-a(c_1,x)\in I.$
    Assume $n\ge2$ and put $r=c_1\cdots c_{n-1}$. The first formula in \Cref{lem:Leibniz} gives
    $$a(x,rc_n)=\varphi(c_n)a(x,r)+a(x,c_n)\varphi(r).$$
    The first term lies in $I$ by the induction hypothesis, while the second does so because $a(x,c_n) = -a(c_n,x)$ is, up to sign, one of the generators. Hence every defect value $a(x,y)$ lies in $I$, so $V_{\varphi} \subseteq I$. Thus, $I = V_{\varphi}$, which proves~\textup{(1)}.

    \medskip
    Part \textup{(2)} follows by the same argument. If $J$ is the ideal generated by the $b(c,x)$, then the second formula in \Cref{lem:Leibniz} gives, for $r = c_1 \cdots c_{n-1}$,
    $$b(x, r c_n) = \varphi(r) b(x,c_n) + b(x,r) \varphi(c_n),
    $$
    and the same induction using $b(x,c_n) = -b(c_n,x)$ yields $J = W_{\varphi}$.
    \end{proof}

We now prove the main structural theorem from the Introduction.

\begin{proof}[Proof of \thmAref]
 According to \Cref{prop:defect-gen}, $V_{\varphi}$ is generated as a two-sided ideal of $\Alg(\varphi(A))$ by the elements $a(c,x)$, and $W_{\varphi}$ by the elements $b(d,y)$, where $c,d\in[A,A]$ and $x,y\in A$. Hence it suffices to consider, for $T,S,T',S' \in \Alg(\varphi(A))^{\sharp}$,
$$\bigl(Ta(c,x)S\bigr)\bigl(T'b(d,y)S'\bigr)=T\,a(c,x)(ST')b(d,y)\,S',$$
where the equality holds by associativity. Since $ST' \in \Alg(\varphi(A))^{\sharp}$,  it follows from \Cref{lem:sandwich-comm}  that $a(c,x)(ST')b(d,y)=0$, hence the product is~$0$. Interchanging $a$ and $b$ gives  $0$ for the reverse product as well. Therefore,
$$V_{\varphi}W_{\varphi}=W_{\varphi}V_{\varphi}=0.$$
By \Cref{lem:domain-structure}, the hypothesis gives $A = K(A)$, hence \Cref{prop:comaximal}~yields
$\Alg(\varphi(A)) = V_{\varphi} + W_{\varphi}.$
Let $q\in V_{\varphi}\cap W_{\varphi}$ be arbitrary and write $T\in\Alg(\varphi(A))$ as $T=v+w$, where $v\in V_{\varphi}$, $w\in W_{\varphi}$. Then the orthogonality of the two defect ideals established above yields $qT=Tq=0$, and hence $q\in\Ann(\Alg(\varphi(A)))$. Consequently, $V_{\varphi}\cap W_{\varphi}\subseteq\Ann(\Alg(\varphi(A)))$, which completes the proof.
\end{proof}

\section{Sharpness of the annihilator bound}\label{sec:sharpness}

In general, the annihilator conclusion in \thmAref cannot be strengthened to
$V_{\varphi}\cap W_{\varphi}=0.$
We now prove \thmBref\ by constructing finite-dimensional algebras and a Jordan homomorphism between them that attains the bound in \thmAref, so sharpness already occurs in finite dimension. The construction is a finite-dimensional version of the annihilator-extension method of Bre\v{s}ar and Zelmanov \cite[Proposition~3.10, Lemma~3.11 and Proposition~3.12]{BresarZelmanov2026}. The method starts from a $2$-cocycle, passes to an antisymmetric cocycle on $A\oplus A^{\mathrm{op}}$, and then forms a one-dimensional annihilator extension carrying the diagonal Jordan map. We use the same construction for a different purpose, choosing $A$ finite-dimensional with $A=\Alg([[A,A],[A,A]])$ so that the common defect equals $V_{\varphi}\cap W_{\varphi}=\Ann(\Alg(\varphi(A)))$, both sides one-dimensional.

\smallskip
Let us fix matrix notation. Let $R$ be an associative $\mathbb{F}$-algebra. For $r\in R$ and $1\le i,j\le n$, denote by $E_{ij}(r)$ the matrix with $r$ in the $(i,j)$-entry and $0$ elsewhere. If $R$ is unital with identity $1$, we write $E_{ij}$ for $E_{ij}(1)$.

\begin{proof}[Proof of \thmBref]
    Let $\Gamma =\{\omega,\mathbf a,\mathbf b,e\}$ be the semigroup consisting of four elements whose multiplication is determined by $e^2=e$, $e\mathbf a=\mathbf a$, $\mathbf b e=\mathbf b$, with all other products in $\Gamma$ equal to $\omega$. This semigroup is inspired by~\cite[Example~2.3]{LaanMarkiReimaa2018}, whose zero element is relabeled as $\omega$ here. We emphasize that $\omega$ is a basis element
    of $\mathbb F\Gamma$, not the zero vector
    of the semigroup algebra.
    
    Let $R=\mathbb{F} \Gamma $ denote the semigroup algebra with basis $\Gamma $ over $\mathbb{F}$. Then $R$ is the four-dimensional $\mathbb{F}$-vector space, 
    and its product is the $\mathbb{F}$-bilinear extension of that on $\Gamma $.   
    Associativity now follows from that of the underlying semigroup.  Moreover, by the defining relations and $e\mathbf{b}=\omega$, each basis element of $R$ is a product of two elements of $\Gamma $, so $R^2=R$. Note also that $R$ is nonunital. Indeed, any identity must be $e$, but $e\mathbf{b} = \omega \neq \mathbf{b}$.

    Define a bilinear map $f \colon R\times R\to\mathbb{F}$ by $f(\mathbf b, \mathbf a)=1$ and $f=0$ on all other ordered pairs of basis elements. With this definition, the balancing identity $$f(rs,t)=f(r,st)$$ holds for all $r,s,t\in R$. Indeed, since both sides are trilinear in $r,s,t$, it suffices to verify the identity on triples of basis elements. 
    Both $f(rs,t)$ and $f(r,st)$ vanish unless 
    $(r,s,t)=(\mathbf b,e,\mathbf a)$, corresponding to 
    $rs=\mathbf b$, $t=\mathbf a$ and $r=\mathbf b$, $st=\mathbf a$, respectively. Thus, the two sides coincide on all ordered triples of basis elements, which proves the claim. Put $A=\mathrm{M}_3(R).$ Clearly, $A$ is finite-dimensional over $\mathbb{F}$.

\medskip

\textup{(1)}
According to the hypothesis $R=R^2$, every element $r\in R$ is a finite sum $r=\sum_{\nu}u_{\nu}v_{\nu}$ with $u_{\nu}, v_{\nu} \in R$.  Let $i \neq j$. Since we are working in $\mathrm{M}_3(R)$, we can choose an index $k$ distinct from  both $i$ and $j$. Then
$$E_{ij}(r)=\sum_{\nu}[E_{ik}(u_{\nu}),E_{kj}(v_{\nu})].$$
This implies that $E_{ij}(r) \in [A,A]$. In particular, $E_{ik}(u_{\nu}), E_{kj}(v_{\nu}) \in [A,A]$. Hence
$E_{ij}(r) \in [[A,A],[A,A]].$
In addition, we also have
$$E_{ii}(r)=\sum_{\nu}E_{ij}(u_{\nu})E_{ji}(v_{\nu}).$$
Both factors in each product lie in $[[A,A],[A,A]]$. Hence $E_{ii}(r)$ belongs to the subalgebra generated by this space. Therefore, $A=\Alg([[A,A],[A,A]])$.

\medskip

\textup{(2)} We proceed in the following steps.

\smallskip
\noindent\textit{Step 1.}
 Given $X=(x_{ij}),$ $Y=(y_{ij})\in A$. To transfer the balancing identity satisfied by $f$ on $R$ to $A$,
we define $H \colon A \times A \to \mathbb{F}$ by
$$H(X,Y) = \sum_{i,j=1}^{3} f(x_{ij}, y_{ji}).$$
With $Z=(z_{ij})\in A$, the balancing identity for $f$ established above gives
$$\begin{aligned}
H(XY,Z)
&=
\sum_{i,j,k}
f(x_{ik}y_{kj},z_{ji})\\
&=
\sum_{i,j,k}
f(x_{ik},y_{kj}z_{ji})\\
&=
H(X,YZ).
\end{aligned}$$
Put $D=A\oplus A^{\mathrm{op}}$ and consider the map 
$\widetilde H\colon D\times D\to\mathbb{F}$ defined by
$$\widetilde H\bigl(X\oplus Y^{\mathrm{op}},\,X'\oplus {Y'}^{\mathrm{op}}\bigr)=H(X,X')-H(Y',Y).$$
Write $d_i = X_i \oplus Y_i^{\mathrm{op}}$. Then
$$\begin{aligned}
\widetilde{H}(d_1 d_2, d_3) &= H(X_1 X_2, X_3) - H(Y_3, Y_2 Y_1) \\
&= H(X_1, X_2 X_3) - H(Y_3 Y_2, Y_1) \\
&= \widetilde{H}(d_1, d_2 d_3),
\end{aligned}$$
which is the balancing identity for $\widetilde{H}$.
View $B=D\oplus\mathbb{F}$ as a direct sum of $\mathbb{F}$-vector spaces with multiplication
$$(d,\alpha)(d',\beta)=\bigl(dd',\widetilde H(d,d')\bigr).$$
Since $\dim_{\mathbb{F}}D<\infty$, it follows that $B$ is finite-dimensional. Now take $b_i\in B$ and write $b_i=(d_i,\alpha_i)$ with $d_i\in D$, $\alpha_i\in\mathbb{F}$. By the balancing identity for $\widetilde{H}$,
\begin{equation*}
\begin{aligned}
(b_{1}b_{2})b_{3} &= \bigl(d_{1}d_{2}d_{3},\widetilde{H}(d_{1}d_{2},d_{3})\bigr) \\
&= \bigl(d_{1}d_{2}d_{3},\widetilde{H}(d_{1},d_{2}d_{3})\bigr) \\
&= b_{1}(b_{2}b_{3}).
\end{aligned}
\end{equation*}
Therefore, the multiplication of $B$ is associative. 

\medskip
\noindent\textit{Step 2.} Identifying $B=(A\oplus A^{\mathrm{op}})\oplus\mathbb{F}$ with $A\oplus A^{\mathrm{op}}\oplus\mathbb{F}$ as vector spaces, we may write its elements as triples.
Consider the map $\varphi\colon A\to B$ defined by 
$\varphi(x)=(x,x^{\mathrm{op}},0)$ and put 
$\Delta(x,y)=H(x,y)-H(y,x)$. It is straightforward to check that
$\varphi(x)\varphi(y)=\bigl(xy,(yx)^{\mathrm{op}},\Delta(x,y)\bigr).$
Since $\Delta(y,x)=-\Delta(x,y)$, the scalar parts cancel upon symmetrization, so
$$\varphi(x)\varphi(y)+\varphi(y)\varphi(x)=\varphi(xy+yx).$$
Thus, $\varphi$ is a Jordan homomorphism, with defects
$$\begin{aligned}
a(x,y)&=\bigl(0,[x,y]^{\mathrm{op}},-\Delta(x,y)\bigr),\\
b(x,y)&=\bigl([x,y],0,\Delta(x,y)\bigr).
\end{aligned}$$
Now take
$x_0=E_{11}(\mathbf b)$,
$y_0=E_{11}(\mathbf a).$
Since $\mathbf b\mathbf a=\mathbf a\mathbf b=\omega$, we have $[x_0,y_0]=0$, while $\Delta(x_0,y_0)=f(\mathbf b,\mathbf a)-f(\mathbf a,\mathbf b)=1$.
Consequently, $$b(x_0,y_0)=(0,0,1)=-a(x_0,y_0).$$
Therefore, the element $(0,0,1)$ lies in $V_{\varphi}\cap W_{\varphi}$, so 
$0\oplus0\oplus\mathbb{F}\subseteq V_{\varphi}\cap W_{\varphi}$.

\medskip
\noindent\textit{Step 3.}
Put $C=\Alg(\varphi(A))$. Consider the canonical projection $\pi\colon B\to D$, $(d,\alpha)\mapsto d$. The two defect identities in \textit{Step 2} show that the elements 
$([x,y],0,0)$ and $(0,[x,y]^{\mathrm{op}},0)$ belong to $C$. Hence 
$[A,A]\oplus[A,A]^{\mathrm{op}}\subseteq\pi(C).$ 
As a consequence of \Cref{lem:domain-structure}, the opposite algebra $A^{\mathrm{op}}$ is generated by $[A,A]^{\mathrm{op}}.$
 It follows that  $\pi(C)$ contains both $A\oplus 0$ and $0\oplus A^{\mathrm{op}}$, hence $$\pi(C)=D.$$
From this, we imply that  $\pi(V_{\varphi})$ is the two-sided ideal of $D$ generated by elements $\pi(a(x,y))=0\oplus[x,y]^{\mathrm{op}}$ with $x,y\in A$. Thus, $\pi(V_\varphi)=0\oplus K(A)^{\mathrm{op}}$, and similarly we also have
$\pi(W_\varphi)=K(A)\oplus0$. 

By part~(1) together with 
\Cref{lem:domain-structure}, $K(A)=A$, so $\pi(V_\varphi)=0\oplus 
A^{\mathrm{op}}$ and $\pi(W_\varphi)=A\oplus0$. Moreover, $\pi^{-1}(\pi(V_\varphi))=V_\varphi+\ker\pi$, and similarly 
for $W_\varphi$. As shown in \emph{Step 2}, $\ker\pi=0\oplus0\oplus\mathbb{F}\subseteq V_{\varphi}\cap W_{\varphi}$, so $V_{\varphi}=0\oplus A^{\mathrm{op}}\oplus\mathbb{F}$ and $W_{\varphi}=A\oplus0\oplus\mathbb{F}$. In particular, $V_\varphi\cap 
W_\varphi=0\oplus0\oplus\mathbb{F}$.

\medskip
\noindent\textit{Step 4.}
Let $r=\alpha\omega+\beta\mathbf{a}+\gamma\mathbf{b}+\delta e\in
\Ann(R)$. Since  $er=re=0$, we get $r=0$, so $\Ann(R)=0$. Next, let 
$X=(x_{ij})\in\Ann(A)$. For $r\in R$, multiplying $X$ by elementary matrices on the right and 
left gives $x_{ij}r=rx_{ij}=0$ for all $i,j$, so 
$x_{ij}\in\Ann(R)=0$. This yields $X=0$, and thereby $\Ann(A)=0$. Moreover, since 
$D=A\oplus A^{\mathrm{op}}$, we have $\Ann(D)=0$.
Now take $(d,\alpha)\in\Ann(B)$. Multiplying by $(d',0)$ on the left and 
right and comparing the $D$-component, we obtain $d\in\Ann(D)$. Thus, $d=0$, and so 
$$\Ann(B)=0\oplus0\oplus
\mathbb{F}.$$
It remains to identify the generated range.
We have shown that the map~$\pi$ defined in \textit{Step 3} satisfies 
$\pi(C)=D$. Hence for each $d\in D$, there exists
$\alpha\in\mathbb{F}$ such that $(d,\alpha)\in C$.  Thus, $C$ contains
$D\oplus0$, and thereby $C=B.$
Combining this with the description of the defect ideals obtained
above, we imply that
$V_\varphi\cap W_\varphi
=
\Ann(C)
=
0\oplus0\oplus\mathbb{F}
\neq 0.$
In particular,
\[
\dim_{\mathbb{F}}(V_\varphi\cap W_\varphi)=1,
\]
which proves part~\textup{(2)}.
    \end{proof}

\begin{remark}\label{rem:sharpness}
By \thmBref, even when
$A = \Alg([[A,A],[A,A]]),$
the defect intersection can coincide with the entire annihilator of the algebra generated by the image. Hence the annihilator term in
\thmAref cannot be omitted in general. In particular, $A = \Alg([[A,A],[A,A]])$ alone does not force a Jordan homomorphism
to be splittable.
\end{remark}

\section{Consequences for the defect intersection}\label{sec:consequences}

 According to \thmAref, $V_{\varphi}\cap W_{\varphi}\subseteq \Ann(\Alg(\varphi(A))),$ and \Cref{prop:standard-mod-Q} shows that the Jordan homomorphism induced modulo $V_{\varphi}\cap W_{\varphi}$ is standard. Consequently, this decomposition descends to the annihilator quotient, and hence we obtain the following.

\begin{corollary}\label{cor:ann-standard}
Assume that $A=\Alg \bigl([[A,A],[A,A]] \bigr)$. Then any Jordan homomorphism $\varphi\colon A\to B$ is annihilator-by-standard.
\end{corollary}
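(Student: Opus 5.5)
The plan is to show that the standard decomposition of $\varphi_Q$ from \Cref{prop:standard-mod-Q} descends along a further quotient. Put $C=\Alg(\varphi(A))$, $Q=V_{\varphi}\cap W_{\varphi}$ and $N=\Ann(C)$. By \Cref{lem:domain-structure}(1) the hypothesis gives $A=K(A)$, so \Cref{prop:standard-mod-Q} applies: $\varphi_Q=\varphi_++\varphi_-$ is standard as a map into $C/Q$. By \thmAref we have $Q\subseteq N$, so there is a canonical surjective algebra homomorphism
$$\rho\colon C/Q\to C/N,\qquad T+Q\mapsto T+N.$$

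Next I would verify that standardness is preserved under composition with an algebra homomorphism. If $\psi=\psi_++\psi_-$ is standard and $\rho$ is an algebra homomorphism, then $\rho\circ\psi_+$ is a homomorphism and $\rho\circ\psi_-$ is an antihomomorphism. The orthogonality relations $\rho\psi_+(A)\,\rho\psi_-(A)=\rho\bigl(\psi_+(A)\psi_-(A)\bigr)=0$ and the symmetric one both survive. Applying this to $\psi=\varphi_Q$ gives that
$$\rho\circ\varphi_Q\colon A\to C/N,\qquad x\mapsto\varphi(x)+N,$$
is the sum of a homomorphism and an antihomomorphism with mutually annihilating ranges. This map is exactly the induced Jordan homomorphism in the definition of annihilator-by-standard, so it is standard, and hence $\varphi$ is annihilator-by-standard.

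There is no serious obstacle here. The only points needing care are confirming that $Q\subseteq N$, which is precisely the annihilator inclusion of \thmAref, so that $\rho$ is well defined, and checking that the induced map in the definition coincides with $\rho\circ\varphi_Q$, which follows from the definitions.
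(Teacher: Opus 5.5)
Your proposal is correct and follows essentially the same route as the paper's proof. Both obtain $A=K(A)$ from \Cref{lem:domain-structure}, take the standard decomposition of $\varphi_Q$ from \Cref{prop:standard-mod-Q}, use $Q\subseteq\Ann(\Alg(\varphi(A)))$ from \thmAref to define $\rho$, and observe that composing with an algebra homomorphism preserves the homomorphic and antihomomorphic parts and their mutual annihilation.
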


\begin{proof}
Put $N=\Ann(\Alg(\varphi(A)))$ and $Q=V_{\varphi} \cap W_{\varphi}$. According to  \thmAref, $Q\subseteq N$, so that
passing to quotients yields a canonical algebra homomorphism
$$\rho \colon \Alg(\varphi(A))/Q \twoheadrightarrow \Alg(\varphi(A))/N.$$
On the other hand, \Cref{lem:domain-structure} gives $A=K(A)$. Hence  the induced Jordan homomorphism $\varphi_Q\colon A \to \Alg(\varphi(A))/Q$ is standard by \Cref{prop:standard-mod-Q}. Namely, $\varphi_{Q}=\varphi_{+}+\varphi_{-}$, where $\varphi_{+}$ is a homomorphism, $\varphi_{-}$ is an antihomomorphism and $\varphi_{+}(A)\varphi_{-}(A)=\varphi_{-}(A)\varphi_{+}(A)=0$. So $\rho\circ\varphi_{+}$ and $\rho\circ\varphi_{-}$ retain the same type and satisfy $(\rho\circ\varphi_{+})(A)(\rho\circ\varphi_{-})(A)=(\rho\circ\varphi_{-})(A)(\rho\circ\varphi_{+})(A)=0$.
Consequently, $\rho \circ \varphi_{Q}\colon A\to \Alg(\varphi(A))/N$ is standard. As it is precisely the Jordan homomorphism induced by $\varphi_{N} \colon A\to \Alg(\varphi(A))/N$ with $\varphi_N(x)=\varphi(x) +N$, the desired conclusion follows.
\end{proof}

By \thmAref, the intersection of the defects lies in the annihilator of the generated range. The results below can be compared with the classical results \cite{BaxterMartindale1979,Herstein1956,Smiley1957} on Jordan homomorphisms in the semiprime and prime~cases.
 
\begin{corollary}\label{cor:semiprime-prime}
Assume that $A=\Alg \bigl([[A,A],[A,A]] \bigr)$. Let $\varphi\colon A\to B$ be a Jordan homomorphism.
Then the following statements hold.
    \begin{enumerate}[label=\textup{(\arabic*)}]
    \item If $\Ann(\Alg(\varphi(A)))=0$, then $\varphi$ is standard. In particular, this
    holds whenever $\Alg(\varphi(A))$ is semiprime.

    \item If $\Alg(\varphi(A))$ is prime, then $\varphi$ is either a homomorphism or an
    antihomomorphism.
    \end{enumerate}
\end{corollary}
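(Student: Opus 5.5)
For part (1), I would assume $\Ann(\Alg(\varphi(A)))=0$. \thmAref gives $Q:=V_{\varphi}\cap W_{\varphi}\subseteq\Ann(\Alg(\varphi(A)))=0$, so $Q=0$. \Cref{lem:domain-structure} gives $A=K(A)$, so \Cref{prop:standard-mod-Q} applies with $Q=0$. The quotient map $\varphi_Q$ is then $\varphi$ itself, viewed as a map into $\Alg(\varphi(A))\subseteq B$. Hence $\varphi=\varphi_++\varphi_-$ with $\varphi_+$ a homomorphism, $\varphi_-$ an antihomomorphism, and $\varphi_+(A)\varphi_-(A)=\varphi_-(A)\varphi_+(A)=0$. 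These are products in $\Alg(\varphi(A))$, which is a subalgebra of $B$, so the same decomposition is standard when the maps are regarded as taking values in $B$.

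For the semiprime clause I would check that a semiprime algebra $C$ has $\Ann(C)=0$. If $n\in\Ann(C)$, then $nCn=0$, so semiprimeness forces $n=0$. This reduces the clause to the first statement.

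For part (2), let $C=\Alg(\varphi(A))$ be prime. Since prime implies semiprime, $\Ann(C)=0$, and the argument above gives $Q=0$. \thmAref gives $V_{\varphi}W_{\varphi}=0$, where $V_{\varphi}$ and $W_{\varphi}$ are two-sided ideals of $C$. For ideals $I,J$ of a prime algebra with $IJ=0$, we have $I\,C\,J\subseteq IJ=0$. Primeness then forces $I=0$ or $J=0$. If $V_{\varphi}=0$, every value $a(x,y)$ is zero, so $\varphi$ is a homomorphism. If $W_{\varphi}=0$, every value $b(x,y)$ is zero, so $\varphi$ is an antihomomorphism.

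The only point needing care is the convention for primeness in a possibly nonunital algebra. I would use the definition $xCy=0\Rightarrow x=0$ or $y=0$, which gives the ideal version $IJ=0\Rightarrow I=0$ or $J=0$ directly via $ICJ\subseteq IJ$. No further obstacle is expected, since everything else follows from \thmAref and \Cref{prop:standard-mod-Q}.
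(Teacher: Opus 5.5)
Your proposal is correct and follows the paper's proof. Theorem A forces $Q=V_{\varphi}\cap W_{\varphi}=0$ when the annihilator vanishes, \Cref{prop:standard-mod-Q} then gives standardness, and in the prime case the orthogonality $V_{\varphi}W_{\varphi}=0$ forces one defect ideal to be zero. The only difference is cosmetic: in part (2) you use the orthogonality from Theorem A directly, so your detour through $Q=0$ is not actually needed there. The paper instead goes through the decomposition $\Alg(\varphi(A))=V_{\varphi}\oplus W_{\varphi}$ of \Cref{prop:standard-mod-Q}(1), which tacitly relies on $Q=0$.
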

    \begin{proof}
   Put $Q=V_{\varphi} \cap W_{\varphi}$. If $\Ann(\Alg(\varphi(A)))=0$, then \thmAref implies $Q=0$, and $\varphi$ is standard by \Cref{prop:standard-mod-Q}(2). The latter statement of part~\textup{(1)} follows immediately from the semiprimeness of $\Alg(\varphi(A))$.
    
   \medskip
    For part~\textup{(2)}, we suppose that $\Alg(\varphi(A))$ is prime. \Cref{prop:standard-mod-Q}(1) gives the decomposition $\Alg(\varphi(A))=V_{\varphi}\oplus W_{\varphi}$ with $V_{\varphi}W_{\varphi}=W_{\varphi}V_{\varphi}=0$.  The primeness of $\Alg(\varphi(A))$ implies that one of the two ideals must be zero. Therefore, $\varphi$ is either a homomorphism or an antihomomorphism.
    \end{proof}

The preceding corollary gives conditions for $Q$ to vanish. Even when $Q\neq 0$, the quotient $\Alg(\varphi(A))/Q$ admits a canonical description in terms of the original algebra, refining the standard decomposition.

\begin{corollary}\label{cor:range}
Assume that $A=\Alg \bigl([[A,A],[A,A]] \bigr)$. Let $\varphi\colon A\to B$ be a Jordan homomorphism. Set $Q=V_{\varphi} \cap W_{\varphi}$,
$I=\varphi^{-1}(V_{\varphi})$, and
$J=\varphi^{-1}(W_{\varphi}).$
Then the following statements hold.
    \begin{enumerate}[label=\textup{(\arabic*)}]
    \item $I$, $J$ are two-sided ideals of $A$ and
    $I\cap J=\varphi^{-1}(Q).$
    \item There is a canonical exact sequence of $\mathbb{F}$-algebras
    $$0\to Q\to \Alg(\varphi(A))
    \xrightarrow{\;\pi\;}A/I\oplus(A/J)^{\mathrm{op}}\to 0,$$
    where
    $\pi(\varphi(x)) = \bigl(x+I,(x+J)^{\mathrm{op}}\bigr)$
    for $x\in A$.
    \item $\Ann(\Alg(\varphi(A))/Q)=\Ann(\Alg(\varphi(A)))/Q$.
    \end{enumerate}
\end{corollary}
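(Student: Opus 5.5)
The plan is to deduce everything from \Cref{prop:standard-mod-Q} and \thmAref. Write $C=\Alg(\varphi(A))$. By \Cref{lem:domain-structure} we have $A=K(A)$, so \Cref{prop:standard-mod-Q} applies and gives $C/Q=V_{\varphi}/Q\oplus W_{\varphi}/Q$ with orthogonal summands. Under this decomposition $\varphi_Q=\varphi_++\varphi_-$, where $\varphi_+(x)$ is the $W_{\varphi}/Q$-component of $\varphi(x)+Q$ and $\varphi_-(x)$ is its $V_{\varphi}/Q$-component.

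For part~(1), observe that $x\in I$ exactly when $\varphi_+(x)=0$: indeed $\varphi(x)\in V_{\varphi}$ if and only if the $W$-component of $\varphi(x)+Q$ vanishes. Thus $I=\ker\varphi_+$. Since $\varphi_+$ is a homomorphism, $I$ is a two-sided ideal. Likewise $J=\ker\varphi_-$, and since $\varphi_-$ is an antihomomorphism, its kernel is also a two-sided ideal. The identity $I\cap J=\varphi^{-1}(V_{\varphi}\cap W_{\varphi})=\varphi^{-1}(Q)$ is immediate from the definitions.

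For part~(2), define $\pi$ on all of $C$ by composing three maps. First, $C\to C/Q\cong V_{\varphi}/Q\times W_{\varphi}/Q$; this is an algebra isomorphism at the second step because the summands are orthogonal ideals. Second, identify $W_{\varphi}/Q$ with $A/I$: the map $\varphi_+$ induces an injective homomorphism $A/I\to W_{\varphi}/Q$. Third, identify $V_{\varphi}/Q$ with $(A/J)^{\mathrm{op}}$: the map $\varphi_-$ induces an injective homomorphism $(A/J)^{\mathrm{op}}\to V_{\varphi}/Q$.

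The key step, and the main obstacle, is surjectivity of these induced maps. I would argue as follows. $W_{\varphi}/Q$ is the image of $C$ under the projection $\pi_W$, which is an algebra homomorphism. Since $C$ is generated by $\varphi(A)$, the image $W_{\varphi}/Q$ is generated as an algebra by $\varphi_+(A)$. But $\varphi_+(A)$ is already a subalgebra, because $\varphi_+$ is a homomorphism. Hence $\varphi_+(A)=W_{\varphi}/Q$. The same argument gives $\varphi_-(A)=V_{\varphi}/Q$, using that the image of an antihomomorphism is closed under products.

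Composing, we obtain a surjective algebra homomorphism $\pi\colon C\to A/I\oplus(A/J)^{\mathrm{op}}$ with kernel exactly $Q$. On generators it sends $\varphi(x)$ to $(x+I,(x+J)^{\mathrm{op}})$, which is the stated formula. Canonicity holds because $\pi$ is determined by this formula on the generating set $\varphi(A)$.

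For part~(3), the inclusion $\Ann(C)/Q\subseteq\Ann(C/Q)$ makes sense because $Q\subseteq\Ann(C)$ by \thmAref, and it is clear. For the reverse inclusion, take $t\in C$ with $tC\subseteq Q$ and $Ct\subseteq Q$. Then $tC^2\subseteq QC=0$, using $Q\subseteq\Ann(C)$ again, and similarly $C^2t=0$. By \Cref{lem:domain-structure}(3) we have $C=C^2$, so $tC=Ct=0$ and $t\in\Ann(C)$. This uses the idempotence of $C$ exactly as anticipated in \Cref{sec:generated-ranges}.
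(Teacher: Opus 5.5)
Your proposal is correct and follows the paper's proof essentially step for step. You identify $I,J$ as the kernels of $\varphi_{+},\varphi_{-}$ for (1), and you get surjectivity onto $W_{\varphi}/Q$ and $V_{\varphi}/Q$ by combining generation by $\varphi(A)$ with the fact that (anti)homomorphic images are subalgebras for (2). For (3) you use $Q\subseteq\Ann(C)$ together with the idempotence $C=C^{2}$, exactly as the paper does.
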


    \begin{proof}
 By \Cref{prop:standard-mod-Q}, $\Alg(\varphi(A))/Q = V_0 \oplus W_0$ with $V_0 = V_{\varphi}/Q$ and $W_0 = W_{\varphi}/Q$. Moreover, the induced map decomposes as $\varphi_Q = \varphi_+ + \varphi_-$,
    where $\varphi_+(A) \subseteq W_0$ and $\varphi_-(A) \subseteq V_0$.

    \medskip
    For part \textup{(1)}, we observe that $\ker\varphi_{+}=\varphi^{-1}(V_{\varphi})$ and
    $\ker\varphi_{-}=\varphi^{-1}(W_{\varphi}).$
    Consequently, $I$ and $J$ are two-sided ideals of $A$, with $$I\cap J=\varphi^{-1}(V_{\varphi})\cap\varphi^{-1}(W_{\varphi})=\varphi^{-1}(Q).$$
    
    \smallskip
    For part \textup{(2)}, from the fact $\Alg(\varphi(A))/Q=\Alg(\varphi_Q(A))$, the canonical projections onto $W_{0}$ and $V_{0}$ yield $W_{0}=\Alg(\varphi_{+}(A))$ and $V_{0}=\Alg(\varphi_{-}(A))$. Since the image of an (anti)homomorphism is a subalgebra, $W_0=\varphi_{+}(A)$ and $V_0=\varphi_{-}(A)$. Then $\varphi_{+}$ and $\varphi_{-}$ induce $\mathbb{F}$-algebra isomorphisms
$\alpha\colon A/I\to W_{0}$, $\alpha(x+I)=\varphi_{+}(x)$ and
$\beta\colon (A/J)^{\mathrm{op}}\to V_{0}$, $\beta((x+J)^{\mathrm{op}})=\varphi_{-}(x)$, for $x\in A$. Define $\theta\colon \Alg(\varphi(A))/Q \to A/I\oplus (A/J)^{\mathrm{op}}$ by $v+w\mapsto \bigl(\alpha^{-1}(w),\beta^{-1}(v)\bigr)$. It is easy to verify by a routine check that $\theta(\varphi(x)+Q)=\bigl(x+I,(x+J)^{\mathrm{op}}\bigr)$.
    Now let $\pi\colon \Alg(\varphi(A))\to A/I\oplus (A/J)^{\mathrm{op}}$ be the composite of the canonical epimorphism $\Alg(\varphi(A))\to \Alg(\varphi(A))/Q$ with $\theta$. Then $\pi$ is  surjective with $\ker\pi = Q$, so the sequence
$$0 \to Q\to \Alg(\varphi(A)) \xrightarrow{\ \pi\ } A/I\oplus (A/J)^{\mathrm{op}} \to 0$$
of $\mathbb{F}$-algebras is exact and $\pi(\varphi(x))=\bigl(x+I,(x+J)^{\mathrm{op}}\bigr)$ for $x \in  A$.

    \medskip

    For part \textup{(3)}, it follows from \thmAref that $Q \subseteq \Ann(\Alg(\varphi(A)))$, and so $\Ann(\Alg(\varphi(A)))/Q \subseteq \Ann(\Alg(\varphi(A))/Q)$. Conversely, the inclusions $$T \Alg(\varphi(A))+\Alg(\varphi(A))T \subseteq Q\subseteq \Ann(\Alg(\varphi(A)))$$ hold for all $T \in \Alg(\varphi(A))$ such that $T +Q \in \Ann(\Alg(\varphi(A))/Q)$. In view of \Cref{lem:domain-structure}, we have the equality $\Alg(\varphi(A))=\Alg(\varphi(A))^2$. Thus, every $r\in\Alg(\varphi(A))$ can be written as a finite sum $r=\sum_i u_i v_i$ with $u_i,v_i\in\Alg(\varphi(A))$.
Multiplying $r$ by $T$ on the left and on the right gives $Tr=\sum_i (Tu_i)v_i=0$ and $rT=\sum_i u_i(v_i T)=0$. Hence $T\in\Ann(\Alg(\varphi(A)))$, so $\Ann(\Alg(\varphi(A))/Q)=\Ann(\Alg(\varphi(A)))/Q$, which completes the proof.
    \end{proof}

The exact sequence in \Cref{cor:range} is an annihilator extension and, in particular, a square-zero extension. We now describe this extension in terms of Hochschild cohomology.  In the final proposition of this section, we set $C=\Alg(\varphi(A))$ and $D=A/I\oplus(A/J)^{\mathrm{op}}$. Through the canonical isomorphism $D\cong C/Q$, the natural $C/Q$-bimodule structure on $Q$ gives rise to a $D$-bimodule structure defined by
$d\cdot q = cq$ and $q\cdot d = qc$,
where $c\in C$ is any lift of $d$ and $q\in Q$. This action is well-defined because $Q^{2}=0$, and it is trivial since $QC=CQ=0$. Hence an $\mathbb{F}$-bilinear map $f\colon D\times D\to Q$ is a Hochschild $2$-cocycle exactly when $f(d_1d_2,d_3)=f(d_1,d_2d_3)$ for all $d_1,d_2,d_3\in D$. For a $1$-cochain $\lambda\colon D\to Q$, the differential then takes the simple form $(\delta\lambda)(d_1,d_2)=-\lambda(d_1d_2)$.

\begin{proposition}\label{prop:cocycle-obs}
Assume that $A=\Alg \bigl([[A,A],[A,A]] \bigr)$. Let $\varphi\colon A\to B$ be a Jordan homomorphism with the notation of \Cref{cor:range}.
Consider the canonical exact sequence
$$0\to Q\to C \xrightarrow{\pi} D\to 0$$
and let $s\colon D \to C$ be an $\mathbb{F}$-linear section of $\pi$. Define $$f\colon D\times D\to Q, (d_1,d_2)\mapsto s(d_{1})s(d_{2})-s(d_{1}d_{2}).$$ Then $f$ is a Hochschild $2$-cocycle with coefficients in the trivial $D$-bimodule $Q$, and its cohomology class is independent of the chosen section~$s$.
Moreover, the following statements are equivalent.
    \begin{enumerate}[label=\textup{(\roman*)}]
    \item $Q=0$.
    \item The exact sequence above splits as a sequence of
    $\mathbb{F}$-algebras.
    \item $f$ is a Hochschild $2$-coboundary.
    \end{enumerate}
\end{proposition}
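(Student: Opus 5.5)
We verify the cocycle property first. Since $\pi$ is an algebra homomorphism and $\pi s=\mathrm{id}_D$, we have $\pi(f(d_1,d_2))=d_1d_2-d_1d_2=0$, so $f$ indeed takes values in $Q=\ker\pi$. For the cocycle identity we expand
\[
f(d_1d_2,d_3)-f(d_1,d_2d_3)=s(d_1d_2)s(d_3)-s(d_1)s(d_2d_3),
\]
using associativity of $s(d_1d_2d_3)$. Now write $s(d_1d_2)=s(d_1)s(d_2)-f(d_1,d_2)$ and $s(d_2d_3)=s(d_2)s(d_3)-f(d_2,d_3)$. The triple products $s(d_1)s(d_2)s(d_3)$ cancel, and the remaining terms $f(d_1,d_2)s(d_3)$ and $s(d_1)f(d_2,d_3)$ vanish because $QC=CQ=0$ by \thmAref. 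This is exactly the simplified cocycle condition described before the proposition.

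For independence of the section, let $s'$ be another section and put $\lambda=s'-s\colon D\to Q$. Then
\[
f'(d_1,d_2)-f(d_1,d_2)=\lambda(d_1)s(d_2)+s(d_1)\lambda(d_2)+\lambda(d_1)\lambda(d_2)-\lambda(d_1d_2).
\]
The first three terms vanish since $Q$ annihilates $C$. What remains is $-\lambda(d_1d_2)=(\delta\lambda)(d_1,d_2)$, so $f'=f+\delta\lambda$ and the cohomology class is unchanged.

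For the equivalences, (i)$\Rightarrow$(ii) is trivial, since $\pi$ is then an isomorphism. For (ii)$\Rightarrow$(iii), choose an algebra section $s$; then $f=0$, and since the class of $f$ does not depend on the section, every choice gives a coboundary. For (iii)$\Rightarrow$(i), suppose $f=\delta\lambda$. Then $s'=s+\lambda$ satisfies $f'=f-\delta\lambda=0$, so $s'$ is multiplicative. The key step is to show $Q=0$ using idempotence. By \Cref{lem:domain-structure}, $C=C^2$. Take $q\in Q$ and write $q=\sum_i u_iv_i$ with $u_i,v_i\in C$. Decompose $u_i=s'(\pi u_i)+q_i$ and $v_i=s'(\pi v_i)+q_i'$ with $q_i,q_i'\in Q$. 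Since $Q$ annihilates $C$, we get $u_iv_i=s'(\pi u_i)s'(\pi v_i)=s'(\pi(u_iv_i))$. Hence $q=s'(\pi q)=s'(0)=0$.

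I expect this last implication to be the main obstacle, because it is the only place where the hypothesis enters beyond the annihilator property. In general, a split annihilator extension need not have $Q=0$; for example, $D\times Q$ with $Q$ as a null ideal is split. The identity $C=C^2$ from \Cref{lem:domain-structure}(3) is precisely what rules this out, so the argument must invoke it explicitly at this step.
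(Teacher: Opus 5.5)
Your proposal is correct and takes essentially the paper's route: the cocycle identity comes from associativity together with $QC=CQ=0$, a change of section gives $f_{s+\lambda}=f+\delta\lambda$, and the idempotence $C=C^2$ shows that a multiplicative section forces $Q=0$ (you chain (i)$\Rightarrow$(ii)$\Rightarrow$(iii)$\Rightarrow$(i) instead of proving two biconditionals, but the content is the same). One sign slip in (iii)$\Rightarrow$(i): by your own formula the section $s+\lambda$ has factor set $f+\delta\lambda=2\delta\lambda$, so you must take $s'=s-\lambda$ to get a multiplicative section.
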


\begin{proof}
 By \thmAref, $QC=CQ=0$, and exactness gives $Q=\ker\pi$. Hence $f$ takes its values in $Q$. If we expand the associativity relation
$$\bigl(s(d_1)s(d_2)\bigr)s(d_3)=s(d_1)\bigl(s(d_2)s(d_3)\bigr)$$
and use $QC=CQ=0$, the terms involving values of $f$ vanish. This implies that $f$ satisfies $f(d_1d_2,d_3) = f(d_1,d_2d_3)$, so it is a Hochschild $2$-cocycle for the trivial $D$-bimodule $Q$.

Now let $s' = s + \lambda$ be another linear section, where $\lambda\colon D \to Q$ is an $\mathbb{F}$-linear map. Since $QC=CQ=0$, it follows that
$$f'(d_1,d_2)=f(d_1,d_2)-\lambda(d_1d_2)=f(d_1,d_2)+(\delta\lambda)(d_1,d_2).$$
Thus, $f'-f=\delta\lambda$, so the Hochschild cohomology class of $f$ does not depend on the choice of $\mathbb{F}$-linear sections. We next characterize algebra splittings in terms of the vanishing of the cohomology class of $f$. If $f$ is a Hochschild $2$-coboundary, then $f=\delta\mu$ for some $\mathbb{F}$-linear $1$-cochain $\mu\colon D\to Q$. In particular, $f(d_1,d_2)=-\mu(d_1d_2)$ for all $d_1,d_2\in D$. Replacing $s$ by the linear section
$s'=s-\mu$ gives
$f'(d_1,d_2)=f(d_1,d_2)+\mu(d_1d_2)=0,$
so $s'$ is an algebra homomorphism and the extension splits.  Conversely, if $s'=s+\lambda$ is an algebra section, then the factor set determined by it is zero, and so
$$0=f'(d_1,d_2)=f(d_1,d_2)-\lambda(d_1d_2).$$
Hence $f(d_1,d_2)=\delta(-\lambda)(d_1, d_2)$, and $f$ is a coboundary. Therefore, \textup{(ii)} and \textup{(iii)} are equivalent.
It remains to compare splitting with $Q=0$. The implication from \textup{(ii)} to \textup{(i)} uses essentially the idempotence $C=C^2$ from \Cref{lem:domain-structure}(3). Indeed, if the extension splits through an algebra section $\sigma$, then
$C=\sigma(D)\oplus Q$ as $\mathbb{F}$-vector spaces. Since $Q$ annihilates $C$, every product of two elements of $C$ lies in $\sigma(D)$, and hence
$C^2\subseteq\sigma(D).$
So $C\subseteq\sigma(D)$, which in turn implies $Q=0$. Conversely, if $Q=0$, then $\pi$ is an $\mathbb{F}$-algebra isomorphism, so its inverse provides an algebra section. Hence \textup{(i)} implies \textup{(ii)}, and the proof is now complete.
\end{proof}

\section{The \texorpdfstring{$s$}{s}-unital, simple, and universal cases}\label{sec:sunital-universal}

We now turn to cases where the annihilator can be controlled more sharply. For noncommutative simple algebras, the required generation by double commutators is known from classical Lie-structure methods. In the $s$-unital case, one-sided local identities in the quotient force the defect intersection to vanish. Throughout this section, we set $Q=V_{\varphi}\cap W_{\varphi}$.

\subsection{Simple and \texorpdfstring{$s$}{s}-unital domains}

In the noncommutative simple case, no new generation argument is needed, as the reasoning in the proof of \cite[Corollary~2.3]{Bresar2022} already yields that every noncommutative simple algebra satisfies $A=\Alg([A,A])=\Alg([[A,A],[A,A]]).$
Thus, the canonical quotient description immediately specializes as follows.

\begin{corollary}\label{cor:simple-range}
Let $\varphi\colon A\to B$ be a nonzero Jordan homomorphism from a noncommutative simple algebra. Then $Q \subseteq \Ann(\Alg(\varphi(A)))$, and $\Alg(\varphi(A))/Q$ is isomorphic to one of $A$, $A^{\mathrm{op}}$ or $A\oplus A^{\mathrm{op}}$.
\end{corollary}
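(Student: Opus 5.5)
The plan is to reduce everything to \Cref{cor:range} and then use simplicity of $A$ to pin down the ideals $I$ and $J$. By the remark preceding the statement, a noncommutative simple algebra satisfies $A=\Alg([[A,A],[A,A]])$. Hence \thmAref applies and gives $Q\subseteq\Ann(\Alg(\varphi(A)))$ at once. Likewise, \Cref{cor:range}(2) gives an algebra isomorphism
$$\Alg(\varphi(A))/Q\cong A/I\oplus(A/J)^{\mathrm{op}},$$
where $I=\varphi^{-1}(V_{\varphi})$ and $J=\varphi^{-1}(W_{\varphi})$ are two-sided ideals of $A$ by \Cref{cor:range}(1).

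Since $A$ is simple, each of $I$ and $J$ is either $0$ or $A$. This gives four cases. If $I=J=0$, the quotient is $A\oplus A^{\mathrm{op}}$. If $I=0$ and $J=A$, the quotient is $A$. If $I=A$ and $J=0$, it is $A^{\mathrm{op}}$.

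The main obstacle is excluding $I=J=A$. In that case the quotient $\Alg(\varphi(A))/Q$ is zero, so $\Alg(\varphi(A))=Q$. Since $Q$ lies in the annihilator of $\Alg(\varphi(A))$, this forces $\Alg(\varphi(A))\cdot\Alg(\varphi(A))=0$. On the other hand, \Cref{lem:domain-structure}(3) gives $\Alg(\varphi(A))=\Alg(\varphi(A))^2$, so $\Alg(\varphi(A))=0$. Then $\varphi(A)=0$, contradicting the assumption that $\varphi$ is nonzero.

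Alternatively, $I\cap J=\varphi^{-1}(Q)$ would equal $A$. Then $\varphi(A)\subseteq Q$, and every Jordan product $\varphi(x)\circ\varphi(y)$ vanishes because $Q$ is annihilating. Since $A$ is spanned by Jordan products (\Cref{lem:domain-structure}(2)), this again gives $\varphi=0$. Either argument completes the case analysis and proves the statement.
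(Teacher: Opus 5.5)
Your proposal is correct and follows the paper's proof: reduce to \Cref{cor:range}, use simplicity to force $I,J\in\{0,A\}$, and rule out $I=J=A$ by combining $Q\subseteq\Ann(\Alg(\varphi(A)))$ with the idempotence $\Alg(\varphi(A))=\Alg(\varphi(A))^2$ from \Cref{lem:domain-structure}(3). Your alternative exclusion via Jordan products and \Cref{lem:domain-structure}(2) is an equally valid minor variant; it is essentially the argument by which part (3) of that lemma is proved.
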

\begin{proof}
In the notation of \Cref{cor:range}, simplicity of $A$ forces each of the two-sided ideals $I$ and $J$ to be either $0$ or $A$. If both coincide with $A$, then \thmAref together with that corollary yields $\varphi(A)\subseteq Q\subseteq\Ann(\Alg(\varphi(A))).$ Hence every product of two elements of $\varphi(A)$ is zero, so $\Alg(\varphi(A))^2=0$. 
On the other hand, \Cref{lem:domain-structure}(3) gives $\Alg(\varphi(A)) = \Alg(\varphi(A))^2$, which forces $\Alg(\varphi(A)) = 0$, contradicting $\varphi \neq 0$. Hence $(I,J)\neq (A,A)$.  The remaining three cases then yield the claimed quotients via \Cref{cor:range}.
\end{proof}

Following Tominaga \cite[before Lemma~1]{Tominaga1975}, an $\mathbb{F}$-algebra $R$ is said to be left (resp. right) $s$-unital if $RI=I$ (resp. $IR=I$) for every left (resp. right) ideal $I$ of $R$.~According to \cite[Theorem~1]{Tominaga1976}, this is equivalent to the finite subset formulation, namely that for every finite subset $S\subseteq R$, there is $e\in R$ such that $es=s$ (resp.\ $se=s$) for all $s\in S$.  The algebra $R$ is called \emph{$s$-unital} when it is $s$-unital on both sides. In this case, every finite subset admits a two-sided local identity. Indeed, if $e_{\ell}$ and $e_{r}$ are, respectively, a left and a right identity for a finite set $S$, then $e=e_{\ell}+e_{r}-e_{r}e_{\ell}$ satisfies $es=se=s$ for all $s\in S$. This differs from the stronger notion of a ring with local units in the sense of \'{A}nh and M\'{a}rki \cite{AnhMarki1987}, where the local identity common to a finite subset is required to be idempotent. A stronger condition appears in \cite{Abrams1983}, namely that the ring has a set of local units, i.e., a commuting set $E$ of idempotents such that for each $x\in R$, there is $e\in E$ with $ex=xe=x$.

For later use, we collect the following stability properties of one-sided $s$-unitality.

\begin{lemma}\label{lem:sunital-permanence}
Left and right $s$-unitality are preserved under quotients and direct products. Moreover, passage to the opposite algebra interchanges the two one-sided notions. In particular, the class of $s$-unital algebras is closed under quotients, opposite algebras, and direct products.
\end{lemma}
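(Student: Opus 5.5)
I will work with the finite-subset formulation of one-sided $s$-unitality, which is equivalent to the ideal-theoretic definition by \cite[Theorem~1]{Tominaga1976}. Each claim then becomes a matter of producing local identities. I treat the left-sided notion throughout; the right-sided case is symmetric, and the two-sided statements follow by combining both.

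\emph{Quotients.} Let $R$ be left $s$-unital, let $N$ be a two-sided ideal, and let $\bar S=\{s_1+N,\ldots,s_n+N\}$ be a finite subset of $R/N$. Lift $\bar S$ to $S=\{s_1,\ldots,s_n\}\subseteq R$ and choose $e\in R$ with $es_i=s_i$ for all $i$. Then $(e+N)(s_i+N)=s_i+N$, so $e+N$ is a left identity for $\bar S$.

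\emph{Opposite algebras.} If $R$ is left $s$-unital, then for a finite $S\subseteq R$ we have an $e$ with $es=s$. In $R^{\mathrm{op}}$ this reads $s^{\mathrm{op}}e^{\mathrm{op}}=s^{\mathrm{op}}$. Hence $R^{\mathrm{op}}$ is right $s$-unital, and the converse direction is the same argument.

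\emph{Direct products.} Let $R=\prod_{\lambda}R_\lambda$ with each $R_\lambda$ left $s$-unital, and let $S=\{s_1,\ldots,s_n\}\subseteq R$ be finite. For each $\lambda$, the projections $s_i(\lambda)$ form a finite subset of $R_\lambda$. Choose $e_\lambda\in R_\lambda$ with $e_\lambda s_i(\lambda)=s_i(\lambda)$ for all $i$, and set $e=(e_\lambda)_\lambda\in R$. Multiplication in the product is componentwise, so $es_i=s_i$. For arbitrary index sets this uses the axiom of choice, which is harmless here.

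For the final assertion, an $s$-unital algebra is left and right $s$-unital. The first two facts show that quotients and direct products of such algebras are again both left and right $s$-unital. The opposite algebra is right and left $s$-unital, so it is also $s$-unital. The only point needing care is to invoke Tominaga's equivalence rather than working directly with ideals. In the ideal formulation, quotients would require checking that every left ideal of $R/N$ lifts to a left ideal of $R$, and products would require controlling left ideals that are not products of left ideals. The local-identity formulation avoids both issues, so I expect no genuine obstacle.
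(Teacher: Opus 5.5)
Your proof is correct. The paper states this lemma without proof, and your argument supplies the routine check it leaves implicit. Using Tominaga's finite-subset characterization, you lift local identities to quotients, read $es=s$ as $s^{\mathrm{op}}e^{\mathrm{op}}=s^{\mathrm{op}}$ in $R^{\mathrm{op}}$, and choose local identities coordinatewise in products.

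You could also bypass Tominaga's theorem by using the single-element condition $a\in Ra$. It is directly equivalent to $RI=I$ for all left ideals $I$ (take $I=\mathbb{F}a+Ra$), and the same three arguments preserve it.
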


The elementary observation below eliminates the annihilator term.
 
\begin{lemma}\label{lem:sunital-lift}
Let $R$ be a ring satisfying $R=R^{2}$ and let $I$ be a two-sided ideal of $R$ with $I \subseteq\Ann(R)$. If $R/I$ is left $s$-unital, then $I=0$.
\end{lemma}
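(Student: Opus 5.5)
Fix an arbitrary $x\in I$; we aim to show $x=0$. Since $R=R^2$, write $x=\sum_{i=1}^n u_iv_i$ with $u_i,v_i\in R$. Apply left $s$-unitality of $R/I$ in its finite-subset form \cite[Theorem~1]{Tominaga1976} to the finite set $\{\bar u_1,\dots,\bar u_n\}$. This gives an element $e\in R$ with $\bar e\,\bar u_i=\bar u_i$ for all $i$, that is,
$$eu_i-u_i\in I\subseteq\Ann(R) \quad (1\le i\le n).$$

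The key step is to multiply on the right by $v_i$. Since $eu_i-u_i$ annihilates $R$, we get $(eu_i-u_i)v_i=0$, so $u_iv_i=eu_iv_i$. Summing over $i$ gives $x=ex$. On the other hand, $x\in I\subseteq\Ann(R)$, so $ex=0$. Hence $x=0$, and therefore $I=0$.

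There is essentially no obstacle here. The only point needing care is to apply the local left identity to the left factors $u_i$ of a factorization of $x$, rather than to $x$ itself. Applying it to $x$ directly would only yield $ex-x\in I$, which carries no information. This is exactly where the hypothesis $R=R^2$ enters. Notice also that only left $s$-unitality is used, and that it is used in its finite-subset formulation, which the paper has recorded as equivalent to the ideal-theoretic definition.
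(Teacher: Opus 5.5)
Your proof is correct and follows the paper's argument essentially step for step. Both write $x=\sum_i u_iv_i$ using $R=R^2$ and apply a local left identity of $R/I$ to the left factors $u_i$. Then $I\subseteq\Ann(R)$ gives both $ex=x$ and $ex=0$, so $x=0$.
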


\begin{proof}
Let $q\in I$. Since $R=R^{2}$, it follows that $q=\sum_{i=1}^{n}a_{i}b_{i}$ for some $a_{i},b_{i}\in R$.
For each $x\in R$, denote by $\bar x$ its image in $R/I$. Since $R/I$ is left $s$-unital, there is $\bar e\in R/I$ with $\bar e\bar a_{i}=\bar a_{i}$ for every $i$. Then $ea_{i}-a_{i}\in I\subseteq\Ann(R)$, so $(ea_{i}-a_{i})b_{i}=0$ for every $i$. Hence $eq-q=\sum_{i}(ea_{i}-a_{i})b_{i}=0$.  Combining this with the hypothesis $I\subseteq\Ann(R)$, we obtain $q=0$ and hence $I=0$.
\end{proof}

The canonical quotient description in \Cref{cor:range} and \Cref{lem:sunital-lift} together yield a splittability criterion that requires only one-sided $s$-unitality of the relevant quotients, rather than $s$-unitality of the whole algebra.

\begin{corollary}\label{cor:map-sunital}
Assume that $A=\Alg \bigl([[A,A],[A,A]] \bigr)$. Let $\varphi\colon A\to B$ be a Jordan homomorphism. Put $I=\varphi^{-1}(V_{\varphi})$ and
$J=\varphi^{-1}(W_{\varphi}).$ Suppose that $A/I$ is left $s$-unital and $A/J$ is right $s$-unital. 
Then $Q = 0$, and in particular, $\varphi$ is splittable and standard.
\end{corollary}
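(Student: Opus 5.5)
The idea is to apply \Cref{lem:sunital-lift} to $R=\Alg(\varphi(A))$ with the ideal $Q$. The hypotheses of that lemma hold as follows. \Cref{lem:domain-structure}(3) gives $R=R^2$, and \thmAref gives $Q\subseteq\Ann(R)$. It then remains to show that $R/Q$ is left $s$-unital.

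To see this, use the canonical exact sequence of \Cref{cor:range}(2), which identifies $R/Q$ with $A/I\oplus(A/J)^{\mathrm{op}}$. By hypothesis, $A/I$ is left $s$-unital. Since $A/J$ is right $s$-unital, \Cref{lem:sunital-permanence} shows that $(A/J)^{\mathrm{op}}$ is left $s$-unital, because passing to the opposite algebra interchanges the one-sided notions. Closure under direct products, again from \Cref{lem:sunital-permanence}, then makes $A/I\oplus(A/J)^{\mathrm{op}}$ left $s$-unital.

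For a finite direct sum this is also direct to check from the finite-subset formulation. Given finitely many pairs $(u_k,v_k)$, pick $e_1$ acting as a left identity on the $u_k$ in $A/I$, and $e_2$ acting as a left identity on the $v_k$ in $(A/J)^{\mathrm{op}}$. Then $(e_1,e_2)$ is a left identity for all the pairs. Since left $s$-unitality passes along algebra isomorphisms, $R/Q$ is left $s$-unital, and \Cref{lem:sunital-lift} yields $Q=0$.

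Finally, $Q=V_\varphi\cap W_\varphi=0$ is the definition of splittable. Standardness follows from \Cref{prop:standard-mod-Q}(2), applicable because $A=K(A)$ by \Cref{lem:domain-structure}(1). With $Q=0$, the map $\varphi_Q$ is $\varphi$ itself, viewed into $\Alg(\varphi(A))\subseteq B$, so $\varphi$ is standard. There is no real obstacle here. The only point needing care is the opposite-algebra bookkeeping: the right $s$-unitality of $A/J$ is exactly what is needed on the $\mathrm{op}$ side to produce a left local identity in the quotient.
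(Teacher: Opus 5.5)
Your proposal is correct and follows essentially the same route as the paper. The paper also identifies $\Alg(\varphi(A))/Q$ with $A/I\oplus(A/J)^{\mathrm{op}}$ via \Cref{cor:range}, obtains left $s$-unitality from \Cref{lem:sunital-permanence}, and then applies \Cref{lem:sunital-lift} using \thmAref and \Cref{lem:domain-structure}(3) to get $Q=0$, with standardness coming from \Cref{prop:standard-mod-Q}.
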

\begin{proof}
By  \Cref{cor:range}, we have $\Alg(\varphi(A))/Q\cong A/I\oplus (A/J)^{\mathrm{op}}$.
\Cref{lem:sunital-permanence} shows that $(A/J)^{\mathrm{op}}$ is left $s$-unital, which together with the hypothesis that $A/I$ is left $s$-unital implies, again by \Cref{lem:sunital-permanence}, that their direct sum is left $s$-unital, and hence so is $\Alg(\varphi(A))/Q$. On the other hand, \thmAref gives $Q\subseteq\Ann(\Alg(\varphi(A)))$ and \Cref{lem:domain-structure}(3) gives $\Alg(\varphi(A))=\Alg(\varphi(A))^2$. Applying \Cref{lem:sunital-lift} to $\Alg(\varphi(A))$ and $Q$ gives $Q=0$, and the result now follows from \Cref{prop:standard-mod-Q}.
\end{proof}

The preceding criterion holds automatically when $A$ itself is $s$-unital, so the defect intersection vanishes in the following case.

\begin{theorem}\label{theo:sunital}
Let $A$ be an $s$-unital algebra with $A=\Alg \bigl([[A,A],[A,A]] \bigr)$. Then every Jordan homomorphism from $A$ is splittable and standard.
\end{theorem}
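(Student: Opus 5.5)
The plan is to reduce the statement directly to \Cref{cor:map-sunital}. Let $\varphi\colon A\to B$ be any Jordan homomorphism, and put $I=\varphi^{-1}(V_{\varphi})$ and $J=\varphi^{-1}(W_{\varphi})$. By \Cref{cor:range}(1), both $I$ and $J$ are two-sided ideals of $A$, so the quotients $A/I$ and $A/J$ are algebras. Since $A$ is $s$-unital, it is in particular left and right $s$-unital.

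By \Cref{lem:sunital-permanence}, one-sided $s$-unitality passes to quotients. Hence $A/I$ is left $s$-unital and $A/J$ is right $s$-unital. If one prefers to avoid the lemma, this is immediate from the finite-subset formulation: given finitely many cosets $x_k+I$, choose $e\in A$ with $ex_k=x_k$ for all $k$; then $e+I$ is a left identity for these cosets. The right-sided case for $A/J$ is symmetric.

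The hypotheses of \Cref{cor:map-sunital} are therefore satisfied. It gives $Q=V_{\varphi}\cap W_{\varphi}=0$, so $\varphi$ is splittable. Standardness then follows from \Cref{prop:standard-mod-Q}(2), because $\varphi_Q=\varphi$ when $Q=0$.

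No step poses a real obstacle, since all the substantive work has already been done in \thmAref, \Cref{lem:sunital-lift} and \Cref{cor:map-sunital}. The only points to check are that $I$ and $J$ are genuinely ideals, so that the quotients make sense, and that the one-sided $s$-unitality conditions are oriented as \Cref{cor:map-sunital} requires: left for $A/I$ and right for $A/J$. Both follow from two-sided $s$-unitality of $A$.
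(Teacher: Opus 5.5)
Your proposal is correct and is essentially the paper's own proof: the paper also sets $I=\varphi^{-1}(V_{\varphi})$ and $J=\varphi^{-1}(W_{\varphi})$, uses \Cref{lem:sunital-permanence} to see that $A/I$ and $A/J$ are $s$-unital, and then obtains $Q=0$, and hence splittability and standardness, from \Cref{cor:map-sunital}. Your checks that $I,J$ are ideals and that the left/right orientation matches, and your direct finite-subset argument for quotients, are accurate.
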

\begin{proof}
Let $\varphi\colon A\to B$ be a Jordan homomorphism. Put $I=\varphi^{-1}(V_\varphi)$ and $J=\varphi^{-1}(W_\varphi)$. By \Cref{lem:sunital-permanence}, $A/I$ and $A/J$ are $s$-unital. Hence \Cref{cor:map-sunital}  yields $Q=0$. Therefore, $\varphi$ is splittable and standard.
\end{proof}

Together with the observation on simple algebras above, the preceding theorem implies that every Jordan homomorphism from a noncommutative simple $s$-unital algebra into an associative algebra is splittable and standard.

\subsection{The special universal envelope}

We use the usual special universal envelope of a Jordan algebra. For an associative algebra $A$, let $A^{(+)}$ be the Jordan algebra with the same underlying vector space as $A$ and product $x\circ y = xy+yx$. We recall that a Jordan algebra is \emph{special} if it is isomorphic to a Jordan subalgebra of $A^{(+)}$ for some associative algebra $A$.

For a Jordan algebra $\mathcal{J}$, let $T^{+}(\mathcal{J})=\bigoplus_{n\ge1}\mathcal{J}^{\otimes n}$ be the reduced tensor algebra of the underlying vector space of $\mathcal{J}$, and let $\mathcal I_\mathcal{J}$ denote the two-sided ideal generated by the elements $x\otimes y + y\otimes x - x\circ y$ for all $x,y\in \mathcal{J}$.
The \emph{special universal envelope} of $\mathcal{J}$ is $\mathrm{U}_s(\mathcal{J})=T^{+}(\mathcal{J})/\mathcal I_\mathcal{J}$, equipped with the canonical Jordan homomorphism $u_\mathcal{J}\colon \mathcal{J}\to \mathrm{U}_s(\mathcal{J})^{(+)}$. This is Jacobson's construction \cite[\S3, pp.~514--515]{Jacobson1951}.

The pair $(\mathrm{U}_s(\mathcal{J}),u_\mathcal{J})$ is characterized by the following universal property. For any associative algebra $B$ and any Jordan homomorphism $\psi\colon \mathcal{J}\to B^{(+)}$, there exists a unique $\mathbb{F}$-algebra homomorphism $\widetilde\psi\colon \mathrm{U}_s(\mathcal{J})\to B$ satisfying $\widetilde\psi\circ u_\mathcal{J}=\psi$. Moreover, $\mathrm{U}_s(\mathcal{J})=\Alg(u_\mathcal{J}(\mathcal{J}))$ and the pair $(\mathrm{U}_s(\mathcal{J}),u_\mathcal{J})$ is determined up to a unique isomorphism compatible with the canonical maps. In particular, $u_\mathcal{J}$ is injective if and only if $\mathcal{J}$ is special.

\begin{theorem}\label{thm:universal-extension}
Let $A$ be an algebra such that
$A=\Alg \bigl([[A,A],[A,A]] \bigr),$
and let $(U,u)$ be the special universal envelope of the Jordan algebra $A^{(+)}$.~Then the following assertions hold.
\begin{enumerate}[label=\textup{(\arabic*)}]
\item $V_u\cap W_u \subseteq\Ann(U)$.
\item There is a canonical surjective homomorphism $\pi\colon U\twoheadrightarrow A\oplus A^{\mathrm{op}}$ characterized by $\pi(u(x))=(x,x^{\mathrm{op}})$ for all $x\in A$ and whose kernel is $V_u\cap W_u$.
\end{enumerate}
\end{theorem}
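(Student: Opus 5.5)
Part (1) is immediate: $u\colon A\to U$ is a Jordan homomorphism (a Jordan homomorphism $A^{(+)}\to U^{(+)}$ is the same as an $\mathbb{F}$-linear map preserving $\circ$), and $\Alg(u(A))=U$ by the recalled properties of the envelope. Applying \thmAref{} to $\varphi=u$ gives $V_u\cap W_u\subseteq\Ann(\Alg(u(A)))=\Ann(U)$.

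For part (2), we first produce $\pi$ from the universal property. The diagonal map $\psi\colon A\to A\oplus A^{\mathrm{op}}$, $\psi(x)=(x,x^{\mathrm{op}})$, is a Jordan homomorphism. Indeed, in $A^{\mathrm{op}}$ we have $x^{\mathrm{op}}\circ y^{\mathrm{op}}=(yx+xy)^{\mathrm{op}}$, so both coordinates preserve $\circ$. Hence there is a unique algebra homomorphism $\pi\colon U\to A\oplus A^{\mathrm{op}}$ with $\pi\circ u=\psi$, and uniqueness gives the characterization.

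\emph{Surjectivity.} The image of $\pi$ is $\Alg(\psi(A))$. Since $\psi(xy)-\psi(x)\psi(y)=(0,[x,y]^{\mathrm{op}})$ and $\psi(xy)-\psi(y)\psi(x)=([x,y],0)$, this image contains $[A,A]\oplus0$ and $0\oplus[A,A]^{\mathrm{op}}$. By \Cref{lem:domain-structure}(1), $A=\Alg([A,A])$, and likewise $A^{\mathrm{op}}=\Alg([A,A]^{\mathrm{op}})$. Hence the image contains $A\oplus0$ and $0\oplus A^{\mathrm{op}}$, so $\pi$ is onto. This is the same argument as in Step~3 of the proof of \thmBref.

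\emph{Kernel.} Write $Q=V_u\cap W_u$, $I=u^{-1}(V_u)$ and $J=u^{-1}(W_u)$. By \Cref{cor:range} there is a canonical epimorphism $\pi_0\colon U\to A/I\oplus(A/J)^{\mathrm{op}}$ with kernel $Q$ and $\pi_0(u(x))=(x+I,(x+J)^{\mathrm{op}})$. The plan is to show $I=J=0$; then $\pi_0$ and $\pi$ agree on the generating set $u(A)$, hence everywhere, and $\ker\pi=Q$.

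To show $I=0$, note that $\pi(a_u(x,y))=\psi(xy)-\psi(x)\psi(y)=(0,[x,y]^{\mathrm{op}})$, whose first coordinate is $0$. Since $\pi$ is a homomorphism, $\pi(V_u)$ lies in the ideal $0\oplus A^{\mathrm{op}}$. So if $x\in I$, then $\pi(u(x))=(x,x^{\mathrm{op}})$ has first coordinate $0$, forcing $x=0$. Symmetrically, $\pi(W_u)\subseteq A\oplus0$, which gives $J=0$.

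Alternatively, one can skip \Cref{cor:range} and argue directly. The inclusion $Q\subseteq\ker\pi$ follows because $\pi(Q)\subseteq(0\oplus A^{\mathrm{op}})\cap(A\oplus0)=0$. For the reverse inclusion, use $U=V_u\oplus W_u$ modulo $Q$ from \Cref{prop:standard-mod-Q}: the restriction of $\pi$ to $V_u/Q\oplus W_u/Q$ is injective on each summand by the $I=J=0$ argument.

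The only delicate point is the identification $\ker\pi=Q$, specifically verifying $I=J=0$. This rests on the observation that $\pi$ sends the two defect ideals into complementary coordinate ideals; everything else is formal.
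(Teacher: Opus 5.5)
Your proposal is correct and follows essentially the paper's route: Theorem~A for part (1); for part (2), the universal property gives homomorphisms out of $U$ whose two coordinates kill $V_u$ and $W_u$ respectively, hence $u^{-1}(V_u)=u^{-1}(W_u)=0$, and \Cref{cor:range} then supplies the surjection with kernel $V_u\cap W_u$. The only differences are that the paper uses $\rho\colon U\to A$ and $\rho^{\mathrm{op}}\colon U\to A^{\mathrm{op}}$ separately where you bundle them into one $\pi$, and that your separate surjectivity argument is redundant, since it follows once $\pi$ agrees with the map from \Cref{cor:range} on $u(A)$.
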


\begin{proof}
Since $U=\Alg(u(A))$, it follows that $u\colon A\to U$ is a Jordan homomorphism, so \thmAref gives $V_u\cap W_u\subseteq\Ann(U)$, which is part~\textup{(1)}.

\medskip

For part~\textup{(2)}, view the identity map on $A$ as a Jordan homomorphism $A^{(+)}\to A^{(+)}$. By the universal property, it extends to an algebra homomorphism
$\rho\colon U\to A$ with $\rho(u(x))=x.$
Similarly, viewing $x\mapsto x^{\mathrm{op}}$ as the canonical Jordan homomorphism $A^{(+)}\to (A^{\mathrm{op}})^{(+)}$, we obtain an algebra homomorphism $\rho^{\mathrm{op}}\colon U\to A^{\mathrm{op}}$ with $\rho^{\mathrm{op}}(u(x))=x^{\mathrm{op}}$.
Since $\rho(a_u(x,y))=0$ and $\rho^{\mathrm{op}}(b_u(x,y))=0$, we have $V_u\subseteq\ker\rho$ and $W_u\subseteq\ker\rho^{\mathrm{op}}$. Hence $u^{-1}(V_u)=u^{-1}(W_u)=0$. \Cref{cor:range} yields a canonical surjection $\pi\colon U\twoheadrightarrow A\oplus A^{\mathrm{op}}$, $\pi(u(x))=(x,x^{\mathrm{op}})$ for $x\in A$, with $\ker\pi=V_u\cap W_u$.
\end{proof}

For $s$-unital $A$, the universal defect intersection vanishes by \Cref{theo:sunital}. Hence the annihilator extension in \Cref{thm:universal-extension} is trivial and the universal envelope coincides with $A\oplus A^{\mathrm{op}}$.

\begin{corollary}\label{cor-universal}
Let $A$ be an $s$-unital algebra with $A=\Alg\bigl([[A,A],[A,A]]\bigr).$
Then $\mathrm{U}_s(A^{(+)})$ is canonically isomorphic to $A\oplus A^{\mathrm{op}}$ via the $\mathbb{F}$-algebra isomorphism $\pi \colon \mathrm{U}_s(A^{(+)})\to A\oplus A^{\mathrm{op}}$ with $\pi (u_{A^{(+)}}(x))=(x,x^{\mathrm{op}})$ for $x\in A$.
\end{corollary}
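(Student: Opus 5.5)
We apply \Cref{thm:universal-extension} to the special universal envelope $(U,u)$ of $A^{(+)}$, where $U=\mathrm{U}_s(A^{(+)})$ and $u=u_{A^{(+)}}$. Part~(2) of that theorem already gives a canonical surjective algebra homomorphism $\pi\colon U\twoheadrightarrow A\oplus A^{\mathrm{op}}$. It is determined by $\pi(u(x))=(x,x^{\mathrm{op}})$, and its kernel is $V_u\cap W_u$. The statement therefore reduces to showing $V_u\cap W_u=0$, since then $\pi$ is injective and hence an isomorphism with the required normalization.

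To see that the kernel vanishes, regard $u\colon A\to U$ as a Jordan homomorphism from $A$ into the associative algebra $U$. This is legitimate because $u$ preserves the Jordan product $x\circ y=xy+yx$ by construction of $\mathcal I_{A^{(+)}}$. The defect ideals $V_u$ and $W_u$ are the ones formed inside $\Alg(u(A))=U$. By hypothesis $A$ is $s$-unital and $A=\Alg([[A,A],[A,A]])$, so \Cref{theo:sunital} applies to $u$ and shows that $u$ is splittable, that is, $V_u\cap W_u=0$. Spelled out, this step passes through \Cref{cor:map-sunital}. By \Cref{lem:sunital-permanence}, the quotients $A/u^{-1}(V_u)$ and $A/u^{-1}(W_u)$ are $s$-unital. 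In fact these preimages are $0$, as shown in the proof of \Cref{thm:universal-extension}, so the quotients are just $A$. \Cref{lem:sunital-lift} then kills the annihilator ideal $V_u\cap W_u$, using $U=U^2$ from \Cref{lem:domain-structure}(3).

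Finally, I would note canonicity. The map $\pi$ is built from the universal property, namely from the algebra homomorphisms $\rho\colon U\to A$ and $\rho^{\mathrm{op}}\colon U\to A^{\mathrm{op}}$ extending $x\mapsto x$ and $x\mapsto x^{\mathrm{op}}$. Since $U=\Alg(u(A))$, any algebra homomorphism on $U$ is determined by its values on $u(A)$, so $\pi$ is uniquely characterized by $\pi(u(x))=(x,x^{\mathrm{op}})$. No real obstacle remains: the only point needing care is that \Cref{theo:sunital} is applied with codomain $U$ itself. This causes no difficulty, because defect ideals are always taken in the generated range, which here is all of $U$.
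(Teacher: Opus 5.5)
Your proof is correct and follows the paper's argument: the paper likewise applies \Cref{theo:sunital} to the Jordan homomorphism $u$ to get $V_u\cap W_u=0$, so the canonical surjection $\pi$ of \Cref{thm:universal-extension}(2) has zero kernel and is therefore an isomorphism. Your unpacking through \Cref{cor:map-sunital} with $I=J=0$, and your canonicity remark based on $U=\Alg(u(A))$, are accurate expansions of that one-line argument.
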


\section{The unital case \texorpdfstring{$A=K(A)$}{A=K(A)}}

We turn to the unital case $A=K(A)$. Under this hypothesis, standardness for Jordan epimorphisms was proved in \cite{BresarZelmanov2026}, where the necessity of surjectivity was left open. We show that, for unital algebras, the condition $A=K(A)$ is equivalent to
$A=\Alg\bigl([[A,A],[A,A]]\bigr).$
Thus, arbitrary Jordan homomorphisms from such algebras fall within the scope of \thmAref.

Let $\mathcal{X}=\{x_1,x_2,\ldots\}$ be a countably infinite set of noncommuting variables, $\mathbb{F}\langle\mathcal{X}\rangle$ the free unital associative $\mathbb{F}$-algebra on $\mathcal{X}$, and $T$ a $T$-ideal of $\mathbb{F}\langle\mathcal{X}\rangle$.  Let $T(A)$ denote the two-sided ideal of $A$ consisting of all evaluations of polynomials from $T$ on $A$.~In particular, we should note that if $f\in\mathbb{F}\langle\mathcal{X}\rangle$ and $(f)^T$ denotes the $T$-ideal generated by $f$, then $(f)^T(A)$ is precisely the two-sided ideal of $A$ generated by all evaluations of $f$ on $A$.

Recall that a polynomial $p(x_{1},\ldots,x_{m})\in\mathbb{F}\langle\mathcal X\rangle$ is \emph{nonmatrix} if it is not a polynomial identity of $\mathrm{M}_{2}(\mathbb{F})$, i.e., the value $p(a_{1},\ldots,a_{m})\neq 0$ for some  $a_{1},\ldots,a_{m}\in\mathrm{M}_{2}(\mathbb{F})$. Following
\cite{BresarZelmanov2026}, a $T$-ideal $T$ is called a
\emph{nonmatrix commutator $T$-ideal}~if
$T\subseteq([x_1,x_2])^T$  and $T(\mathrm{M}_2(\mathbb{F}))\neq 0.$

\smallskip

We use the comparison result of \cite[Lemma~2.4]{BresarZelmanov2026}. Under the additional assumption that the relevant $T$-ideals are closed under partial linearizations, it states that for nonmatrix commutator $T$-ideals $T_1$ and $T_2$, the equalities $A=T_1(A)$ and $A=T_2(A)$ are equivalent for every unital algebra $A$. For a $T$-ideal generated by a multilinear polynomial, as in the case at hand, this closure property is a consequence of the following lemma.

\begin{lemma}\label{lem:partial-linearization}
Let $f\in\mathbb{F}\langle\mathcal X\rangle$ be multilinear. Then the $T$-ideal $(f)^T$ is multihomogeneous with respect to the standard multigrading by the degrees in the variables.  In particular,~$(f)^T$ is closed under partial linearizations.
\end{lemma}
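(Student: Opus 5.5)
The plan is to exhibit $(f)^T$ as a span of multihomogeneous elements. Recall that $(f)^T$ is the $\mathbb{F}$-span of all elements
$$u\,f(g_1,\ldots,g_m)\,v$$
with $u,v\in\mathbb{F}\langle\mathcal X\rangle$ and $g_1,\ldots,g_m\in\mathbb{F}\langle\mathcal X\rangle$, where $f=f(x_1,\ldots,x_m)$. This holds because the span of such elements is a two-sided ideal. It is also stable under all endomorphisms, since substituting into $f(g_1,\ldots,g_m)$ yields $f(\sigma(g_1),\ldots,\sigma(g_m))$.

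The first step is to reduce to monomial substitutions. Since $f$ is multilinear, $f(g_1,\ldots,g_m)$ is linear in each $g_i$ separately. Expanding each $g_i$ into its monomials shows that $f(g_1,\ldots,g_m)$ is a linear combination of the elements $f(w_1,\ldots,w_m)$ with $w_i$ monomials. Likewise $u$ and $v$ may be taken to be monomials.

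The key observation is that $u\,f(w_1,\ldots,w_m)\,v$ is then multihomogeneous. Each monomial of $f$ contains every $x_i$ exactly once, so substituting $x_i\mapsto w_i$ produces words all of which have the same multidegree, namely $\sum_i \deg(w_i)$ in each variable. Multiplying by the monomials $u$ and $v$ shifts this multidegree uniformly. Hence $(f)^T$ is spanned by multihomogeneous elements, and therefore it is a multigraded subspace: each homogeneous component of an element of $(f)^T$ again lies in $(f)^T$.

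For the final claim, closure under partial linearizations follows from multihomogeneity in the standard way. Let $g\in(f)^T$ and fix a variable $x_j$. Substituting $x_j\mapsto x_j+\lambda x_k$ with a fresh variable $x_k$ keeps the result in $(f)^T$, since this substitution is an endomorphism. The partial linearizations are exactly the components of this element of fixed degree in $x_k$. These components are multihomogeneous components of an element of $(f)^T$, so they lie in $(f)^T$ by the previous paragraph.

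No step here is a serious obstacle. The point needing care is to use the multigrading itself rather than a Vandermonde argument in $\lambda$, which would require $\mathbb{F}$ to be large. Since the paper only assumes $\operatorname{char}\mathbb{F}\neq 2$, working with homogeneous components avoids any infinite-field hypothesis.
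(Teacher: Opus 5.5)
Your proof is correct and takes essentially the same approach as the paper. You write $(f)^T$ as the span of elements $u\,\theta(f)\,v$, use multilinearity of $f$ to split each one into pieces of a single multidegree (monomials in your version, multihomogeneous components in the paper's, which is only a cosmetic difference), and obtain the partial linearizations as homogeneous components of a substituted element, where the scalar $\lambda$ is harmless and can simply be taken to be $1$.
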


    \begin{proof}
    It is straightforward to check that every element of $(f)^T$ is a finite sum of terms of the form $r\theta(f)s$, where $r,s\in\mathbb{F}\langle\mathcal X\rangle$ and $\theta$ is an endomorphism of the free algebra.~Writing each $\theta(x_i)$ as a sum of its multihomogeneous components, $\theta(f)$ expands as a sum of terms $f(g_1,\ldots,g_m)$, where each $g_i$ is a component of $\theta(x_i)$.~The map $x_i\mapsto g_i$ extends to an endomorphism $\theta'$ of the free algebra, so $f(g_1,\ldots,g_m)=\theta'(f)\in (f)^T$. Moreover, since $f(x_1,\ldots,x_m)$ is multilinear, $\theta'(f)$ has a well-defined multidegree. Hence if $r'$ and $s'$ are components of $r$ and $s$ in the standard multigrading, then $r'\theta'(f)s'$ has a single multidegree and lies in $(f)^T$.~It follows that every multigraded component of $r\theta(f)s$ lies in $(f)^T$, so $(f)^T$ itself is multihomogeneous. 

    Now take $h\in (f)^T$.~Replacing a variable by a sum of new variables leaves it in $(f)^T$.~Its partial linearizations are precisely the multihomogeneous components corresponding to the prescribed degrees in the new variables. Since $(f)^T$ is multihomogeneous, it contains these components and is closed under partial linearizations.
    \end{proof}

Let $p(x_1,\dots,x_m)$ be a multilinear polynomial and $A$ an arbitrary algebra, and define $L_p(A)=\operatorname{span}_{\mathbb{F}}\{p(a_1,\dots,a_m)\mid a_i\in A\}$. Multilinearity yields stability of $L_p(A)$ under inner derivations, as recorded below.

\begin{lemma}\label{lem:lie-values}
If $p$ is multilinear, then $L_p(A)$ is a Lie ideal of $A$.
\end{lemma}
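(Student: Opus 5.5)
The plan is to show directly that $[b,L_p(A)]\subseteq L_p(A)$ for every $b\in A$. By linearity it suffices to treat a single value $p(a_1,\dots,a_m)$. The main tool will be the fact that the inner derivation $\mathrm{ad}_b=[b,\cdot\,]$ satisfies the Leibniz rule on products: $[b,uv]=[b,u]v+u[b,v]$.

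First, I will write $p=\sum_{\sigma\in S_m}\lambda_\sigma x_{\sigma(1)}\cdots x_{\sigma(m)}$. This is possible because $p$ is multilinear. Applying the Leibniz rule to each monomial and regrouping by the position in which $[b,\cdot\,]$ acts gives
$$[b,p(a_1,\dots,a_m)]=\sum_{i=1}^m p(a_1,\dots,a_{i-1},[b,a_i],a_{i+1},\dots,a_m).$$
The regrouping works because, for each fixed $i$, the terms where the derivation hits the factor $x_i$ reassemble exactly into $p$ evaluated with $a_i$ replaced by $[b,a_i]$. The coefficients $\lambda_\sigma$ are unchanged, since each monomial is linear in $x_i$.

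Second, each summand on the right is itself an evaluation of $p$ on elements of $A$, because $[b,a_i]\in A$. Hence the sum lies in $L_p(A)$. Since $L_p(A)$ is a subspace by definition, this shows $[A,L_p(A)]\subseteq L_p(A)$, so $L_p(A)$ is a Lie ideal.

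The only point requiring care is the regrouping identity, specifically checking that no constant term or lower-degree terms appear. Multilinearity excludes them: every monomial has degree exactly one in each variable. This also means the identity holds even if $A$ is nonunital, since no identity element is used. I expect no real obstacle beyond writing out this bookkeeping.
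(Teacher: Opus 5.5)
Your proposal is correct and follows essentially the same approach as the paper. Both arguments apply the Leibniz rule for $[b,\cdot\,]$ to each multilinear monomial, extend by linearity to obtain $[b,p(a_1,\dots,a_m)]=\sum_i p(a_1,\dots,[b,a_i],\dots,a_m)$, and conclude $[A,L_p(A)]\subseteq L_p(A)$.
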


    \begin{proof}
    For $a,a_{1},\dots,a_{m}\in A$, iterated application of $[a,uv]=[a,u]v+u[a,v]$ gives, first for multilinear monomials and then by linearity for $p$,
    $$[a,p(a_{1},\dots,a_{m})]=\sum_{i=1}^{m}p(a_{1},\dots,[a,a_{i}],\dots,a_{m}).$$
    Consequently, every commutator of an element of $A$ with a value of $p$ belongs to $L_{p}(A)$. Therefore, $[A,L_{p}(A)]\subseteq L_{p}(A)$, and $L_{p}(A)$ is a Lie ideal of $A$.
    \end{proof}

The following elementary lemma connects the preceding $T$-ideal comparison to generation of the ambient algebra.

\begin{lemma}\label{lem:lie-ideal-square}
Let $A$ be a unital algebra with a Lie ideal $L$. If $[L,L]$ generates $A$ as a two-sided ideal, then
$A=L+L^2.$
\end{lemma}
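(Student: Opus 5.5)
The plan is to put $M=L+L^{2}$ and show, in three steps, that $M$ contains the two-sided ideal generated by $[L,L]$. Since $A$ is unital, that ideal is $A[L,L]A$, the span of the products $x\,c\,y$ with $x,y\in A$ and $c\in[L,L]$. The hypothesis says this ideal is all of $A$, so the three steps give $A\subseteq M\subseteq A$, that is, $A=L+L^{2}$.

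\emph{Step 1: $M$ is a Lie ideal of $A$.} By hypothesis $[A,L]\subseteq L$. For $a\in A$ and $l,k\in L$, the derivation rule gives
$$[a,lk]=[a,l]k+l[a,k]\in LL=L^{2}.$$
Hence $[A,L^{2}]\subseteq L^{2}$, and therefore $[A,M]\subseteq M$.

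\emph{Step 2: $[L,L]A\subseteq M$.} For $l,k\in L$ and $a\in A$, expanding both sides shows
$$[l,k]a=[l,ka]-k[l,a].$$
The term $[l,ka]$ lies in $[L,A]\subseteq L$. The term $k[l,a]$ lies in $L\cdot L\subseteq L^{2}$, because $[l,a]\in L$. By linearity, $c\,a\in M$ for every $c\in[L,L]$ and $a\in A$.

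\emph{Step 3: $A[L,L]A\subseteq M$.} Let $x,y\in A$ and $c\in[L,L]$. Write
$$x\,c\,y=[x,cy]+(cy)x=[x,cy]+c\,(yx).$$
By Step~2, $c(yx)\in M$. Also by Step~2, $cy\in M$, so Step~1 gives $[x,cy]\in[A,M]\subseteq M$. Therefore $x\,c\,y\in M$, and hence $A[L,L]A\subseteq M$, which finishes the proof as explained above.

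The only step requiring any care is Step~3. The route through the commutator $[x,cy]$ is what avoids needing one-sided multiplicative closure of $M$, which we do not have; Step~1 supplies exactly the Lie-ideal property of $M$ that this route uses. The remaining steps are one-line identities.
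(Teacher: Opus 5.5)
Your proof is correct and is essentially the paper's argument: both rest on the same one-sided identity (your $[l,k]a=[l,ka]-k[l,a]$ is the mirror image of the paper's $a[\ell_1,\ell_2]=[a\ell_1,\ell_2]-[a,\ell_2]\ell_1$), together with a Lie-ideal trick to pass from one-sided to two-sided products. The only difference is where that trick is applied: the paper uses the Jacobi identity to show that $[L,L]$ is itself a Lie ideal, which gives $A[L,L]A=A[L,L]$, whereas you show that the target $L+L^2$ is a Lie ideal and absorb the left factor through $xcy=[x,cy]+c(yx)$.
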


    \begin{proof}
    Put $M=[L,L]$. The subspace $M$ remains a Lie ideal of $A$. Indeed, take $\ell_1,\ell_2\in L$ and $a\in A$. By the Jacobi identity,
    $$[[\ell_1,\ell_2],a]=[\ell_1,[\ell_2,a]]+[[\ell_1,a],\ell_2]\in[L,L],$$
    hence $[M,A]\subseteq M$. We 
    observe that $MA\subseteq AM.$ To verify this, if $m\in M$ and $a\in A$, then $ma=am+[m,a],$
    where $[m,a]\in M\subseteq AM$. It follows that
    $$AMA=A(MA)\subseteq A(AM)=AM,$$
    while $AM\subseteq AMA$ because $A$ is unital. Hence $AMA=AM$.
    Let $a\in A$ and $\ell_1,\ell_2\in L$. Since $L$ is a Lie ideal of $A$, we have
    $$a[\ell_1,\ell_2]=[a\ell_1,\ell_2]-[a,\ell_2]\ell_1\in L+L^2,$$
    so by linearity $AM\subseteq L+L^2$. In addition, under the assumption that $M=[L,L]$ generates $A$ as a two-sided ideal, $A=AMA=AM\subseteq L+L^2$, hence $A=L+L^2$.
    This proves the lemma. 
    \end{proof}

An element $p\in\mathbb{F}\langle\mathcal X\rangle$ is called a \emph{Lie polynomial} if it lies in the free Lie subalgebra generated by $\mathcal X$ with bracket $[u,v]=uv-vu$. We refer to \cite[\S1.2]{Reutenauer1993} for more information on Lie polynomials.

We record here, for later use, the following auxiliary consequence of \cite[Lemma~2.4]{BresarZelmanov2026} in a form convenient for the arguments below. Its only role in the proof of \thmCref is to pass from $A=K(A)$ to generation by iterated commutators.

\begin{proposition}\label{prop:lie-polynomial-generation}
Let $A$ be a unital algebra satisfying $A=K(A)$. Let $p=p(x_1,\ldots,x_m)$ be a multilinear Lie polynomial in $m\ge2$ variables which is not an identity of $\mathrm{M}_2(\mathbb{F})$. Then $$A=L_p(A)+L_p(A)^2.$$
In particular, $A=\Alg(L_p(A))$.
\end{proposition}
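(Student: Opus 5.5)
The plan is to apply \Cref{lem:lie-ideal-square} with $L=L_p(A)$. By \Cref{lem:lie-values}, $L$ is a Lie ideal of $A$, and $A$ is unital. So everything comes down to showing that $[L,L]$ generates $A$ as a two-sided ideal.

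First, I will note that $[L,L]$ is itself the span of the values of a multilinear Lie polynomial. Put
$$q=[p(x_1,\ldots,x_m),p(x_{m+1},\ldots,x_{2m})].$$
Since $p$ is multilinear, the span of the values of $q$ on $A$ is exactly $[L_p(A),L_p(A)]$. Hence the two-sided ideal generated by $[L,L]$ equals $(q)^T(A)$.

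Next, I need $(q)^T$ to be a nonmatrix commutator $T$-ideal. Since $q$ is a commutator, $(q)^T\subseteq([x_1,x_2])^T$. For the condition $(q)^T(\mathrm{M}_2(\mathbb{F}))\neq0$, I will argue as follows.
\begin{itemize}
\item Because $p$ is a multilinear Lie polynomial with $m\ge2$, its values on $\mathrm{M}_2(\mathbb{F})$ lie in $\mathfrak{sl}_2(\mathbb{F})$.
\item Because $p$ is not an identity of $\mathrm{M}_2(\mathbb{F})$, $L_p(\mathrm{M}_2(\mathbb{F}))$ is a nonzero Lie ideal of $\mathrm{M}_2(\mathbb{F})$ contained in $\mathfrak{sl}_2$.
\item For $\operatorname{char}\mathbb{F}\neq2$, $\mathfrak{sl}_2$ is simple and $[\mathfrak{gl}_2,\mathfrak{gl}_2]=\mathfrak{sl}_2$. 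So this nonzero Lie ideal is all of $\mathfrak{sl}_2$.
\end{itemize}
Then $[\mathfrak{sl}_2,\mathfrak{sl}_2]=\mathfrak{sl}_2\neq0$, so $q$ is not an identity of $\mathrm{M}_2(\mathbb{F})$. By \Cref{lem:partial-linearization}, both $(q)^T$ and $([x_1,x_2])^T$ are closed under partial linearizations.

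Now I apply \cite[Lemma~2.4]{BresarZelmanov2026} with $T_1=([x_1,x_2])^T$ and $T_2=(q)^T$. Here $T_1$ is a nonmatrix commutator $T$-ideal, since $[x_1,x_2]$ is nonzero on $\mathrm{M}_2(\mathbb{F})$, and $T_1(A)=K(A)=A$ by hypothesis. The lemma then gives $A=(q)^T(A)$, which is the ideal generated by $[L,L]$. \Cref{lem:lie-ideal-square} now yields $A=L_p(A)+L_p(A)^2$. The final assertion $A=\Alg(L_p(A))$ is immediate, since $L_p(A)+L_p(A)^2\subseteq\Alg(L_p(A))$.

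I expect the main obstacle to be the $\mathrm{M}_2$ nondegeneracy step: checking carefully that a nonzero Lie ideal of $\mathrm{M}_2(\mathbb{F})$ sitting inside $\mathfrak{sl}_2$ is all of $\mathfrak{sl}_2$ over an arbitrary field of characteristic $\neq2$, including characteristic $3$. My first attempt will be the direct route: take a nonzero traceless $X$ and bracket it with the matrix units $E_{ij}$ to produce $E_{12}$, $E_{21}$ and $E_{11}-E_{22}$. The other point needing care is matching the exact hypotheses of \cite[Lemma~2.4]{BresarZelmanov2026}.
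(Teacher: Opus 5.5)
Your proposal is correct and follows the paper's proof essentially step for step: the same auxiliary polynomial $q=[p(\mathbf{x}),p(\mathbf{y})]$, the same $\mathfrak{sl}_2$-simplicity argument showing that $(q)^T$ is a nonmatrix commutator $T$-ideal, then \cite[Lemma~2.4]{BresarZelmanov2026} applied to $([x_1,x_2])^T$ and $(q)^T$, and finally \Cref{lem:lie-ideal-square}. Your worry about characteristic $3$ is unfounded, since $\mathfrak{sl}_2(\mathbb{F})$ is simple whenever $\operatorname{char}\mathbb{F}\neq 2$, and the paper relies on this fact in exactly the same way.
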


    \begin{proof}
    Put $L=L_p(A).$
    By \Cref{lem:lie-values}, the multilinearity of $p$ implies that $L$
    is a Lie ideal of $A$. We claim that $[L,L]$ generates $A$ as a two-sided ideal, and then  \Cref{lem:lie-ideal-square} immediately implies the desired equalities.

    Let $x_1,\ldots,x_m,y_1,\ldots,y_m$ be pairwise distinct elements of $\mathcal X$, and write $\mathbf x=(x_1,\ldots,x_m)$, $\mathbf y=(y_1,\ldots,y_m)$.  Put $q(\mathbf x, \mathbf y)=[p(\mathbf{x}),p(\mathbf{y})]$.
    Multilinearity of $p$ and pairwise distinctness of the $x_i$ and $y_i$ imply that $q$ is multilinear.
    In addition, since the commutator is bilinear, it follows that $\operatorname{span}_{\mathbb{F}}q(A)=[L,L]$.

    The goal is to show $q$ is not identically zero on $\mathrm{M}_2(\mathbb{F})$. Put
    $R=\mathrm{M}_2(\mathbb{F}).$ Every Lie monomial of degree at least two is a commutator of two Lie monomials of positive degree. Consequently, every value of $p$ on $R$ lies in $[R,R]$.
    Therefore, $L_p(R)\subseteq [R,R] = \mathfrak{sl}_2(\mathbb{F}).$ The hypothesis that $p$ is not an identity of the ring $R$ now yields $L_p(R)\neq 0$.
    Moreover, \Cref{lem:lie-values} shows that $L_p(R)$ is a Lie ideal of $R$, and therefore of the simple Lie algebra $\mathfrak{sl}_2(\mathbb{F})$.  Consequently, $L_p(R)=\mathfrak{sl}_2(\mathbb{F})$, and since $\mathfrak{sl}_2(\mathbb{F})$ is nonabelian, it follows~that $$\operatorname{span}_{\mathbb{F}}q(R)=[L_p(R),L_p(R)]=\mathfrak{sl}_2(\mathbb{F})\neq0.$$ In particular, $q$ is not an identity of $\mathrm{M}_2(\mathbb{F})$.

    Take $z_1,z_2\in\mathcal X$ not occurring in $\mathbf x$ or $\mathbf y$, and put $c(z_1,z_2)=[z_1,z_2]$. Let $\theta$ be the endomorphism of $\mathbb{F}\langle\mathcal{X}\rangle$ defined by $\theta(z_1)=p(\mathbf{x})$, $\theta(z_2)=p(\mathbf{y})$, and $\theta(z)=z$ for the remaining free generators. Then $\theta(c)=q(\mathbf{x},\mathbf{y})$, so $q\in(c)^T$ and thus $(q)^T\subseteq(c)^T$. Since $q(\mathrm{M}_2(\mathbb{F}))\neq 0$, both $(q)^T$ and $(c)^T$ are nonmatrix commutator $T$-ideals. Moreover, $c$ and $q$ are multilinear, so by \Cref{lem:partial-linearization}, these $T$-ideals are closed under partial linearizations.

    Observe that $(c)^T(A)=K(A)=A.$ Applying \cite[Lemma~2.4]{BresarZelmanov2026} to the pair $(c)^T$ and $(q)^T$ gives $A=(q)^T(A)$. Recall from the beginning of this section that $(q)^T(A)$ is the two-sided ideal of $A$ generated by the evaluations of~$q$. Moreover, since $\operatorname{span}_{\mathbb{F}}q(A)=[L,L]$, it follows that $A$ coincides with the two-sided ideal generated by $[L,L]$, as claimed. The proof is now complete.
    \end{proof}

\begin{remark}\label{rem:lie-hypothesis}
The two hypotheses on $p$ in \Cref{prop:lie-polynomial-generation} play different roles and should not be conflated. Multilinearity makes $L_p(A)$ a Lie ideal, while $p$ being Lie gives $L_p(\mathrm{M}_2(\mathbb{F}))\subseteq\mathfrak{sl}_2(\mathbb{F})$. The proposition need not hold if the polynomial is not assumed to be Lie. For example, consider the polynomial
$$r=[x_1,x_2][x_3,x_4]+[x_3,x_4][x_1,x_2].$$
We verify directly, over every field of characteristic different from $2$, that $r$ is a nonmatrix, nonzero multilinear central polynomial of $\mathrm{M}_2(\mathbb{F})$. If $X,Y\in\mathrm{M}_2(\mathbb{F})$ have trace zero, then a direct computation yields $XY+YX=\operatorname{tr}(XY)I_2.$ Every commutator in $\mathrm{M}_2(\mathbb{F})$ has trace zero. Hence for $a_i\in\mathrm{M}_2(\mathbb{F})$, $1 \leq i\leq4$,
$$r(a_1,a_2,a_3,a_4)
=\operatorname{tr}\bigl([a_1,a_2][a_3,a_4]\bigr)I_2,$$
so every value of $r$ is central. On the other hand, let $E_{ij}$ denote the standard matrix units in $\mathrm{M}_2(\mathbb{F})$. Then $[E_{11},E_{12}]=E_{12}$ and $[E_{22},E_{21}]=E_{21}$, so
$$r(E_{11},E_{12},E_{22},E_{21})=E_{12}E_{21}+E_{21}E_{12}=I_2\neq 0.$$
Hence $r$ is nonmatrix with $L_r(\mathrm{M}_2(\mathbb{F}))=\mathbb{F}I_2$. It is not Lie, since any Lie polynomial of degree at least $2$ is $\mathfrak{sl}_2(\mathbb{F})$-valued on $\mathrm{M}_2(\mathbb{F})$, while $I_2\notin\mathfrak{sl}_2(\mathbb{F})$. Moreover, $K(\mathrm{M}_2(\mathbb{F}))=\mathrm{M}_2(\mathbb{F})$, as $\mathrm{M}_2(\mathbb{F})$ is simple as an associative algebra and has nonzero commutators. On the other hand, $$L_r(\mathrm{M}_2(\mathbb{F}))+L_r(\mathrm{M}_2(\mathbb{F}))^2=\mathbb{F}I_2\neq\mathrm{M}_2(\mathbb{F}),$$ so the conclusion of \Cref{prop:lie-polynomial-generation} does not extend to general non-Lie multilinear polynomials. 
\end{remark}

We now apply the preceding proposition to the Lie polynomial given by a double commutator.~As its values span $[[A,A],[A,A]]$, the resulting decomposition furnishes the connection required between the two hypotheses on $A$ discussed before.

\begin{proposition}\label{cor:double-generation}
Assume that $A$ is an unital algebra with $A=K(A)$. Put $G=[[A,A],[A,A]]$. Then $A=G+G^{2}$. In particular, $A=\Alg(G)$.
\end{proposition}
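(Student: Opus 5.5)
The plan is to apply \Cref{prop:lie-polynomial-generation} to the double commutator polynomial
$$p(x_1,x_2,x_3,x_4)=[[x_1,x_2],[x_3,x_4]].$$
This $p$ is a multilinear Lie polynomial in $m=4\ge 2$ variables. Its value space $L_p(A)$ is the span of all $[[a_1,a_2],[a_3,a_4]]$ with $a_i\in A$.

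By bilinearity of the commutator, this span coincides with $G=[[A,A],[A,A]]$. Indeed, every element of $[A,A]$ is a sum of commutators $[a_1,a_2]$, and $[\sum_i u_i,\sum_j v_j]=\sum_{i,j}[u_i,v_j]$. So it will suffice to verify the remaining hypothesis of \Cref{prop:lie-polynomial-generation}, namely that $p$ is not an identity of $\mathrm{M}_2(\mathbb{F})$.

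For this I would exhibit explicit matrix units. We have $[E_{11},E_{12}]=E_{12}$ and $[E_{22},E_{21}]=E_{21}$ (the same computation used in \Cref{rem:lie-hypothesis}). Hence
$$p(E_{11},E_{12},E_{22},E_{21})=[E_{12},E_{21}]=E_{11}-E_{22}\neq 0,$$
and this is nonzero in any characteristic. Alternatively, one can argue that $L_{[x_1,x_2]}(\mathrm{M}_2(\mathbb{F}))=\mathfrak{sl}_2(\mathbb{F})$ and that $\mathfrak{sl}_2(\mathbb{F})$ is nonabelian, but the explicit evaluation is cleaner.

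With the hypotheses checked, \Cref{prop:lie-polynomial-generation} gives $A=L_p(A)+L_p(A)^2=G+G^2$. Since $G+G^2\subseteq\Alg(G)\subseteq A$, we also get $A=\Alg(G)$.

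I expect no real obstacle here, since all the work is in \Cref{prop:lie-polynomial-generation}. The only points needing care are the identification $L_p(A)=G$ (the paper defines $[U,V]$ as a span, so this step is just bilinearity) and confirming that the four variables are distinct, so that $p$ is genuinely multilinear.
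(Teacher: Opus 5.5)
Your proposal is correct and matches the paper's proof: apply \Cref{prop:lie-polynomial-generation} to $p=[[x_1,x_2],[x_3,x_4]]$, identify $L_p(A)=G$, and check that $p$ is not an identity of $\mathrm{M}_2(\mathbb{F})$. The only difference is the witnessing evaluation. The paper uses $p(E_{12},E_{21},E_{11},E_{12})=2E_{12}$, which needs $\operatorname{char}\mathbb{F}\neq 2$, while your $p(E_{11},E_{12},E_{22},E_{21})=E_{11}-E_{22}$ is nonzero in every characteristic.
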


    \begin{proof}
    To apply \Cref{prop:lie-polynomial-generation}, put $p=[[x_1,x_2],[x_3,x_4]]$. This Lie polynomial has value set spanning $L_p(A)=[[A,A],[A,A]]=G$.
    It remains to show that $p$ is not an identity of $\mathrm{M}_2(\mathbb{F})$. Since $[E_{12},E_{21}]=E_{11}-E_{22}$ and $[E_{11},E_{12}]=E_{12}$, we have
$$p(E_{12},E_{21},E_{11},E_{12})=[E_{11}-E_{22},E_{12}]=2E_{12}\neq 0,$$
    so  $A=G+G^2$, and thus $A=\Alg(G)$.
    \end{proof}

The reverse implication needs no $T$-ideal argument because $A=K(A)$ is already a consequence of generation by double commutators in view of \Cref{lem:domain-structure}.~Hence a unital algebra is generated as a two-sided ideal by its commutators if and only if it is generated as an algebra by its double commutators.~Moreover, the stronger decomposition into a linear span obtained above remains valid.

\begin{corollary}\label{cor:unital-equivalence}
Let $A$ be a unital algebra and put $G=[[A,A],[A,A]]$.~Then the following statements are equivalent:
    \begin{enumerate}[label=\textup{(\arabic*)}]
    \item $A=K(A)$.
    \item $A=\Alg(G)$.
    \item $A=G+G^2$.
    \end{enumerate}
\end{corollary}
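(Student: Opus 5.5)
We prove the cycle of implications $(1)\Rightarrow(3)\Rightarrow(2)\Rightarrow(1)$, using only results already established in the paper.

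For $(1)\Rightarrow(3)$, assume $A$ is unital with $A=K(A)$. Then \Cref{cor:double-generation} applies directly and gives $A=G+G^2$. The substantive input is hidden there: \Cref{prop:lie-polynomial-generation} for the multilinear Lie polynomial $p=[[x_1,x_2],[x_3,x_4]]$, which is not an identity of $\mathrm{M}_2(\mathbb{F})$. That proposition in turn rests on the $T$-ideal comparison \cite[Lemma~2.4]{BresarZelmanov2026}, made applicable by \Cref{lem:partial-linearization}. Nothing new is required at this step. The one point to check is that $L_p(A)$ is exactly $G$. This holds because $G$ is spanned by the double commutators $[[a_1,a_2],[a_3,a_4]]$, by bilinearity of the bracket.

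For $(3)\Rightarrow(2)$, note that $G+G^2$ is contained in $\Alg(G)$ by the definition of the generated subalgebra. Hence $A=G+G^2\subseteq\Alg(G)\subseteq A$, and equality holds throughout.

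For $(2)\Rightarrow(1)$, we invoke \Cref{lem:domain-structure}(1). Under $A=\Alg([[A,A],[A,A]])$ it gives $A=K(A)$, via the chain $\Alg(G)\subseteq\Alg([A,A])\subseteq K(A)\subseteq A$. This direction does not use unitality.

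All of the difficulty has therefore been absorbed into \Cref{cor:double-generation}, specifically into the verification that the commutator $[L,L]$ of the Lie ideal $L=G$ generates $A$ as an ideal. Given that, the corollary is a bookkeeping step. The only thing to watch is that unitality is used in $(1)\Rightarrow(3)$, both through \Cref{lem:lie-ideal-square} and through the $T$-ideal comparison, and not elsewhere.
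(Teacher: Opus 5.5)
Your proposal is correct and follows the paper's proof essentially verbatim: the same cycle $(1)\Rightarrow(3)\Rightarrow(2)\Rightarrow(1)$. You take $(1)\Rightarrow(3)$ from \Cref{cor:double-generation}, treat $(3)\Rightarrow(2)$ as immediate, and obtain $(2)\Rightarrow(1)$ from \Cref{lem:domain-structure}(1); your remarks on $L_p(A)=G$ and on where unitality enters are also accurate.
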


    \begin{proof}
    The implication from \textup{(1)} to \textup{(3)} follows from \Cref{cor:double-generation}, the passage from \textup{(3)} to \textup{(2)} is immediate, and the implication from \textup{(2)} to \textup{(1)} follows from \Cref{lem:domain-structure}.
    \end{proof}

\begin{lemma}[{\cite[Proposition~4.3(b)]{BresarZelmanovSurvey2025}}]\label{lem:unit-range}
Let $A$ be unital with $1_A$ and let $\varphi\colon A\to B$ be a Jordan homomorphism. Then $\varphi(1_A)$ is the unit of $\Alg(\varphi(A))$.
\end{lemma}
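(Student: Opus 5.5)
We need to prove that $\varphi(1_A)$ is the unit of $\Alg(\varphi(A))$. Put $e=\varphi(1_A)$. Since $\Alg(\varphi(A))$ is spanned by words in $\varphi(A)$, it suffices to show $e\varphi(x)=\varphi(x)e=\varphi(x)$ for every $x\in A$. Indeed, a word $\varphi(x_1)\cdots\varphi(x_n)$ is then fixed by left multiplication by $e$, because $e\varphi(x_1)=\varphi(x_1)$, and by right multiplication by $e$, because $\varphi(x_n)e=\varphi(x_n)$. No hypothesis such as $A=K(A)$ is needed; only the Jordan identity and $\operatorname{char}\mathbb{F}\neq 2$ enter.

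Start with $e^2=e$. We have $1_A\circ 1_A=2\cdot 1_A$, so $2e=\varphi(1_A\circ 1_A)=e\circ e=2e^2$, and we may divide by $2$.

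Next, fix $x\in A$ and set $X=\varphi(x)$. Since $1_A\circ x=2x$, we get $eX+Xe=2X$. Multiplying this relation by $e$ on the left and using $e^2=e$ gives $eX+eXe=2eX$, so $eXe=eX$. Multiplying it by $e$ on the right gives $eXe+Xe=2Xe$, so $eXe=Xe$. Hence $eX=Xe$, and substituting back into $eX+Xe=2X$ yields $2eX=2X$. Therefore $eX=Xe=X$.

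The only delicate point is this Peirce-type argument: the Jordan relation alone gives only $eX+Xe=2X$, and the trick is to sandwich it with the idempotent $e$ on each side to extract $eXe$. Once that is done, the extension from $\varphi(A)$ to all of $\Alg(\varphi(A))$ is immediate. Note finally that $e\in\varphi(A)\subseteq\Alg(\varphi(A))$, so $e$ is genuinely an element of the algebra and is its identity $1_{\Alg(\varphi(A))}$.
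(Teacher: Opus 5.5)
Your proof is correct and complete. The idempotence $e^2=e$ follows from $1_A\circ 1_A=2\cdot 1_A$, and multiplying $eX+Xe=2X$ by $e$ on the left and on the right gives $eX=eXe=Xe$, hence $eX=Xe=X$, which extends to all words in $\varphi(A)$. The paper gives no argument of its own here and only cites Proposition~4.3(b) of the Bre\v{s}ar--Zelmanov survey, so your computation serves as a short self-contained proof of the cited fact.
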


With the necessary preparations complete, we now prove \thmCref, stated in the Introduction as the main application.

\begin{proof}[Proof of \thmCref]
It follows from \Cref{lem:unit-range}  that $\Alg(\varphi(A))$ is unital with identity~$1$, and  hence $\Ann(\Alg(\varphi(A)))=0$. Moreover, by \Cref{cor:unital-equivalence}, we also have $A=\Alg([[A,A],[A,A]])$,  so \thmAref gives $V_{\varphi}\cap W_{\varphi}=0$. By \Cref{prop:standard-mod-Q}, $\varphi$ is splittable and standard.
Applying \thmAref again, we now obtain the direct sum decomposition 
$\Alg(\varphi(A))=V_{\varphi}\oplus W_{\varphi}$. Thus $1=e+f$ with $e\in V_{\varphi}$, $f\in W_{\varphi}$, $ef=fe=0$, so $e,f$ are idempotents. Writing $c=v+w$ with $v\in V_{\varphi}$ and $w\in W_{\varphi}$, we get $fc=w=cf$, so $f$ is central in $\Alg(\varphi(A))$.  \Cref{prop:standard-mod-Q} gives the decomposition $\varphi=\varphi_{+}+\varphi_{-}$, where $\varphi_{+},\varphi_{-}\colon A\to B$ are respectively the homomorphic and antihomomorphic parts valued in $W_{\varphi}$ and $V_{\varphi}$. Put $\varepsilon=f$. Then $\varepsilon\varphi=\varphi_{+}$ and $(1-\varepsilon)\varphi=\varphi_{-}$, which gives the desired (anti)multiplicative properties.

It remains to prove the uniqueness of $\varepsilon$.
Suppose that $\varepsilon'\in Z(\Alg(\varphi(A)))$ is
another central idempotent with the stated
properties. Because $\varepsilon'\varphi$ is a homomorphism and $(1-\varepsilon')\varphi$ is an antihomomorphism, we have $\varepsilon' a(x,y)=0$ and $(1-\varepsilon')b(x,y)=0$ for all $x,y\in A$. Centrality of $\varepsilon'$ then yields $\varepsilon' V_{\varphi}=0$ and $(1-\varepsilon')W_{\varphi}=0$, so $\varepsilon' e=0$ and $\varepsilon'\varepsilon=\varepsilon$. Consequently,
$\varepsilon' = \varepsilon'(e+\varepsilon) = \varepsilon,$
and uniqueness follows.
\end{proof}

Thus, \thmCref holds without the surjectivity hypothesis that was left open after \cite[Theorem~2.10]{BresarZelmanov2026}.

\end{document}